\newtheorem{theorem}{Theorem}
\newtheorem{lemma}[theorem]{Lemma}
\newtheorem{remark}[theorem]{Remark}
\newtheorem{proposition}[theorem]{Proposition}
\newtheorem{corollary}[theorem]{Corollary}
\numberwithin{equation}{section}
\numberwithin{theorem}{section}
\providecommand{\keywords}[1]
{
  \small	
  \textbf{Keywords } #1
}
\providecommand{\thank}[1]
{
  \small	
  \textbf{Acknowledgment } #1
}
\title{\textbf{Scattering of the defocusing Calogero--Moser derivative nonlinear Schrödinger equation}}
\date{}
\author{Xi Chen\thanks{Department of Mathematics and Computer Science, University of Basel. \\ \hspace*{5.5mm}  Email: \texttt{xi01.chen@unibas.ch}}}
\begin{document}

\maketitle
\begin{abstract}
We consider the defocusing Calogero–Moser derivative nonlinear Schrödinger equation (CM-DNLS)
\begin{align*}
i \partial_{t} u+\partial_{x}^2 u-2\Pi D\left(|u|^{2}\right)u=0, \quad (t,x ) \in \mathbb{R} \times \mathbb{R}.
\end{align*}
Using the Gérard-type explicit formula, we prove the scattering result of solutions to this equation with initial data in $L_{+}^2(\mathbb{R}):=\left\{f \in L^2(\mathbb{R}): \operatorname{supp}(\widehat{f}) \subset[0,+\infty)\right\}$. We also characterize the scattering term using the distorted Fourier transform associated with the Lax operator. This is one of the first works that apply the Gérard-type explicit formula to study the long-time behavior of an integrable equation for a broad class of initial data, beyond the previously studied rational cases.
\end{abstract}
\keywords{Calogero–Moser derivative nonlinear Schrödinger equation, Gérard-type explicit formula, Scattering, Distorted Fourier transform}\\

\tableofcontents{}
\section{Introduction}
We consider the following defocusing Calogero--Moser derivative nonlinear Schrödinger equation (CM-DNLS)
\begin{equation}
\label{3.1}
i \partial_t u+\partial_x^2 u-2 \Pi D\left(|u|^2\right) u=0, \quad(t, x) \in \mathbb{R} \times \mathbb{R}.
\end{equation}
Here $\Pi$ denote the Riesz-Szeg\H{o} orthogonal projector from $L^2(\mathbb{R})$ onto 
\begin{align*}
L_{+}^2(\mathbb{R}):=\left\{f \in L^2(\mathbb{R}): \operatorname{supp}(\widehat{f}) \subset[0,+\infty)\right\},
\end{align*}
and it is given by
\begin{equation}
\widehat{\Pi(f)}(\xi):= 1_{\xi>0} \widehat{f}(\xi), \quad \forall f \in L^2(\mathbb{R}).
\end{equation}
The defocusing Calogero–Moser DNLS equation was first introduced in \cite{60}, where it was called the intermediate nonlinear Schrödinger equation. Later, the focusing Calogero–Moser derivative nonlinear Schrödinger equation appeared in \cite{22}, where it was obtained as a formal continuum limit of the classical Calogero–Moser systems \cite{37,38}. In \cite{5}, Gérard--Lenzmann studied the following focusing (CM-DNLS),
\begin{equation}
\label{1.5}
i \partial_t u+\partial_x^2 u+2\Pi D\left(|u|^2\right) u=0,\quad(t, x) \in \mathbb{R} \times \mathbb{R}.
\end{equation}
They proved that, for $u \in H_{+}^s(\mathbb{R}):=H^s(\mathbb{R})\cap L_{+}^2(\mathbb{R})$ with $s$ sufficiently large, equation \eqref{1.5} admits a Lax pair structure and possesses infinitely many conservation laws. Relying on these conservation laws, they established the global well-posedness of \eqref{1.5} in $H_{+}^s(\mathbb{R})$ for all $s>\tfrac12$, under the small-mass assumption $\|u_0\|_{L^2}^2<2\pi$. Subsequently, Killip--Laurens--Vi\c{s}an combined the explicit formulae for \eqref{3.1} and \eqref{1.5} with methods based on commuting flows to prove global well-posedness for both equations in $H_{+}^s(\mathbb{R})$, $s\geq 0$, again under the small-mass assumption in the focusing case \cite{2}. The author proved global well-posedness for \eqref{3.1} under a non-vanishing condition at infinity and also derived G\'erard-type explicit formulae for the corresponding solutions \cite{70}. Beyond the small-mass regime, Hogan--Kowalski showed that, for every $\varepsilon>0$, there exist initial data $u_0\in H_{+}^\infty$ with mass $2\pi+\varepsilon$ such that the corresponding maximal-lifespan solution
$u:(T_-,T_+)\times\mathbb{R}\to\mathbb{C}$
satisfies
$\lim_{t\to T_\pm}\|u(t)\|_{H^s}=\infty$
for every $s>0$ \cite{9}. Kim--Kim--Kwon then constructed finite-time blow-up solutions in $\mathcal{S}_{+}(\mathbb{R}):=\mathcal{S}(\mathbb{R})\cap L_{+}^2(\mathbb{R})$ for \eqref{1.5}, with mass arbitrarily close to $2\pi$ \cite{10}. Quantized blow-up dynamics for the focusing (CM-DNLS) equation \eqref{1.5} were studied in \cite{56}, while a sharp classification of finite-time single-bubble blow-up dynamics for \eqref{1.5} was obtained in \cite{59}. Most recently, a new class of finite-time blow-up solutions for \eqref{1.5} were identified in \cite{67}.\\\\
Concerning inverse scattering methods, Matsuno studied in \cite{54} the Cauchy problem for the defocusing (CM-DNLS) equation \eqref{3.1} with a non-zero constant condition at infinity, developing an inverse scattering transform formulation. Frank--Read proposed in \cite{78} an inverse scattering transform for the focusing (CM-DNLS) equation \eqref{1.5} based on the distorted Fourier transform.\\\\
Furthermore, Badreddine characterized the zero-dispersion limit of solutions to \eqref{3.1} and \eqref{1.5} with initial data
$u_0\in L_{+}^2(\mathbb{R})\cap L^\infty(\mathbb{R})$
by means of G\'erard-type explicit formulae, and identified the limit in terms of the branches of the multivalued solution to the inviscid Burgers--Hopf equation \cite{13}. She also established the global well-posedness of \eqref{3.1} and \eqref{1.5} on the circle in $H_{+}^s(\mathbb{T})$ for every $s\geq 0$, under the small-mass assumption in the focusing case \cite{11}. In addition, she classified the traveling wave solutions and finite-gap potentials of \eqref{3.1} and \eqref{1.5} on the circle \cite{12}. We also mention that new numerical schemes based on explicit formulae were recently developed for \eqref{3.1} and \eqref{1.5} on the circle \cite{90,89}. Most recently, finite-time blow-up solutions for \eqref{1.5} on the circle were constructed in \cite{66}. Further studies on the intermediate nonlinear Schr\"odinger equation can be found in \cite{52,60,53,55,42,57}.\\\\
In \cite{23}, Kim--Kwon established soliton resolution results for both finite-time blow-up solutions and global-in-time solutions to \eqref{1.5}. More precisely, they proved scattering for solutions to the focusing (CM-DNLS) equation \eqref{1.5} in
\[
H^{1,1}(\mathbb{R}):=\{u\in H^1(\mathbb{R}): xu\in L^2(\mathbb{R})\}
\]
with small-mass initial data, namely $\|u_0\|_{L^2}^2<2\pi$, and established soliton resolution for global solutions to the focusing (CM-DNLS) equation \eqref{1.5} in $H^{1,1}(\mathbb{R})$. Following their approach, it is natural to expect that solutions to the defocusing (CM-DNLS) equation \eqref{3.1} in $H^{1,1}(\mathbb{R})$ also scatter as time tends to infinity.\\\\
More recently, Gassot--G\'erard--Miller used the explicit formula to prove soliton resolution for the Benjamin--Ono equation on the line for a broad class of initial data \cite{0}. Moreover, Gassot--G\'erard used the explicit formula to prove soliton resolution for infinite-order multisolitons of the Benjamin--Ono equation on the line \cite{0.1}.\\\\
In this paper, we apply the G\'erard-type explicit formula to prove a scattering result for solutions to the defocusing (CM-DNLS) equation \eqref{3.1} with initial data in $L_+^2(\mathbb{R})$. This appears to be one of the first applications of the G\'erard-type explicit formula to the long-time behavior of an integrable equation for a broad class of initial data beyond the previously studied rational cases. Our analysis also relies on the distorted Fourier transform associated with the Lax operator $L_{u_0}$, which allows us to characterize the scattering term explicitly in spectral variables.\\\\
Although our approach also uses the explicit formula to study the long-time behavior of the equation, it differs in an essential way from the direct analysis of the explicit formula carried out in \cite{0}. More precisely, we first use the distorted Fourier transform to characterize the explicit formula and thereby obtain a distorted representation of the defect term, then we analyze this distorted representation directly by means of oscillatory integral estimates, which leads to the desired scattering result. Compared to the approach in \cite{0}, the primary advantage of our method for \eqref{3.1} is its applicability to a wider range of singular initial data. Specifically, studying the long-time behavior of \eqref{3.1} using the technique from \cite{0} might require the additional assumption that $u_0 \in L^1(\mathbb R)$. Since the equation \eqref{3.1} is $L^2$-critical and explicit formulas are currently available only for chiral solutions, the scattering result obtained for \eqref{3.1} in this paper is the optimal result that can be anticipated under current explicit formula frameworks.

\subsection{The Lax pair}
\label{subsection 2.3}
In this paper, we denote by $L_{+}^{2}(\mathbb{R})$ the Hardy space corresponding to $L^2(\mathbb{R})$ functions having a Fourier transform supported in the domain $\xi \geq 0$. Also, we denote by $L_{+}^{p}(\mathbb{R})$($1\leq p \leq \infty$) the space corresponding to $L^p(\mathbb{R})$ functions having a Fourier transform supported in the domain $\xi \geq 0$. Recall that the space $L_{+}^{2}(\mathbb{R})$ identifies to holomorphic functions on the upper-half plane $\mathbb{C}_{+}:=\{z \in \mathbb{C}: \operatorname{Im}(z)>0\}$ such that
\begin{align*}
\sup _{y>0} \int_{\mathbb{R}}|f(x+i y)|^{2} d x<+\infty.
\end{align*}
The Riesz-Szeg\H{o} projector $\Pi$ is the orthogonal projector from $L^2(\mathbb{R})$ onto $L_{+}^2(\mathbb{R})$. It is given by
\begin{equation}
\label{1.10}
\forall f \in L^2(\mathbb{R}), \quad \forall z \in \mathbb{C}_{+}, \quad \Pi f (z) = \frac{1}{2 \pi} \int_{0}^{\infty} \mathrm{e}^{i z \xi} \widehat{f}(\xi) d \xi=   \frac{1}{2i\pi} \int_{\mathbb{R}} \frac{f(y)}{y-z} dy.
\end{equation}
The Toeplitz operator $T_b$ on $L^{2}(\mathbb{R})$ associated to a function $b \in L^{\infty}(\mathbb{R})$ is defined by
\begin{align*}
T_{b} f : =\Pi(b f), \quad \forall f \in L_{+}^2(\mathbb{R}).
\end{align*}
For $u \in H_{+}^1(\mathbb{R})$, the Lax operator $L_u$ corresponding to \eqref{3.1} is defined by
\begin{align*}
\forall f \in \operatorname{Dom}\left(L_{u}\right)=H_{+}^{1}(\mathbb{R}), \quad  L_{u} f :=D f+uT_{\bar{u}} f \text{ with } D:=\frac{1}{i} \frac{d}{d x}.
\end{align*}
For $u \in L_{+}^2(\mathbb{R})$, the Lax operator $L_u$ can be defined via an approach using the following quadratic form (see also \cite[Appendix A]{5}): 
\[
\mathcal{Q}_u(f,g)=\left\langle Df, g \right\rangle + \left\langle {T_{\overline u}f}, {T_{\overline u}g} \right\rangle.
\]
Then we can define the Lax operator $L_u$ for $u\in L_{+}^2(\mathbb{R})$:
\[
\left\langle L_u(f), g\right\rangle :=\mathcal{Q}_u(f, g) \quad \text { for } f \in \operatorname{dom}\left(L_u\right) \text { and } g \in H_{+}^{1 / 2}(\mathbb{R}) .
\]
with
\[
\operatorname{dom}\left(L_u\right)=\left\{f \in H_{+}^{1 / 2}(\mathbb{R}): \exists C>0 \text { s.t. }\left|\mathcal{Q}_u(f, g)\right| \leq C\|g\|_{L^2} \text { for } g \in H_{+}^{1 / 2}(\mathbb{R})\right\}.
\]
For $u \in H_{+}^2(\mathbb{R})$, another operator in the Lax pair, denoted by $B_u$, is defined by
\begin{align*}
B_u:=-u T_{\partial_x \bar{u}}+\partial_x u T_{\bar{u}}+i\left(u T_{\bar{u}}\right)^2.
\end{align*}
Also, we recall the definition of $X^{*}$ in \cite{1},
\begin{equation}
\label{1.51}
\forall f \in \operatorname{Dom}\left(X^{*}\right): =\left\{f \in L_{+}^2(\mathbb{R}): \exists \lambda_f \in \mathbb{C}: x f+\lambda_f \in L^2(\mathbb{R})\right\}= \left\{f \in L_{+}^{2}(\mathbb{R}):  \widehat{f} \in H^{1}(0, \infty)\right\}, 
\end{equation}
with
\[\widehat{X^{*} f}(\xi) :=i \frac{d}{d \xi}[\widehat{f}(\xi)] \mathbf{1}_{\xi>0}.\]
Here $X^{*}$ is the adjoint of the operator of multiplication by $x$ on $L_{+}^2(\mathbb{R})$, and $\left(\operatorname{Dom}\left(X^{*}\right), -iX^{*}\right)$ is maximally dissipative. Notice that $-iX^{*}$ is the infinitesimal generator of the adjoint semi–group of contractions $\left(S(\eta)^{*}\right)_{\eta \geq 0}$ with
\begin{align*}
\forall \eta \geq 0, \quad S(\eta)^{*}=\mathrm{e}^{-i \eta X^{*}}.
\end{align*}
We also notice that 
\begin{align*}
\forall f \in \operatorname{Dom}\left(X^{*}\right), \left|\widehat{f}\left(0_{+}\right)\right|^2 = -4\pi\operatorname{Im}\left\langle X^{*} f , f\right\rangle_{L^2(\mathbb{R})} \leq 4\pi \|X^{*}f\|_{L^2} \|f\|_{L^2}.
\end{align*}
Therefore, we can define
\begin{equation}
\label{1.121}
\forall f \in \operatorname{Dom}\left(X^{*}\right), \quad I_{+}(f):=\widehat{f}\left(0_{+}\right).
\end{equation}
Moreover, we can easily deduce that
\begin{equation}
\label{1.9}
\forall f \in \operatorname{Dom}\left(X^*\right), \quad X^* f=x f+\frac{1}{2 i \pi} I_{+}(f).
\end{equation}

\subsection{Global well-posedness}
We have the following global-well-posedness result for the defocusing (CM-DNLS) \eqref{3.1}.
\begin{theorem}[\cite{5,2}]
Given $u_0 \in H_{+}^s(\mathbb{R})$ with $s\geq 0$, there exists a solution $u \in C(\mathbb{R}; H_{+}^s(\mathbb{R}))$ to the defocusing (CM-DNLS) \eqref{3.1} with $u(0,x) = u_0(x)$.
\end{theorem}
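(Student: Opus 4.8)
The plan is to prove the statement first for smooth initial data, where the Lax‑pair structure of \cite{5} applies classically, and then to descend to $s\ge 0$ by means of a Gérard‑type explicit formula (this is the route of \cite{2}).

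\textbf{Smooth data, Lax pair, and global a priori bounds.} For $u_0\in H^s_+(\mathbb R)$ with $s$ large (say $s>\tfrac32$) a local-in-time solution $u\in C((-T,T);H^s_+)$ is produced by a quasilinear energy argument: one differentiates \eqref{3.1} $s$ times, uses the algebra property of $H^s$, the $L^2$-boundedness of $\Pi$, and the skew-adjoint structure to absorb the derivative landing on $\Pi(|u|^2)$, and closes a Grönwall inequality for $\|u(t)\|_{H^s}$; uniqueness at this regularity follows in the same way. On the lifespan of such a solution one verifies the Lax identity $\frac{d}{dt}L_{u(t)}=[B_{u(t)},L_{u(t)}]$, so that $L_{u(t)}$ is unitarily conjugate to $L_{u_0}$ for every $t$. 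In the defocusing case $L_u=D+uT_{\bar u}\ge D\ge 0$, since $\langle uT_{\bar u}f,f\rangle_{L^2}=\|T_{\bar u}f\|_{L^2}^2\ge 0$ and $D\ge 0$ on $L^2_+$; hence all spectral functionals of $L_{u(t)}$, in particular the trace-type quantities of the Gérard–Lenzmann hierarchy, are conserved, and they control $\|u(t)\|_{H^n}$ for every integer $n$ (and then for all $s\ge 0$ by interpolation). Because no mass threshold intervenes in the defocusing regime, these bounds are global, so the local smooth solution extends to $u\in C(\mathbb R;H^s_+)$ for every smooth $u_0$.

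\textbf{Descent to $s\ge 0$ via the explicit formula.} For smooth $u_0$ the solution admits a Gérard-type representation, schematically
\[
u(t,z)=\frac{1}{2i\pi}\,I_+\!\left(\big(X^*+2tL_{u_0}-z\big)^{-1}u_0\right),\qquad z\in\mathbb C_+,
\]
(see \cite{70} for the precise statement), whose right-hand side continues to make sense for arbitrary $u_0\in L^2_+$: the operator $L_{u_0}$ is realized through its quadratic form with $\operatorname{Dom}(L_{u_0})\subset H^{1/2}_+$, and the pencil $X^*+2tL_{u_0}-z$ is invertible on $\operatorname{Dom}(X^*)$ because, by the Miyadera–Gustafson theorem (Theorem \ref{theorem 1.51}), $-i(X^*+2tL_{u_0})$ is still maximally dissipative, so its resolvent is bounded by $(\operatorname{Im} z)^{-1}$. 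One then checks that $t\mapsto u(t)$ defined by this formula lies in $C(\mathbb R;L^2_+)$, that the map $u_0\mapsto u$ is continuous from $L^2_+$ into $C([-T,T];L^2_+)$, and that for smooth $u_0$ it reproduces the solution constructed above. Approximating a general $u_0\in H^s_+$ by smooth data and passing to the limit in the formula then yields the asserted solution; persistence of $H^s$-regularity for $s>0$ follows either by re-running the argument with the $s$-th conserved quantity or, as in \cite{2}, by a commuting-flows approximation.

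\textbf{Main obstacle.} The substantive difficulty is the low-regularity step. Since the nonlinearity $2\Pi D(|u|^2)u$ is of derivative type, no contraction-mapping argument survives at $s=0$, and one must exploit the integrable structure in full. Making the explicit formula rigorous for rough data — justifying the form-domain definition of $L_{u_0}$, the holomorphy in $z$ and smoothness in $t$ of the resolvent of the pencil, the $L^2_+$-continuity of $t\mapsto u(t)$, and the identification with the PDE solution in the smooth-data limit — is where essentially all the work lies; the remaining steps are comparatively routine.
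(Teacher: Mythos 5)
Your outline is essentially the argument of the works the paper cites for this theorem: the paper itself gives no proof, quoting the result directly from Gérard--Lenzmann \cite{5} (high-regularity theory via the Lax pair and coercive conservation laws, with no mass threshold in the defocusing case) and Killip--Laurens--Vişan \cite{2} (descent to $s\ge 0$ via the explicit formula and commuting-flow approximation). Your sketch reproduces that route faithfully, including the correct identification of where the real work lies at low regularity, so there is nothing to compare beyond noting the agreement.
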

\begin{remark}
We have also the analogous global well-posedness result in $H_{+}^s(\mathbb{R}))$ ($s\geq 0$) for the focusing (CM-DNLS) \eqref{1.5} with small mass initial data ($\|u_0\|_{L^2}^2 < 2\pi$, or $u_0 \in H_{+}^1(\mathbb{R})$ with $\|u_0\|_{L^2}^2 = 2\pi$) \cite{5,2}. For the focusing (CM-DNLS) \eqref{1.5} with large mass initial data ($\|u\|_{L^2}^2 > 2\pi$), the finite-time blow up phenomena would occur \cite{10,56,59,67}.
\end{remark}
\subsection{Gérard-type explicit formula}
In \cite{2}, Killip--Laurens--Vişan followed Gérard's approach used in \cite{1} to derive an explicit formula for the solution to \eqref{3.1} in $L_{+}^2(\mathbb{R})$.
\begin{theorem}(\cite[Theorem 1.7]{2})
\label{theorem 1.3}
Let $u \in C\left(\mathbb{R}, L_{+}^2(\mathbb{R})\right)$ be the solution to \eqref{3.1} with $u(0)=u_0$. Then for any $t \in \mathbb{R}$,
\begin{equation}
\label{1.8}
\forall z \in \mathbb{C}_{+}: = \{z \in \mathbb{C}: \,\Im(z)>0\}, \quad  u(t, z)=\frac{1}{2 i \pi} I_{+}\left[\left(X^{*}+2 t L_{u_0}-z \mathrm{Id}\right)^{-1} u_0\right] .
\end{equation}
\end{theorem}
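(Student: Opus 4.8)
The plan is to follow Gérard's commuting-flow strategy: linearize the nonlinear flow by an explicit unitary group, then conjugate the generator $X^{*}$ of frequency translations along it. \emph{Step 1 (linearization).} By the Lax pair structure of \eqref{3.1} from \cite{5}, along a solution $u$ one has $\partial_t L_{u(t)} = [B_{u(t)}, L_{u(t)}]$ and, upon rewriting \eqref{3.1}, $\partial_t u = B_u u - i L_u^2 u$. Let $U(t)$ be the propagator $\partial_t U = B_{u(t)} U$, $U(0) = \mathrm{Id}$, which is unitary since $B_u^{*} = -B_u$; then $L_{u(t)} = U(t) L_{u_0} U(t)^{*}$. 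Differentiating $v(t) := U(t)^{*} u(t)$ and using this conjugation, the $B_u$-terms cancel and $\partial_t v = -iL_{u_0}^2 v$, whence
\begin{align*}
u(t) = W(t)\, u_0, \qquad W(t) := U(t)\, e^{-i t L_{u_0}^2},
\end{align*}
a unitary group on $L_{+}^2(\mathbb{R})$ with skew-adjoint generator $\mathcal{C}(t) := B_{u(t)} - i L_{u(t)}^2$. I also record the $t=0$ instance of the formula: for every $f \in L_{+}^2(\mathbb{R})$ and $z \in \mathbb{C}_{+}$,
\begin{align*}
f(z) = \tfrac{1}{2i\pi}\, I_{+}[(X^{*} - z\,\mathrm{Id})^{-1} f],
\end{align*}
obtained by solving the transport ODE $i\tfrac{d}{d\xi}\widehat{g} - z\widehat{g} = \widehat{f}$ on $(0,\infty)$ for $g = (X^{*}-z)^{-1}f$ with $\widehat{g}\in H^1(0,\infty)$: one gets $\widehat{g}(\xi) = i e^{-iz\xi}\int_{\xi}^{\infty} e^{izs}\widehat{f}(s)\,ds$, so $\tfrac{1}{2i\pi}\widehat{g}(0_{+}) = \tfrac{1}{2\pi}\int_0^{\infty} e^{izs}\widehat{f}(s)\,ds = f(z)$.

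\emph{Step 2 (key commutator).} The algebraic heart of the argument is the operator identity
\begin{align*}
[\mathcal{C}(t), X^{*}] = -2\, L_{u(t)} \qquad \text{on } \operatorname{Dom}(X^{*}).
\end{align*}
Using $X^{*} = x + \tfrac{1}{2i\pi}I_{+}$ on $\operatorname{Dom}(X^{*})$, one has $[D,X^{*}] = -i\,\mathrm{Id}$; and since $\Pi(\overline u) = 0$, $I_{+}$ annihilates products of two functions with Fourier support in $[0,\infty)$, and $[\Pi,x]h = -\tfrac{i}{2\pi}\widehat{h}(0_{+})$, a short computation gives $[L_u, X^{*}] = -i(\mathrm{Id} + \tfrac{1}{2\pi}\langle\cdot,u\rangle u)$, hence $-i[L_u^2, X^{*}] = -2L_u - \tfrac{1}{2\pi}(\langle\cdot,u\rangle L_u u + \langle\cdot, L_u u\rangle u)$. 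A parallel but longer computation of $[B_u, X^{*}]$ — expanding $B_u = -uT_{\partial_x\overline u} + \partial_x u\, T_{\overline u} + i(uT_{\overline u})^2$ and collecting the $[\Pi,x]$-boundary corrections from each term — produces exactly the rank-$\le 2$ operator $\tfrac{1}{2\pi}(\langle\cdot,u\rangle L_u u + \langle\cdot, L_u u\rangle u)$, so the corrections cancel and the identity follows. (At $u\equiv 0$ it reduces to $[-iD^2, X^{*}] = -2D$.)

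\emph{Step 3 (conjugation and conclusion).} From Step 2 and $L_{u(t)} = W(t)L_{u_0}W(t)^{*}$,
\begin{align*}
\frac{d}{dt}\!\left(W(t)^{*}X^{*}W(t)\right) = -W(t)^{*}[\mathcal{C}(t),X^{*}]W(t) = 2\,W(t)^{*}L_{u(t)}W(t) = 2\,L_{u_0},
\end{align*}
so $W(t)^{*}X^{*}W(t) = X^{*} + 2tL_{u_0}$. In particular $-i(X^{*}+2tL_{u_0})$ is maximally dissipative, being unitarily conjugate to the maximally dissipative $-iX^{*}$, so $(X^{*}+2tL_{u_0}-z\,\mathrm{Id})^{-1} = W(t)^{*}(X^{*}-z)^{-1}W(t)$ is a bounded operator for every $z\in\mathbb{C}_{+}$. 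Applying the $t=0$ formula of Step 1 to $f = u(t) = W(t)u_0$ and moving $W(t)$ through the resolvent,
\begin{align*}
u(t,z) &= \tfrac{1}{2i\pi}\, I_{+}\big[(X^{*}-z)^{-1}W(t)u_0\big] \\
&= \tfrac{1}{2i\pi}\, I_{+}\big[W(t)(X^{*}+2tL_{u_0}-z)^{-1}u_0\big] = \tfrac{1}{2i\pi}\, I_{+}\big[(X^{*}+2tL_{u_0}-z)^{-1}u_0\big],
\end{align*}
the last step using $I_{+}\circ W(t) = I_{+}$. This holds because $\tfrac{d}{dt}I_{+}(W(t)g) = I_{+}(\mathcal{C}(t)W(t)g) = 0$: a direct check shows $I_{+}(\mathcal{C}(t)h)=0$, using $\widehat{Dh}(0_{+})=0$ and that the Fourier transform of a product of two functions in $L_{+}^2(\mathbb{R})$ vanishes at the origin. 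For Schwartz data every manipulation above is rigorous; the case of general $u_0 \in L_{+}^2(\mathbb{R})$ follows by density, using the quadratic-form definition of $L_{u_0}$ and the continuity of both sides of \eqref{1.8} in $u_0$.

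\emph{Main obstacle.} The decisive point is the commutator identity $[B_u, X^{*}] = \tfrac{1}{2\pi}(\langle\cdot,u\rangle L_u u + \langle\cdot,L_u u\rangle u)$ of Step 2: $B_u$ is a sum of three Toeplitz-type operators, and each commutator with multiplication by $x$ spawns a boundary correction through $[\Pi,x]$; checking that these reassemble into precisely the rank-$\le 2$ operator that cancels $-i[L_u^2,X^{*}]$ is the core computation, and the equality then forces the clean form of the explicit formula. A secondary, more technical issue is the low-regularity bookkeeping — defining $L_{u_0}$ and $B_u$ by forms, verifying that $W(t)$ preserves $\operatorname{Dom}(X^{*})$ so that $I_{+}$ is legitimately applied, and passing to the limit from Schwartz data.
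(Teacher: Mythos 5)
This theorem is not proved in the paper: it is quoted verbatim from Killip–Laurens–Vişan \cite{2} (their Theorem 1.7), whose proof is precisely the Gérard-style commuting-flow argument you reconstruct — linearization $u(t)=U(t)\mathrm{e}^{-itL_{u_0}^2}u_0$ via the Lax pair, the commutator identity $[B_u - iL_u^2, X^{*}]=-2L_u$, conjugation giving $W(t)^{*}X^{*}W(t)=X^{*}+2tL_{u_0}$, and the invariance $I_{+}\circ W(t)=I_{+}$, followed by a density/well-posedness limit from smooth data. Your sketch is essentially that same proof, and the key algebraic identities you assert (including $[L_u,X^{*}]=-i(\mathrm{Id}+\tfrac{1}{2\pi}\langle\cdot,u\rangle u)$ and the rank-two form of $[B_u,X^{*}]$) check out with the paper's conventions, so modulo the low-regularity bookkeeping you acknowledge, the proposal is correct.
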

\noindent The Gérard-type explicit formula was first founded by Gérard in \cite{1} for the Benjamin--Ono equation on the line (see also \cite{27} for the extension), and then by several mathematicians for the cubic Szeg\H{o} equation on the line \cite{80}, the Calogero--Moser DNLS equation \cite{2,70}, and the half wave maps equation \cite{30,30.1}.  The Gérard-type explicit formula has also been used in several studies \cite{80,20,2,27,13,29,25,30,90,0,89,30.1,0.01,20.1,66,0.1,67,4.44,14.1} so far and has shown to be very effective. \\\\
In fact, following the argument used in \cite[Lemma 3.1]{27}, we can give more information on the structure of formula \eqref{1.8}.
\begin{lemma}
\label{lemma:lax-resolvent-hardy-bound}
Let $u_0\in H_{+}^2(\mathbb{R})$. For
$v\in L^2_+(\mathbb R)$ and 
$t\in\mathbb R$, define, for \(z\in\mathbb C_+\),
\begin{equation}
\label{eq:lax-resolvent-hardy-function}
H_v(t,z)
:=
\frac{1}{2i\pi}
I_+\left[
\left(X^*+2tL_{u_0}-z\operatorname{Id}\right)^{-1}v
\right].
\end{equation}
Then there exists a unique function $h_v(t)\in L^2_+(\mathbb R)$
such that \(H_v(t,\cdot)\) is the Hardy extension of \(h_v(t)\), namely
\begin{equation}
\label{eq:lax-resolvent-hardy-representation}
H_v(t,z)
=
\frac1{2\pi}
\int_0^\infty e^{iz\xi}\widehat{h_v(t)}(\xi)\,d\xi,
\qquad z\in\mathbb C_+
\end{equation}
such that
\begin{equation}
\label{eq:lax-resolvent-hardy-bound}
\|h_v(t)\|_{L^2(\mathbb R)}
=
\|v\|_{L^2(\mathbb R)}.
\end{equation}
\end{lemma}
\begin{proof}
If $u_0\in H_{+}^2(\mathbb{R})$, the proof of the explicit formula in~\cite{1,2} gives 
\[h_v (t) =U(t)\mathrm{e}^{-i tL_{u_0}^2} v,\]
where $U(t)$ is the unitary solution of the following linear ODE in the space of bounded operators on $L^2_+(\mathbb R)$, 
\[ U'(t)=B_{u(t)}U(t),\quad U(0)=\mathrm{Id}. \]
Here $B_u:=-u T_{\partial_x \bar{u}}+\partial_x u T_{\bar{u}}+i\left(u T_{\bar{u}}\right)^2$ and $u(t)$ is the solution to \eqref{3.1} with initial datum $u_0$.
\end{proof}
\subsection{Main result}
In this paper, we used the Gérard-type explicit formula \eqref{1.8} to study the long-time behavior of \eqref{3.1}. More precisely, we proved the following scattering result for solutions to \eqref{3.1} with initial data in $L_{+}^{2}(\mathbb R)$.
\begin{theorem}
\label{theorem 1.5}
Given $u_0\in L_+^{2}(\mathbb R)$. Let \(u\in C(\mathbb R;L_+^2(\mathbb R))\) be the solution to the
defocusing (CM-DNLS) \eqref{3.1} with initial data \(u(0)=u_0\). Then there exist
\(v_0^-,v_0^+\in L_+^2(\mathbb R)\) such that
\begin{equation}
\label{3.13}
\lim_{t\to\pm\infty}
\left\|
u(t)-e^{it\partial_x^2}v_0^\pm
\right\|_{L^2(\mathbb R)}
=0.
\end{equation}
The negative-time scattering state is characterized by
\[
\widehat{v_0^-}(\xi)
=
\widetilde u_0(\xi)\mathbf 1_{\xi>0},
\qquad
\widetilde u_0:=\mathcal F_{d,u_0}u_0.
\]
For the positive-time scattering state, define
\[
u_0^\sharp(x):=\overline{u_0(-x)}
\]
and let
\[
\widetilde{u_0^\sharp}
:=
\mathcal F_{d,u_0^\sharp}u_0^\sharp .
\]
If \(v_{0,\sharp}^-\in L_+^2(\mathbb R)\) is defined by
\[
\widehat{v_{0,\sharp}^-}(\xi)
=
\widetilde{u_0^\sharp}(\xi)\mathbf 1_{\xi>0},
\]
then one defines
\[
v_0^+(x):=\overline{v_{0,\sharp}^-(-x)}.
\]
Here \(\mathcal F_{d,u_0}\) denotes the distorted Fourier transform associated with
\(L_{u_0}\) defined in Section \ref{sec 2.2}.
\end{theorem}
\begin{remark}
\label{remark 1.101}
Let
\[
\mathcal R f(x):=\overline{f(-x)}.
\]
If \(u\) is a solution to \eqref{3.1} with initial data \(u_0\), then
\[
u^\sharp(t,x):=\mathcal R u(-t,x)=\overline{u(-t,-x)}
\]
is also a solution to \eqref{3.1}, with initial data
\[
u_0^\sharp=\mathcal R u_0.
\]
Moreover, \(\mathcal R\) preserves \(L_{+}^{2}(\mathbb R)\).

Therefore, once the negative-time scattering statement has been proved for arbitrary
initial data in \(L_{+}^{2}(\mathbb R)\), applying it to \(u_0^\sharp\) gives
\[
u^\sharp(t)-e^{it\partial_x^2}v_{0,\sharp}^-
\to0
\qquad\text{in }L^2(\mathbb R)
\]
as \(t\to-\infty\). Replacing \(t\) by \(-t\) and applying \(\mathcal R\), we obtain
\[
u(t)-e^{it\partial_x^2}\mathcal R v_{0,\sharp}^-
\to0
\qquad\text{in }L^2(\mathbb R)
\]
as \(t\to+\infty\). Thus it is enough to prove the negative-time scattering statement
\[
\lim_{t\to-\infty}
\left\|
u(t)-e^{it\partial_x^2}v_0^-
\right\|_{L^2(\mathbb R)}
=0.
\]
\end{remark}
\begin{remark}
As mentioned in the introduction, Kim--Kwon proved the scattering result for solutions to \eqref{1.5} in $H^{1,1}(\mathbb{R})$ \cite{23}. More precisely, their appraoch relies on the pseudo-conformal symmetry, and thus requires $x u_0 \in L^2(\mathbb{R})$.  Also, it is expected that we can follow their approach to show the scattering result for the defocusing (CM-DNLS) \eqref{3.1} in $H^{1,1}(\mathbb{R})$.  The great progress we made in this paper is to extend the chiral scattering result to chiral initial data in $L_{+}^{2}(\mathbb R)$ without any weighted condition. On the other hand, we can characterize the scattering term using the distorted Fourier transform associated with the Lax operator $L_{u_0}$ of the defocusing (CM-DNLS) \eqref{3.1}. In fact, following the method used in this paper, combined with the analogous result on the distorted Fourier transform associated with the focusing  (CM-DNLS) \eqref{1.5} \cite[Section 4]{78}, we can also establish the scattering result for solutions to the focusing  (CM-DNLS) \eqref{1.5} corresponding to initial data in  $L_{+}^{2}(\mathbb R)\cap \{\|u\|_{L^2}^2 < 2\pi\}$, which is an extension of the chiral scattering result for the focusing (CM-DNLS) \eqref{1.5} in \cite{23}. Another highlight of this paper is that it is one of the first works that apply the Gérard-type explicit formula to study the long-time behavior of an integrable equation for a broad class of initial data, beyond the previously studied rational cases. See also \cite{29,0,0.01,0.1} for the study on the long-time behavior of the Benjamin--Ono equation and \cite{30} for the study on the long-time behavior of the half-wave maps equation with rational initial data.
\end{remark}
\begin{remark}
One can observe that the (unique) ground state solution $\frac{\sqrt{2}}{x+i}$ to the focusing (CM-DNLS) \eqref{1.5} (see \cite[Lemma 4.1]{5}) is in $L_{+}^{2}(\mathbb R)$ but not in $H^{1,1}(\mathbb{R})$. This shows that our progress on the long-time analysis of \eqref{3.1} is crucial.
\end{remark}
\subsection*{Structure of the paper}

The paper is organized as follows. In Section \ref{Section 2}, we develop the spectral preliminaries for
the Lax operator \(L_u\). We construct the normalized generalized eigenfunctions, prove
their uniform bounds, and establish the distorted Fourier transform, including its
unitarity, distributional extension, and stability with respect to the potential.

In Section \ref{section 2.3}, we rewrite the explicit formula in distorted Fourier variables. The main
step is to identify the action of \(X^*\) under the distorted Fourier transform. This
leads to a spectral representation of the defect term
\[
u(t,z)-e^{it\partial_x^2}v_0^-(z),
\]
first for smooth data and then, by approximation, for data in
\(L_{+}^{2}(\mathbb R)\).

Section \ref{section 3} is devoted to the proof of Theorem \ref{theorem 1.5}. We first reduce the
scattering statement to a weak radiation limit. We then prove this limit by
combining the distorted Fourier representation of the defect term with oscillatory estimates and approximation arguments.

Finally, the Appendix \ref{appendix} collects the auxiliary estimates used in the proof. These include
oscillatory integral bounds, resolvent estimates and the
construction of the rough boundary functional.
\vskip 0.5cm
\thank{The author is currently a postdoctoral researcher at the Department of Mathematics and Computer Science, University of Basel. Part of this work was carried out during his PhD studies at the Laboratoire de Mathématiques d’Orsay (UMR 8628), Université Paris-Saclay. The author is deeply grateful to Patrick Gérard for his insightful comments throughout this work. The author also wishes to thank Rowan Killip, Thierry Laurens and Monica Vi\c{s}an for their helpful suggestions. Finally, the author gratefully acknowledge financial support by the Swiss National Science Foundation (SNSF) under Grant No.~204121.}
\section{Preliminaries}
\label{Section 2}
In this section we develop the spectral tools associated with the Lax operator \(L_u\).
We first construct the bounded generalized eigenfunctions by a Fredholm argument. We then use these eigenfunctions to define the distorted Fourier
transform, prove its \(L^2\)-unitarity and surjectivity, and show the mapping and
continuity properties that will be used in the scattering argument. The corresponding
distorted Fourier theory for the focusing (CM-DNLS) equation was developed in
\cite[Section 4]{78}.

\subsection{Generalized eigenfunctions}\label{sec 2.1}
First we show that if $u \in L_{+}^2(\mathbb{R})$, the Lax operator $L_u$ has no eigenvalue.
\begin{proposition}
\label{prop:no_eigenvalues_L2_defocusing}
Let $u \in L_{+}^2(\mathbb{R})$, then $L_u$ has no eigenvalues.
\end{proposition}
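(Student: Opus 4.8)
The plan is to combine the self-adjointness and non-negativity of $L_u$ with a Fourier/Hardy-space analysis of a putative eigenfunction. First I would record the quadratic-form identity: for $f$ in the form domain of $L_u$,
\[
\langle L_u f, f\rangle_{L^2} = \langle D f, f\rangle_{L^2} + \|T_{\bar u}f\|_{L^2}^2 \geq 0,
\]
where $\langle u T_{\bar u}f, f\rangle_{L^2} = \langle T_{\bar u}f, \Pi(\bar u f)\rangle_{L^2} = \|T_{\bar u}f\|_{L^2}^2$ because $f \in L_{+}^2(\mathbb{R})$ (so $\Pi$ may be freely inserted). Hence $L_u \geq 0$ and every eigenvalue $\lambda$ is real and $\geq 0$. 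If $\lambda = 0$ and $L_u f = 0$, the identity forces $\int_0^\infty \xi|\widehat f(\xi)|^2\,d\xi = 0$, and since $\widehat f$ is supported in $[0,\infty)$ this gives $\widehat f \equiv 0$, i.e. $f = 0$; thus $0$ is not an eigenvalue.

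It remains to exclude eigenvalues $\lambda > 0$, which is the main point. Suppose $L_u f = \lambda f$ with $f \neq 0$. Writing the equation as $(D - \lambda)f = -u T_{\bar u}f$ and taking Fourier transforms gives $(\lambda - \xi)\widehat f(\xi) = \widehat{u T_{\bar u}f}(\xi)$ for all $\xi$. Since $u \in L_{+}^2(\mathbb{R})$ and $T_{\bar u}f = \Pi(\bar u f) \in L_{+}^2(\mathbb{R})$ (its $L^2$-norm is controlled by $\langle L_u f, f\rangle$), the product $u T_{\bar u}f$ lies in $L^1(\mathbb{R})$ with Fourier transform continuous, vanishing at infinity, and supported in $[0,\infty)$; in particular $\widehat{u T_{\bar u}f}(0) = 0$, and, since $\widehat f \in L^2$ near $\xi = \lambda$ while the right-hand side is continuous there, also $\widehat{u T_{\bar u}f}(\lambda) = 0$ (otherwise $\widehat f$ would have a non-square-integrable singularity at $\lambda$). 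Consequently $\widehat f(\xi) = (\lambda - \xi)^{-1}\widehat{u T_{\bar u}f}(\xi)$, and, reading the eigenvalue equation on $\mathbb{C}_{+}$ where $f$ and $u$ extend holomorphically with the usual Hardy-space decay, one integrates the first-order ODE $(\mathrm{e}^{-i\lambda x}f)' = -i\mathrm{e}^{-i\lambda x}\,uT_{\bar u}f$ to obtain
\[
f(x) = -i\,\mathrm{e}^{i\lambda x}\int_{-\infty}^{x} \mathrm{e}^{-i\lambda y}\,u(y)\,(T_{\bar u}f)(y)\,dy,
\]
which, since $uT_{\bar u}f \in L^1(\mathbb{R})$ and $\widehat{uT_{\bar u}f}(\lambda)=0$, already yields $f \in C_0(\mathbb{R})$.

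The remaining step — turning these necessary conditions into a contradiction — is where I expect the real work to lie. One exploits the self-consistency between $f$ and $T_{\bar u}f = \Pi(\bar u f)$ together with the holomorphic (Riemann–Hilbert) structure of $v := \Pi(\bar u f)$ on $\mathbb{C}_{+}$: from $f \in C_0(\mathbb{R})$ one has $\bar u f \in L^2(\mathbb{R})$, and a boundedness/unique-continuation argument on the pair $(f,v)$ — or, alternatively, the spectral analysis of $L_u$ carried out in \cite{5} — rules out a nonzero solution when $u$ is only assumed to lie in $L_{+}^2(\mathbb{R})$. I would flag this embedded-eigenvalue exclusion for $\lambda>0$ as the main obstacle; the positivity of $L_u$ and the case $\lambda = 0$ are routine.
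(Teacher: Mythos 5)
Your positivity argument and the exclusion of $\lambda=0$ are correct, and the reduction of the case $\lambda>0$ to the facts $\widehat{uT_{\bar u}f}(0)=\widehat{uT_{\bar u}f}(\lambda)=0$ and $f\in C_0(\mathbb{R})$ is sound as far as it goes. But the step you flag as the remaining obstacle is in fact the entire content of the proposition: since $L_u\ge 0$ and $\sigma_{\mathrm{ess}}(L_u)=[0,\infty)$, every candidate eigenvalue is an embedded one, and your necessary conditions ($f$ continuous, vanishing at $\pm\infty$, with $\widehat{uT_{\bar u}f}$ vanishing at $0$ and $\lambda$) do not by themselves produce a contradiction --- generic perturbations of $D$ can perfectly well have embedded eigenvalues, so some structural input specific to the chiral/Hardy setting is indispensable, and the appeal to ``a boundedness/unique-continuation argument'' is not an argument. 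So the proposal has a genuine gap.

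The paper closes it in one line by invoking the identity established in the proof of \cite[Proposition 5.1]{5}: for the defocusing Lax operator $L_u=D+uT_{\bar u}$, any eigenfunction $\psi\in H_{+}^{1/2}(\mathbb{R})$ satisfies
\begin{align*}
\|\psi\|_{L^2}^2+\frac{1}{2\pi}\,|\langle\psi,u\rangle|^2=0,
\end{align*}
a virial/commutator-type identity (obtained by pairing the eigenvalue equation with the adjoint $X^{*}$ of multiplication by $x$ and exploiting the Hardy-space structure, which produces the boundary term $|\langle\psi,u\rangle|^2$). In the defocusing sign both terms are nonnegative, so $\psi=0$, and all eigenvalues --- including the embedded ones $\lambda>0$ --- are excluded at once, with no case distinction. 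If you want to complete your proof, this identity (or an equivalent use of $X^{*}$ and $I_{+}$ on the eigenfunction) is the missing ingredient; your Fourier-side preliminaries are then not needed.
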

\begin{proof}
Let $\psi \in H_{+}^{1/2}(\mathbb{R})$ be an eigenvector of $L_u$. From the proof of \cite[Proposition 5.1]{5}, we can infer
\begin{align*}
\|\psi\|_{L^2}^2+\frac{1}{2 \pi}|\langle\psi, u\rangle|^2 = 0.
\end{align*}
Thus $\psi = 0$. 
\end{proof}

\begin{remark}
Using the Weyl's theorem, we can easily conclude that $\sigma_{ess} (L_u) = \sigma_{ess}(D) = [0,\infty)$. Since $L_u$ has no eigenvalues, we can conclude that $\sigma(L_u)=  \sigma_{ac}(L_u) \cup \sigma_{sc}(L_u)= [0,\infty)$.
\end{remark}
\begin{remark}
For the Lax operator of the focusing (CM-DNLS) \eqref{1.5}, if $\|u\|_{L^2}^2<2\pi$, we can also infer that this Lax operator has no eigenvalues.
\end{remark}
\noindent We now give a fundamental result concerning the generalized eigenfunctions of $L_u$, which constitute the foundation of the distorted Fourier transform.
\begin{proposition}[Generalized eigenfunctions]
\label{prop:generalized_eigenfunctions}
Let $u\in L_{+}^{2}(\mathbb R)$. For every $\lambda >0$, the space
\[
\mathscr{M}_u(\lambda)
=\bigl\{\,m\in L^\infty_+(\mathbb{R}):\;(L_u-\lambda)m=0\ \text{in the distributional sense}\bigr\}
\]
is one–dimensional.  Every element $m\in\mathscr{M}_u(\lambda)$ satisfies
\begin{align*}
\mathrm{e}^{-i \lambda x} m(x).
\end{align*}
has a limit $\ell_m( \pm \infty) \text { as } x \rightarrow \pm \infty$. Furthermore, $\ell_m(-\infty)$ characterizes $m$ and
\begin{align*}
\left|\ell_m(+\infty)\right|=\left|\ell_m(-\infty)\right|.
\end{align*}
\end{proposition}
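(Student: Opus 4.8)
The plan is to recast the equation $(L_u-\lambda)m=0$ as an integral (fixed-point) equation on the Hardy space of bounded functions, exploiting that $L_u = D + uT_{\bar u}$ is a rank-one-type perturbation of $D$ in the relevant sense. Writing $m(x) = e^{i\lambda x}\bigl(\ell + r(x)\bigr)$ with $\ell\in\mathbb{C}$ a prescribed constant, the equation becomes $D\bigl(e^{i\lambda x}r\bigr) = -u\,T_{\bar u}m$ modulo the $\lambda m$ term, i.e. after conjugating out the oscillation, $r' = i\,e^{-i\lambda x}\,u\,\Pi(\bar u m)$. The key structural point is that $\Pi(\bar u m)\in L^2_+$ because $\bar u m \in L^1\cap L^2$ (using $u\in L^2$ and $m\in L^\infty$), and then $u\cdot\Pi(\bar u m)\in L^1$ by Cauchy--Schwarz, so the right-hand side is integrable in $x$ and $r(x) = -i\int_x^{+\infty} e^{-i\lambda y}u(y)\Pi(\bar u m)(y)\,dy$ converges as $x\to+\infty$ to $0$, giving $\ell_m(+\infty)=\ell$; the limit as $x\to-\infty$ then also exists, equal to $\ell + \bigl(-i\int_{\mathbb{R}} e^{-i\lambda y} u\,\Pi(\bar u m)\,dy\bigr)$. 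One must check this is consistent with $m\in L^2_+$, i.e. that the Hardy-space (holomorphic) constraint forces $m$ to be determined by one datum; here the decay hypothesis $|x|^\alpha u\in L^2$ enters to upgrade $\bar u m \in L^1$ to a quantitative bound that makes the fixed-point contraction work and pins down the solution space.

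The steps, in order, are: (1) Set up the Banach space $X = \{r\in L^\infty_+(\mathbb{R})\}$ (or a weighted variant) and define the affine map $\Phi(r)(x) = -i\int_x^{+\infty} e^{-i\lambda(y-x)} u(y)\,\Pi\!\bigl(\bar u\,e^{i\lambda\cdot}(\ell+r)\bigr)(y)\,dy$; verify $\Phi$ maps $X$ to $X$ using $\|\Pi(\bar u m)\|_{L^2}\le \|u\|_{L^2}\|m\|_{L^\infty}$ and $\|u\Pi(\bar u m)\|_{L^1}\le\|u\|_{L^2}\|\Pi(\bar u m)\|_{L^2}$. (2) Show $\Phi$ is a contraction: either globally when $\|u\|_{L^2}$ is small, or — for general $u\in L^{2,\alpha}_+$ — by a standard partition-of-the-line / iteration-in-blocks argument, splitting $\mathbb{R}$ into finitely many intervals on each of which the local $L^2$ mass of $u$ is small, and composing the local solution operators; the weight $|x|^\alpha$ controls the tail $\int_{|x|>R}|u|^2$ and makes the number of blocks finite. (3) Conclude existence and uniqueness of $m$ with $\ell_m(+\infty)=\ell$ prescribed, hence $\mathscr{M}_u(\lambda)$ is one-dimensional, and that $e^{-i\lambda x}m(x)\to\ell$ as $x\to+\infty$ and to some $\ell_m(-\infty)$ as $x\to-\infty$. (4) For the modulus identity $|\ell_m(+\infty)| = |\ell_m(-\infty)|$, use a conservation/Wronskian-type computation: multiply $(L_u-\lambda)m=0$ by $\bar m$, integrate over $[-R,R]$, and take $R\to\infty$; the self-adjointness (in the quadratic-form sense) of $L_u$ makes the boundary terms $\tfrac{1}{i}\bigl[|m|^2\bigr]_{-R}^{R}$ converge to $|\ell_m(+\infty)|^2-|\ell_m(-\infty)|^2$ while the interior terms cancel, because $\langle L_u m,\bar m\rangle - \langle m, \overline{L_u m}\rangle$ localizes to the flux of $D$ plus a Toeplitz piece that is real. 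Equivalently, one reads off $|\ell_m(\pm\infty)|$ from $|m|^2$'s averaged limits and the identity $\int (\,u T_{\bar u}m\,)\bar m = \tfrac{1}{2\pi}|\langle m,u\rangle|^2\ge 0$ forces the two moduli to coincide.

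I expect the main obstacle to be Step (2): establishing the contraction (equivalently, invertibility of $I - K$ where $K$ is the Volterra-type integral operator) for \emph{general} $u\in L^{2,\alpha}_+(\mathbb{R})$ rather than small data. The operator $K$ is not of small norm, and because $\Pi$ is nonlocal the naive Volterra/causality structure that would give automatic invertibility is partially destroyed — one has a genuine coupling through $\Pi(\bar u m)$. The resolution should be the block-iteration scheme together with an a priori bound: first prove that any bounded distributional solution is unique (via a Gronwall argument on $r(x) = -i\int_x^\infty\cdots$, using that $\int_{|y|>R}|u(y)\Pi(\bar u m)(y)|\,dy \to 0$), which gives at most one-dimensionality, and separately prove existence by the finite-block composition; here the precise role of $\alpha>0$ is to guarantee the tail smallness $\bigl(\int_{|x|>R}|u|^2\bigr)^{1/2}\lesssim R^{-\alpha}\||x|^\alpha u\|_{L^2}$ needed for the iteration to terminate and for the limits $\ell_m(\pm\infty)$ to exist. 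A secondary technical point is justifying that distributional solutions are automatically continuous and that the passage to the fixed-point equation is reversible; this follows from elliptic regularity for $D$ once one knows $uT_{\bar u}m\in L^1_{loc}$.
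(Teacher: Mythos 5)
Your reduction to the integral equation, the argument that $u\,\Pi(\bar u m)\in L^1$ (via $\Pi(\bar u m)\in L^2_+$ and Cauchy--Schwarz; note $\bar u m$ is only in $L^2$, not $L^1$, but that is all you need) and hence that $\mathrm{e}^{-i\lambda x}m(x)$ has limits at $\pm\infty$, and the Wronskian computation giving $|\ell_m(+\infty)|=|\ell_m(-\infty)|$ all match the paper. The gap is exactly where you predicted it: the invertibility of $I-K_\lambda$. Your proposed resolution --- a contraction obtained by partitioning $\mathbb{R}$ into blocks of small local $L^2$ mass and composing local solution operators, plus a Gronwall argument for uniqueness --- does not go through, and for the reason you yourself flag: $K_\lambda$ is not Volterra. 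The value $\Pi\bigl(\bar u\,\mathrm{e}^{i\lambda\cdot}a\bigr)(y)$ depends on $a$ on all of $\mathbb{R}$, so smallness of $\int_I|u|^2$ on a block $I$ does not make the map a contraction on that block, the blockwise solution operators do not compose causally, and Gronwall cannot be run on $r(x)=-i\int_x^\infty\cdots$ because the integrand at $y$ sees $r$ everywhere. Moreover the weight $|x|^\alpha$ is not what makes tails small (plain $L^2$ already gives $\int_{|x|>R}|u|^2\to0$), so in your scheme the hypothesis $\alpha>0$ plays no genuine role --- a sign that the actual mechanism is missing.

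The paper's route is different and is the missing idea: Fredholm alternative rather than contraction. One shows (i) $K_\lambda:L^\infty\to L^\infty$ is \emph{compact} (equicontinuity and uniform tail estimates of $K_\lambda(a)$ from $\|u\|_{L^2}\bigl(\int_x^{x'}|u|^2\bigr)^{1/2}$-type bounds give relative compactness), and (ii) $I-K_\lambda$ is \emph{injective}. Injectivity is where $\alpha>0$ really enters: if $f=K_\lambda f$, then $m=\mathrm{e}^{i\lambda x}f$ has $\ell_m(-\infty)=0$, hence $\ell_m(+\infty)=0$ by the modulus identity, so $f$ admits both the forward and backward integral representations; a bootstrap using $|x|^\alpha u\in L^2$ and Hardy's inequality (to keep $\bigl(1+|x|^\gamma\bigr)\Pi g\in L^2$) upgrades $f\in L^\infty$ step by step to $|x|^{\beta}f\in L^\infty$ with increasing $\beta$, and after finitely many iterations $f\in L^2$. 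Then $m\in H_+^{1/2}$ would be a genuine eigenvector of $L_u$, contradicting the proposition that $L_u$ has no eigenvalues (which rests on the identity $\|\psi\|_{L^2}^2+\tfrac1{2\pi}|\langle\psi,u\rangle|^2=0$). Your proposal contains neither the compactness step nor this decay-bootstrap-to-$L^2$ reduction to the no-eigenvalue result, and without them the one-dimensionality claim is not established. (A small additional slip: your closing identity $\int(uT_{\bar u}m)\bar m=\tfrac1{2\pi}|\langle m,u\rangle|^2$ is not the right evaluation --- the relevant point is simply that this pairing equals $\|T_{\bar u}m\|^2$ and is real, so it cancels in the Wronskian, as in the paper.)
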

\begin{proof}
Notice that, if $m \in L^{\infty}(\mathbb{R})$,
\begin{align*}
\Pi(\bar{u} m) \in L_{+}^2(\mathbb{R}),
\end{align*}
so that $u T_{\bar{u}} m \in L^1(\mathbb{R})$. Equation $(L_u - \lambda) m = 0$ is equivalent to
\begin{align*}
D\left(\mathrm{e}^{-i \lambda x} m(x)\right)=-\mathrm{e}^{-i \lambda x} u(x) \Pi(\bar{u} m)(x),
\end{align*}
which implies that
\begin{align*}
D\left(\mathrm{e}^{-i \lambda x} m(x)\right) \in L^1(\mathbb{R}),
\end{align*}
hence $\mathrm{e}^{-i \lambda x} m(x)$ has a limit $\ell_m( \pm \infty)$ as $x \rightarrow \pm \infty$. Furthermore,
\begin{align*}
i\left(\left|\ell_m(+\infty)\right|^2-\left|\ell_m(-\infty)\right|^2\right) & =\lim _{R \rightarrow+\infty} \int_{-R}^R[m(x) \overline{D m(x)}-D m(x) \overline{m(x)}] d x \\
& =\int_{-\infty}^{\infty}\left[-m(x) \overline{u T_{\bar{u}} m(x)}+u T_{\bar{u}} m(x) \overline{m(x)}\right] d x \\
& =-\left\|T_{\bar{u}} m\right\|^2+\left\|T_{\bar{u}} m\right\|^2 \\
& =0 .
\end{align*}
This proves $\left|\ell_m(+\infty)\right|=\left|\ell_m(-\infty)\right|$. It remains to prove that the space of such functions $m$ is one dimensional.\\\\
Given $\ell \in \mathbb{C}$, the problem $\left(L_u-\lambda\right) m=0, \ell_m(-\infty)=\ell$ is equivalent to 
\begin{equation}
\label{3.2}
a=a_\lambda(x) = a(x,\lambda): = \mathrm{e}^{-i \lambda x} m,\quad a-K_\lambda(a)=\ell,
\end{equation}
where $K_\lambda: L^{\infty}(\mathbb{R}) \rightarrow L^{\infty}(\mathbb{R})$ is defined as 
\begin{align*}
K_\lambda(a)(x):=-i\int_{-\infty}^x \mathrm{e}^{-i \lambda y} u(y) \Pi\left(\bar{u} \mathrm{e}^{i \lambda \cdot} a \right)(y) d y.
\end{align*}
We claim that $1-K_\lambda: L^{\infty} \rightarrow L^{\infty}$ is bijective. We shall use the Fredholm alternative by showing that $K_\lambda$ is compact and $1-K_{\lambda}$ is injective.
\begin{lemma}
\label{lemma 2.3}
The operator $K_\lambda: L^{\infty}(\mathbb{R}) \rightarrow L^{\infty}(\mathbb{R})$ is compact.
\end{lemma}
\begin{proof}
If $\|a\|_{L^{\infty}} \leq 1$, $\mathrm{e}^{-i \lambda y} u(y) \Pi\left(\bar{u} \mathrm{e}^{i \lambda y} a\right)$ belongs to a bounded subset of $L^1$. Furthermore,
\begin{equation}
\label{2.2}
\begin{aligned}
& \left|K_\lambda(a)(x)-K_\lambda(a)\left(x^{\prime}\right)\right| \leq\|u\|_{L^2}\left(\int_x^{x^{\prime}}|u(y)|^2 d y\right)^{\frac{1}{2}}, \quad x \leq x^{\prime} \\
& \left|K_\lambda(a)(x)\right| \leq\|u\|_{L^2}\left(\int_{-\infty}^x|u(y)|^2 d y\right)^{\frac{1}{2}} \\
& \left|K_\lambda(a)(x)-K_\lambda(a)(+\infty)\right| \leq\|u\|_{L^2}\left(\int_x^{+\infty}|u(y)|^2 d y\right)^{\frac{1}{2}}
\end{aligned}
\end{equation}
For all these reasons, the set
\begin{align*}
\left\{K_\lambda(a),\|a\|_{L^{\infty}} \leq 1\right\}
\end{align*}
is relatively compact in $L^{\infty}$.
\end{proof}
\begin{lemma}
\label{lemma 2.6}
Let
\[
u\in L^2_+(\mathbb R).
\]
For every \(\lambda>0\), the operator
\[
I-K_\lambda:L^\infty(\mathbb R)\to L^\infty(\mathbb R)
\]
is injective.
\end{lemma}

\begin{proof}
Let \(f\in L^\infty(\mathbb R)\) satisfy
\[
f=K_\lambda f.
\]
Set
\[
m(x):=e^{i\lambda x}f(x),
\qquad
h:=T_{\bar u}m=\Pi(\bar u m),
\qquad
q:=uh .
\]
Then
\[
h\in L^2_+(\mathbb R),
\qquad
q\in L^1(\mathbb R),
\]
and \(f\) is locally absolutely continuous with
\[
f'(x)=-i e^{-i\lambda x}q(x).
\]
Equivalently,
\[
(D-\lambda)m=-q=-uT_{\bar u}m
\]
in the sense of distributions. Moreover, the formula gives
\[
f(x)
=
-i\int_{-\infty}^x e^{-i\lambda y}q(y)\,dy,
\]
and therefore
\[
f(-\infty)=0.
\]
From the previous derivation, we know
\[
f(+\infty)= f(-\infty) =0.
\]
Thus
\[
m(x)=e^{i\lambda x}f(x)\to0
\qquad\text{as }|x|\to\infty .
\]

We next prove that \(m\in L^2(\mathbb R)\). The argument below is adapted from the tail bootstrap of \cite{78}, where bounded solutions with two-sided decay are shown to belong to \(L^2\) under only an \(L^2\) assumption on the potential. For \(R>0\), define
\[
H(R):=\int_{|y|>R}|u(y)h(y)|\,dy,
\qquad
Q(R):=\|u\|_{L^2(|x|>R)},
\]
\[
F(R):=\|m\|_{L^2(|x|<R)},
\qquad
A(R):=\int_{|x|>R}\frac{|u(x)|}{|x|}\,dx .
\]
Since \(q=uh\in L^1\) and \(f(\pm\infty)=0\), we have, for \(|x|>R\),
\begin{equation}
\label{eq:FR_tail_bound_m}
|m(x)|=|f(x)|
\le H(|x|).
\end{equation}
We now estimate \(H(R)\). Decompose
\[
h=h_{0,R}+h_{1,R},
\]
where
\[
h_{0,R}:=\Pi\bigl(\mathbf 1_{\{|y|>R/2\}}\bar u m\bigr),
\qquad
h_{1,R}:=\Pi\bigl(\mathbf 1_{\{|y|<R/2\}}\bar u m\bigr).
\]
For the first term, by the \(L^2\)-boundedness of \(\Pi\) and
\eqref{eq:FR_tail_bound_m},
\[
\begin{aligned}
\int_{|x|>R}|u(x)h_{0,R}(x)|\,dx
&\le
Q(R)\|h_{0,R}\|_2  \\
&\le
Q(R)\|\mathbf 1_{\{|y|>R/2\}}\bar u m\|_2  \\
&\le
Q(R)Q(R/2)H(R/2)  \\
&\le
Q(R/2)^2H(R/2).
\end{aligned}
\]
Now we denote the Hilbert transform on the line by $H_{\mathbb R}$. For the second term, the supports of
\(\mathbf 1_{\{|y|<R/2\}}\bar u m\) and \(\mathbf 1_{\{|x|>R\}}\) are separated.
Hence the identity $\Pi = \frac{1}{2} (I + iH_{\mathbb R})$ gives
\[
|h_{1,R}(x)|
\le
\frac{C}{|x|}
\int_{|y|<R/2}|\bar u(y)m(y)|\,dy,
\qquad |x|>R .
\]
Therefore
\[
\begin{aligned}
\int_{|x|>R}|u(x)h_{1,R}(x)|\,dx
&\le
C A(R)
\int_{|y|<R/2}|\bar u(y)m(y)|\,dy  \le
C\|u\|_2 F(R/2)A(R).
\end{aligned}
\]
Combining the two estimates, we get
\begin{equation}
\label{eq:FR_H_ineq}
H(R)
\le
Q(R/2)^2H(R/2)
+
C\|u\|_2F(R/2)A(R),
\qquad R>0.
\end{equation}

We claim that \(H\in L^2(R_0,\infty)\) for some large \(R_0\). First observe that
\(Q(R)\to0\) as \(R\to\infty\). Also, by the dual Hardy's inequality,
\[
A\in L^2(R_0,\infty)
\]
for every \(R_0>0\), and moreover
\[
\|A\|_{L^2(R_0,\infty)}\to0
\qquad\text{as }R_0\to\infty.
\]
Choose \(R_0\) so large that
\[
Q(R_0/2)^2\le \delta
\]
with \(\delta>0\) sufficiently small.

For \(S>R_0\), set
\[
Y(S):=\int_{R_0}^S H(R)^2\,dR .
\]
Squaring \eqref{eq:FR_H_ineq}, integrating over \(R\in(R_0,S)\), and absorbing
the finite contribution from \(R\in(R_0/2,R_0)\), we obtain
\[
Y(S)
\le
C_{R_0}
+
C\delta^2Y(S)
+
C\int_{R_0}^S F(R/2)^2A(R)^2\,dR .
\]
By \eqref{eq:FR_tail_bound_m},
\[
F(R/2)^2
\le
C_{R_0}
+
2\int_{R_0}^{R}H(s)^2\,ds
\le
C_{R_0}+2Y(R).
\]
Hence
\[
Y(S)
\le
C_{R_0}
+
C\delta^2Y(S)
+
C\int_{R_0}^S Y(R)A(R)^2\,dR .
\]
Taking \(\delta\) sufficiently small and absorbing the second term on the
right-hand side, we get
\[
Y(S)
\le
C_{R_0}
+
C\int_{R_0}^S Y(R)A(R)^2\,dR .
\]
Since \(A^2\in L^1(R_0,\infty)\), Gronwall's inequality implies
\[
\sup_{S>R_0}Y(S)<+\infty .
\]
Thus
\[
H\in L^2(R_0,\infty).
\]
Using \eqref{eq:FR_tail_bound_m} again, we conclude that
\[
m\in L^2(\mathbb R).
\]

We now prove that \(m\in L^2_+(\mathbb R)\). Since
\[
h=T_{\bar u}m\in L^2_+(\mathbb R)
\]
and \(u\in L^2_+(\mathbb R)\), we have
\[
q=uh\in L^1(\mathbb R),
\qquad
\operatorname{supp}\widehat q\subset[0,\infty)
\]
in the sense of distributions. The equation
\[
(D-\lambda)m=-q
\]
gives, in Fourier variables,
\[
(\xi-\lambda)\widehat m(\xi)=-\widehat q(\xi).
\]
Let \(\psi\in C_c^\infty((-\infty,0))\). Since \(\lambda\ge0\), the function
\[
\phi(\xi):=\frac{\psi(\xi)}{\xi-\lambda}
\]
also belongs to \(C_c^\infty((-\infty,0))\). Therefore
\[
\langle \widehat m,\psi\rangle
=
\langle (\xi-\lambda)\widehat m,\phi\rangle
=
-\langle \widehat q,\phi\rangle
=
0.
\]
Thus \(\widehat m\) vanishes on \((-\infty,0)\). Since \(m\in L^2(\mathbb R)\),
this proves
\[
m\in L^2_+(\mathbb R).
\]

It remains to justify that \(m\) is an eigenfunction of the form-defined operator
\(L_u\). We first show that
\[
m\in H^{1/2}_+(\mathbb R).
\]
Let
\[
P_R=\chi(D/R),
\]
where \(\chi\in C_c^\infty(\mathbb R)\), \(0\le\chi\le1\), \(\chi=1\) near
\(0\), and \(\chi(\xi/R)\uparrow1\) for every \(\xi\ge0\). Testing
\[
D m+u h=\lambda m
\]
against \(P_Rm\), and taking real parts, gives
\[
\int_0^\infty
\xi\,\chi(\xi/R)|\widehat m(\xi)|^2\,\frac{d\xi}{2\pi}
+
\operatorname{Re}\int_{\mathbb R}u h\,\overline{P_Rm}\,dx
=
\lambda\,\operatorname{Re}\langle m,P_Rm\rangle .
\]
The multipliers \(P_R\) are uniformly bounded on \(L^\infty\), and
\(P_Rm\to m\) in \(L^2_{\mathrm{loc}}\). By truncating \(u\) and using the
uniform \(L^\infty\)-boundedness of \(P_Rm\), we get
\[
\bar u\,P_Rm\to \bar u\,m
\qquad\text{in }L^2.
\]
Hence
\[
T_{\bar u}(P_Rm)\to T_{\bar u}m=h
\qquad\text{in }L^2.
\]
Therefore
\[
\int_{\mathbb R}u h\,\overline{P_Rm}\,dx
=
\langle h,T_{\bar u}(P_Rm)\rangle
\longrightarrow
\|h\|_2^2,
\]
and
\[
\langle m,P_Rm\rangle\to\|m\|_2^2.
\]
Letting \(R\to+\infty\) and using monotone convergence, we obtain
\[
\int_0^\infty
\xi|\widehat m(\xi)|^2\,\frac{d\xi}{2\pi}<+\infty.
\]
Thus
\[
m\in H^{1/2}_+(\mathbb R).
\]

Finally, the distributional identity
\[
D m+uT_{\bar u}m=\lambda m
\]
implies, first for \(\varphi\in\mathcal S_+(\mathbb R)\) and then by density for
all \(\varphi\in H^{1/2}_+(\mathbb R)\), that
\[
\mathcal Q_u(m,\varphi)
=
\lambda\langle m,\varphi\rangle .
\]
Hence
\[
m\in\operatorname{Dom}(L_u),
\qquad
L_um=\lambda m .
\]
By Proposition \(\ref{prop:no_eigenvalues_L2_defocusing}\), the operator \(L_u\)
has no eigenvalues. Therefore
\[
m=0.
\]
Consequently \(f=0\), and the injectivity of
\[
I-K_\lambda:L^\infty(\mathbb R)\to L^\infty(\mathbb R)
\]
follows.
\end{proof}
\noindent Now we can conclude the proof of Proposition \ref{prop:generalized_eigenfunctions}. By the Fredholm alternative, the operator $\operatorname{Id}-K_\lambda: L^{\infty}(\mathbb{R}) \rightarrow L^{\infty}(\mathbb{R})$ is bijective. Coming back to \eqref{3.2}, the linear form 
\begin{align*}
m \in \mathscr{M}_u(\lambda) \mapsto \ell_m(-\infty) \in \mathbb{C}
\end{align*}
is bijective. Therefore $\mathscr{M}_u(\lambda)$ is one dimensional.  Furthermore, one can verify \(m(\cdot,\lambda)\in L^\infty_+(\mathbb R)\).
\end{proof}
The preceding proposition treats strictly positive energies. We shall also need the
limiting equation at \(\lambda=0\), in order to control the behavior of \(m_-(x,\lambda)\) as \(\lambda\to0_+\).
\begin{remark}
\label{remark 2.7}
We shall also use the limiting case \(\lambda=0\) of \eqref{3.2}. Define
\[
K_0a(x)
:=
-i\int_{-\infty}^x
u(y)\Pi(\overline u\,a)(y)\,dy .
\]
Then the same compactness argument as in Lemma \ref{lemma 2.3} shows that
\[
K_0:L^\infty(\mathbb R)\to L^\infty(\mathbb R)
\]
is compact. Moreover, the injectivity argument in Lemma \ref{lemma 2.6} also applies at
\(\lambda=0\). Hence
\[
\ker(I-K_0)=\{0\}.
\]
By the Fredholm alternative,
\[
I-K_0:L^\infty(\mathbb R)\to L^\infty(\mathbb R)
\]
is invertible.

Consequently, for every \(\ell\in\mathbb C\), the equation
\[
a-K_0a=\ell
\]
has a unique solution in \(L^\infty(\mathbb R)\). In particular, when \(\ell=1\), the
solution is
\[
a=1.
\]
Indeed, since \(u\in L^2_+(\mathbb R)\), we have
\[
\Pi(\overline u)=0
\]
in \(L^2(\mathbb R)\), and therefore
\[
K_0 1=0.
\]
Thus \(a=1\), and hence \(m=a=1\), is the unique solution of \eqref{3.2} with
\(\lambda=0\) and \(\ell=1\).
\end{remark}
\noindent
The next estimate is the main uniform bound for the generalized eigenfunctions. It is
essential that the bound be uniform both near \(\lambda=0\) and as
\(\lambda\to+\infty\), since later arguments use compactness and approximation in the
spectral variable.
\begin{proposition}[Uniform \(L^\infty_x\) bound in \(\lambda\)]
\label{prop:uniform_m}
Let
\[
u\in L^2_+(\mathbb R).
\]
For \(\lambda\ge0\), define
\[
K_\lambda a(x)
:=
-i\int_{-\infty}^x
e^{-i\lambda y}u(y)
\Pi\bigl(\overline u\,e^{i\lambda\cdot}a\bigr)(y)\,dy ,
\qquad a\in L^\infty(\mathbb R).
\]
Then, there exists a constant \(C=C(u)>0\) such that
\begin{equation}
\label{2.3}
\sup_{\lambda\ge0}
\|(I-K_\lambda)^{-1}\|_{\mathcal L(L^\infty,L^\infty)}
\le C .
\end{equation}
Consequently, if
\[
a_\lambda:=a(\cdot,\lambda):=(I-K_\lambda)^{-1}1,
\]
then
\[
\sup_{\lambda\ge0}\|a(\cdot,\lambda)\|_{L^\infty(\mathbb R)}\le C.
\]
For \(\lambda>0\), set
\[
m_-(x,\lambda):=e^{i\lambda x}a(x,\lambda).
\]
Then
\[
\sup_{\lambda>0}\|m_-(\cdot,\lambda)\|_{L^\infty(\mathbb R)}\le C.
\]
\end{proposition}

\begin{proof}
We split the proof into the low-frequency regime and the high-frequency regime.

\medskip
\noindent\textbf{Step 1: elementary bounds, continuity, and compactness.}
For \(a\in L^\infty(\mathbb R)\), by Cauchy-Schwarz inequality and the \(L^2\)-boundedness of
\(\Pi\),
\begin{equation}
\label{2.8.1}
\begin{aligned}
\|K_\lambda a\|_{L^\infty}
&\le
\int_{\mathbb R}|u(y)|
\left|
\Pi\bigl(\overline u\,e^{i\lambda\cdot}a\bigr)(y)
\right|\,dy  \\
&\le
\|u\|_{L^2}
\left\|
\Pi\bigl(\overline u\,e^{i\lambda\cdot}a\bigr)
\right\|_{L^2}
\le
\|u\|_{L^2}^2\|a\|_{L^\infty}.
\end{aligned}
\end{equation}
Hence \(K_\lambda\) is bounded on \(L^\infty\), uniformly in \(\lambda\ge0\).

We next prove that
\[
\lambda\mapsto K_\lambda
\]
is continuous in the operator norm of
\(\mathcal L(L^\infty,L^\infty)\). Let \(\lambda,\mu\ge0\) and
\(\|a\|_{L^\infty}\le1\). Write
\[
(K_\lambda-K_\mu)a=A_1+A_2,
\]
where
\[
A_1(x)
=
-i\int_{-\infty}^x
\bigl(e^{-i\lambda y}-e^{-i\mu y}\bigr)u(y)
\Pi\bigl(\overline u\,e^{i\lambda\cdot}a\bigr)(y)\,dy,
\]
and
\[
A_2(x)
=
-i\int_{-\infty}^x
e^{-i\mu y}u(y)
\Pi\bigl(\overline u\,(e^{i\lambda\cdot}-e^{i\mu\cdot})a\bigr)(y)\,dy.
\]
Again by Cauchy-Schwarz inequality and the \(L^2\)-boundedness of \(\Pi\),
\[
\|A_1\|_{L^\infty}
\le
\|(e^{-i\lambda\cdot}-e^{-i\mu\cdot})u\|_{L^2}\|u\|_{L^2},
\]
and
\[
\|A_2\|_{L^\infty}
\le
\|u\|_{L^2}\|(e^{i\lambda\cdot}-e^{i\mu\cdot})u\|_{L^2}.
\]
Since
\[
e^{\pm i\lambda x}u(x)\to e^{\pm i\mu x}u(x)
\quad\text{in }L^2(\mathbb R)
\]
as \(\lambda\to\mu\), we obtain
\begin{equation}
\label{2.8.2}
\|K_\lambda-K_\mu\|_{\mathcal L(L^\infty,L^\infty)}
\to0
\qquad\text{as }\lambda\to\mu.
\end{equation}
We now prove the collective compactness needed below. Let
\[
B:=\{a\in L^\infty(\mathbb R):\|a\|_{L^\infty}\le1\}.
\]
We claim that
\[
\mathcal C:=\bigcup_{\lambda\ge0}K_\lambda(B)
\]
is relatively compact in \(L^\infty(\mathbb R)\). In fact, from \eqref{2.2} we know that the functions in \(\mathcal C\) can be extended
continuously to the two-point compactification \([-\infty,+\infty]\), with a uniform
modulus of continuity and a uniformly bounded set of values at \(+\infty\). By the
Arzelà--Ascoli theorem on \([-\infty,+\infty]\), \(\mathcal C\) is relatively compact in
\(L^\infty(\mathbb R)\).

\medskip
\noindent\textbf{Step 2: uniform invertibility on bounded \(\lambda\)-intervals.}
We claim that for every \(\Lambda>0\),
\begin{equation}
\label{2.8.7}
\sup_{0\le\lambda\le\Lambda}
\|(I-K_\lambda)^{-1}\|_{\mathcal L(L^\infty,L^\infty)}
<\infty.
\end{equation}
For each fixed \(\lambda\in[0,\Lambda]\), the operator \(K_\lambda\) is compact on
\(L^\infty\). By Lemma \ref{lemma 2.6} for \(\lambda>0\), and by Remark
\ref{remark 2.7} for \(\lambda=0\), we have
\[
\ker(I-K_\lambda)=\{0\}.
\]
Hence \(I-K_\lambda\) is invertible for each \(\lambda\in[0,\Lambda]\).

It remains to prove that the inverse norms are uniformly bounded. Suppose not. Then
there exist \(\lambda_n\in[0,\Lambda]\) and \(b_n\in L^\infty(\mathbb R)\) such that
\begin{equation}
\label{2.8.8}
\|b_n\|_{L^\infty}=1,
\qquad
\|(I-K_{\lambda_n})b_n\|_{L^\infty}\to0.
\end{equation}
By relative compactness of $\mathcal C$, the sequence \(\{K_{\lambda_n}b_n\}\) has a convergent
subsequence in \(L^\infty\). Since
\[
b_n=K_{\lambda_n}b_n+(I-K_{\lambda_n})b_n,
\]
the sequence \(\{b_n\}\) also has a convergent subsequence. Passing to this subsequence,
we may assume
\[
b_n\to b\quad\text{in }L^\infty,
\qquad
\lambda_n\to\lambda_*\in[0,\Lambda].
\]
Using the operator-norm continuity \eqref{2.8.2}, we obtain
\[
(I-K_{\lambda_*})b
=
\lim_{n\to\infty}(I-K_{\lambda_n})b_n
=0.
\]
Thus \(b=0\), by injectivity of \(I-K_{\lambda_*}\). This contradicts
\[
\|b\|_{L^\infty}
=
\lim_{n\to\infty}\|b_n\|_{L^\infty}
=1.
\]
Therefore \eqref{2.8.7} holds.

In particular,
\begin{equation}
\label{2.8.9}
\sup_{0\le\lambda\le\Lambda}
\|a(\cdot,\lambda)\|_{L^\infty}
\le
\sup_{0\le\lambda\le\Lambda}
\|(I-K_\lambda)^{-1}\|_{\mathcal L(L^\infty,L^\infty)}
<\infty.
\end{equation}

\medskip
\noindent\textbf{Step 3: the high-frequency limit operator.}
Define the Volterra operator
\begin{equation}
\label{2.8.10}
(K_\infty a)(x)
:=
-i\int_{-\infty}^x |u(y)|^2a(y)\,dy .
\end{equation}
Then \(K_\infty\) is bounded on \(L^\infty\). Moreover,
\begin{equation}
\label{2.8.11}
\|K_\infty^n\|_{\mathcal L(L^\infty,L^\infty)}
\le
\frac{\|u\|_{L^2}^{2n}}{n!},
\qquad n\ge0.
\end{equation}
Indeed, this follows by iterating the Volterra integral and estimating the integral over
the simplex
\[
-\infty<y_n<\cdots<y_1<x.
\]
Consequently the Neumann series
\begin{equation}
\label{2.8.12}
(I-K_\infty)^{-1}
=
\sum_{n=0}^\infty K_\infty^n
\end{equation}
converges in \(\mathcal L(L^\infty,L^\infty)\). In particular, \(I-K_\infty\) is
invertible.

We now prove that
\[
K_\lambda\to K_\infty
\]
uniformly on compact subsets of \(L^\infty(\mathbb R)\) as \(\lambda\to+\infty\). Let
\(C\subset L^\infty(\mathbb R)\) be compact, and set
\[
\mathcal H:=\{\overline u\,a:\ a\in C\}\subset L^2(\mathbb R).
\]
The map
\[
a\mapsto \overline u\,a
\]
is continuous from \(L^\infty\) to \(L^2\). Hence \(\mathcal H\) is compact in
\(L^2(\mathbb R)\). For \(h\in L^2\), we have
\[
\widehat{\Pi(h e^{i\lambda\cdot})}(\xi)
=
\mathbf 1_{\xi\ge0}\widehat h(\xi-\lambda).
\]
Therefore
\begin{equation}
\label{2.8.13}
\begin{aligned}
\|\Pi(h e^{i\lambda\cdot})-h e^{i\lambda\cdot}\|_{L^2}^2
&=
\int_{\xi<0}|\widehat h(\xi-\lambda)|^2\,d\xi  \\
&=
\int_{\eta<-\lambda}|\widehat h(\eta)|^2\,d\eta .
\end{aligned}
\end{equation}
Since \(\mathcal H\) is compact in \(L^2\), its Fourier tails are uniformly small:
\begin{equation}
\label{2.8.14}
\sup_{h\in\mathcal H}
\int_{\eta<-\lambda}|\widehat h(\eta)|^2\,d\eta
\longrightarrow0
\qquad\text{as }\lambda\to+\infty.
\end{equation}
Hence
\begin{equation}
\label{2.8.15}
\sup_{a\in C}
\left\|
\Pi(\overline u\,e^{i\lambda\cdot}a)
-
\overline u\,e^{i\lambda\cdot}a
\right\|_{L^2}
\longrightarrow0.
\end{equation}
Using Cauchy-Schwarz inequality,
\[
\begin{aligned}
\|(K_\lambda-K_\infty)a\|_{L^\infty}
&\le
\int_{\mathbb R}
|u(y)|
\left|
\Pi(\overline u\,e^{i\lambda\cdot}a)(y)
-
\overline u(y)e^{i\lambda y}a(y)
\right|\,dy  \\
&\le
\|u\|_{L^2}
\left\|
\Pi(\overline u\,e^{i\lambda\cdot}a)
-
\overline u\,e^{i\lambda\cdot}a
\right\|_{L^2}.
\end{aligned}
\]
Taking the supremum over \(a\in C\), and using \eqref{2.8.15}, we get
\begin{equation}
\label{2.8.16}
\sup_{a\in C}
\|(K_\lambda-K_\infty)a\|_{L^\infty}
\longrightarrow0
\qquad\text{as }\lambda\to+\infty.
\end{equation}

\medskip
\noindent\textbf{Step 4: uniform invertibility for large \(\lambda\).}
We claim that there exist \(\Lambda_0>0\) and \(C_\infty<\infty\) such that
\begin{equation}
\label{2.8.17}
\sup_{\lambda\ge\Lambda_0}
\|(I-K_\lambda)^{-1}\|_{\mathcal L(L^\infty,L^\infty)}
\le C_\infty .
\end{equation}
Suppose not. Then there exist \(\lambda_n\to+\infty\) and \(b_n\in L^\infty\) such that
\begin{equation}
\label{2.8.18}
\|b_n\|_{L^\infty}=1,
\qquad
\|(I-K_{\lambda_n})b_n\|_{L^\infty}\to0.
\end{equation}
By collective compactness, the sequence \(\{K_{\lambda_n}b_n\}\) is relatively compact in
\(L^\infty\). Since
\[
b_n=K_{\lambda_n}b_n+(I-K_{\lambda_n})b_n,
\]
the sequence \(\{b_n\}\) is also relatively compact. Passing to a subsequence, we may
assume
\begin{equation}
\label{2.8.19}
b_n\to b
\quad\text{in }L^\infty.
\end{equation}
Set
\[
C:=\{b\}\cup\{b_n:n\ge1\}.
\]
Then \(C\) is compact in \(L^\infty\). By \eqref{2.8.16}, applied to this compact set,
\begin{equation}
\label{2.8.20}
\|(K_{\lambda_n}-K_\infty)b_n\|_{L^\infty}\to0.
\end{equation}
Therefore
\[
\begin{aligned}
(I-K_\infty)b_n
&=
(I-K_{\lambda_n})b_n
+
(K_{\lambda_n}-K_\infty)b_n
\longrightarrow0
\quad\text{in }L^\infty.
\end{aligned}
\]
Passing to the limit in \(L^\infty\), we get
\[
(I-K_\infty)b=0.
\]
Since \(I-K_\infty\) is invertible, \(b=0\). This contradicts
\[
\|b\|_{L^\infty}
=
\lim_{n\to\infty}\|b_n\|_{L^\infty}
=1.
\]
Thus \eqref{2.8.17} holds.

\medskip
\noindent\textbf{Step 5: conclusion.}
Combining the low-frequency bound \eqref{2.8.7} with the high-frequency bound
\eqref{2.8.17}, we obtain
\[
\sup_{\lambda\ge0}
\|(I-K_\lambda)^{-1}\|_{\mathcal L(L^\infty,L^\infty)}
<\infty.
\]
Since
\[
a_\lambda=(I-K_\lambda)^{-1}1,
\]
we conclude that
\[
\sup_{\lambda\ge0}
\|a(\cdot,\lambda)\|_{L^\infty}
<\infty.
\]
Finally, for \(\lambda>0\),
\[
\|m_-(\cdot,\lambda)\|_{L^\infty}
=
\|e^{i\lambda\cdot}a(\cdot,\lambda)\|_{L^\infty}
=
\|a(\cdot,\lambda)\|_{L^\infty}.
\]
Therefore
\[
\sup_{\lambda>0}
\|m_-(\cdot,\lambda)\|_{L^\infty}
<\infty.
\]
This completes the proof.
\end{proof}
This completes the construction and uniform control of the normalized generalized
eigenfunctions. We now use them to build the distorted Fourier transform associated with
\(L_u\).
\subsection{The distorted Fourier transform}\label{sec 2.2}
From Proposition \ref{prop:generalized_eigenfunctions}, for $u \in L_{+}^{2}(\mathbb R)$, there exists a
unique $m_{-} \in \mathscr{M}_u(\lambda)$ such that $\ell_{m_{-}}(-\infty)=1$. Given $f \in L_{+}^1(\mathbb{R})$, we can define
\begin{align*}
\mathcal{F}_{d,u} f (\lambda) : = \widetilde{f}(\lambda):=\int_{\mathbb{R}} f(x) \overline{m_{-}(x, \lambda)} d x, \quad \lambda > 0.
\end{align*}
If moreover $f \in L^2(\mathbb{R})$, then we can show that $\widetilde{f} \in L^2((0,+\infty))$. Denote by $\mathcal{H}_c\left(L_u\right)$ the continuous space of $L_u$, namely the space of those $f \in L_{+}^2(\mathbb{R})$ having a spectral measure with no pure point part. Denote by $P_c(L_u)$ the orthogonal projector from $L_{+}^2$ to $\mathcal{H}_c\left(L_u\right)$.
\begin{proposition}[Plancherel's identity]
\label{prop 2.6}
For $u \in L_{+}^{2}(\mathbb R)$, the operator $f \mapsto \mathcal{F}_{d,u} f: = \widetilde{f}$ extends to $L_{+}^2(\mathbb{R})$ with the identity
\begin{equation}
\label{3.3}
\int_{\mathbb{R}}\left|f(x)\right|^2 d x= \int_{\mathbb{R}}\left|P_c(L_u) f(x)\right|^2 d x=\frac{1}{2 \pi} \int_0^{\infty}|\widetilde{f}(\lambda)|^2 d \lambda.
\end{equation}
\end{proposition}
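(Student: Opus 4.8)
The first equality in \eqref{3.3} is immediate: the preceding proposition shows that $L_u$ has no eigenvalues, so $\mathcal H_c(L_u)=L_+^2(\mathbb R)$ and $P_c(L_u)=\mathrm{Id}$. Thus the whole content of the statement is the Plancherel identity $\|f\|_{L^2}^2=\frac{1}{2\pi}\int_0^\infty|\widetilde f(\lambda)|^2\,d\lambda$. Since $L_+^1(\mathbb R)\cap L_+^2(\mathbb R)$ is dense in $L_+^2(\mathbb R)$, it suffices to prove this for $f$ in a convenient dense class and then extend $\mathcal F_d$ by continuity. It is also convenient to establish the identity first for $u\in\mathcal S_+(\mathbb R)$, where $m_-(\cdot,\lambda)$ is smooth, uniformly controlled, and has the threshold behaviour of Lemma \ref{lemma 2.5}, and then to pass to general $u\in L_+^{2,\alpha}(\mathbb R)$ by approximating $u$ in $L_+^{2,\alpha}(\mathbb R)$ by Schwartz functions: since $a(\cdot,\lambda)=(\mathrm{Id}-K_\lambda)^{-1}1$, and hence $m_-(\cdot,\lambda)$ and $\widetilde f(\lambda)$, depends continuously on $u$ (by the second resolvent identity, exactly as in the proof of Lemma \ref{lemma 2.5}), one can pass the $L^2$-identity to the limit provided one has enough uniformity in $\lambda$.

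The argument rests on two ingredients. The first is that $\mathcal F_d$ intertwines $L_u$ with multiplication by $\lambda$: for $f$ in the domain of $L_u$ and decaying at infinity, an integration by parts --- whose boundary terms vanish since $f(x)\to0$ and $m_-(\cdot,\lambda)$ is bounded --- together with $(L_u-\lambda)m_-(\cdot,\lambda)=0$ gives $\widetilde{L_uf}(\lambda)=\lambda\,\widetilde f(\lambda)$. In view of the one-dimensionality of $\mathscr M_u(\lambda)$ this already identifies $\mathcal F_d$ with the multiplicity-free spectral representation of $L_u$ up to a multiplier, so the real issue is to show that the underlying spectral measure is $\tfrac{d\lambda}{2\pi}$ and that $\mathcal F_d$ is onto, i.e., that the Plancherel constant is exactly $\tfrac{1}{2\pi}$, as it is in the undistorted case $u=0$ (where $m_-(x,\lambda)=\mathrm e^{i\lambda x}$, $\mathcal F_d=\mathcal F_0$, and the identity is classical Plancherel on $L_+^2(\mathbb R)$).

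The second, and main, ingredient is an explicit computation of the resolvent $(L_u-z)^{-1}$ combined with Stone's formula, in the spirit of \cite{1,27} and \cite[Section 4]{78}. For $\operatorname{Im}z\neq0$ the equation $(L_u-z)g=f$ is equivalent, exactly as in \eqref{3.2}, to an integral equation $g=\mathrm e^{izx}b$ with $b-K_z(b)=(\text{a source built from }f)$, where $K_z$ is the complex-parameter analogue of $K_\lambda$; the Fredholm argument of Proposition \ref{prop:generalized_eigenfunctions} carries over (injectivity of $\mathrm{Id}-K_z$ being even easier, since $\operatorname{Im}z\neq0$ forces any $L^2$-solution of $(L_u-z)m=0$ to vanish), so $\mathrm{Id}-K_z$ is invertible and one obtains an explicit Green kernel $G(x,y;z)$ assembled from the Jost-type solution $m_-(\cdot,z)$, the companion solution that decays at the relevant end, and a ``Jost function'' $\theta(z)$ in the denominator. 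Passing to the boundary values $z\to\lambda\pm i0$ and invoking the identity $|\theta(\lambda)|=|\ell_{m_-}(+\infty)|=|\ell_{m_-}(-\infty)|=1$ already proved in Proposition \ref{prop:generalized_eigenfunctions} --- the absence of reflection for this chiral equation, which is precisely what fixes the constant --- Stone's formula yields $\frac{1}{2\pi i}\big\langle[(L_u-\lambda-i0)^{-1}-(L_u-\lambda+i0)^{-1}]f,f\big\rangle=\frac{1}{2\pi}|\widetilde f(\lambda)|^2$. Integrating over $\lambda\in(0,\infty)$ and using that $\sigma(L_u)=[0,\infty)$ with no point part (the preceding proposition) and no singular-continuous part (a byproduct of the limiting absorption estimates) gives $\|f\|_{L^2}^2=\frac{1}{2\pi}\int_0^\infty|\widetilde f(\lambda)|^2\,d\lambda$.

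The hard part is the limiting absorption principle underlying the last step: one must show that $(L_u-z)^{-1}$ admits boundary values as $z\to\lambda\pm i0$ in suitable weighted spaces, with enough uniformity to legitimize Stone's formula, to exclude singular-continuous spectrum, and --- when one proceeds by approximation in $u$ --- to upgrade weak convergence of the spectral measures to $L^2$-convergence of $\widetilde f$. The two delicate regimes are the threshold $\lambda\to0^+$, where $|x|^\alpha u\in L^2$ with $\alpha$ small provides only weak decay, so one needs uniform invertibility of $\mathrm{Id}-K_z$ and a uniform lower bound $|\theta(z)|\gtrsim1$ down to $z=0$ (in the spirit of Lemma \ref{lemma 2.5}), and the high-frequency regime $\lambda\to+\infty$, where one needs $uT_{\bar u}$ to be uniformly negligible compared with $D$. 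Once these uniform resolvent estimates are in place, the remaining steps are routine.
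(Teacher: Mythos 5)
Your proposal follows essentially the same route as the paper: the spectral theorem/Stone's formula for $L_u$ (with $P_c(L_u)=\mathrm{Id}$ since there are no eigenvalues), boundary values $g_{\pm}$ of the resolvent obtained from the Fredholm analysis already set up for $K_\lambda$, and identification of the jump $g_+-g_-$ as a multiple of $m_-(\cdot,\lambda)$, with the normalization $\ell_{m_-}(-\infty)=1$, $|\ell_{m_-}(+\infty)|=1$ fixing the Plancherel constant $\tfrac{1}{2\pi}$. The paper is somewhat leaner on the two points you flag as delicate: it works directly with $u\in L^{2,\alpha}_+(\mathbb{R})$ and $f\in L^1_+\cap L^2_+$ (no approximation of $u$ by Schwartz functions, hence no need for uniformity in $\lambda$ to pass to a limit in $u$), writes $g_{\pm}^{\varepsilon}=(D-\lambda\mp i\varepsilon)^{-1}\bigl(\mathrm{Id}+uT_{\bar u}(D-\lambda\mp i\varepsilon)^{-1}\bigr)^{-1}f$ and gets convergence of these boundary values by the same compactness arguments as for $K_\lambda$ (uniformly on compact $\lambda$-sets), and then computes $g_+-g_-=i\widetilde f(\lambda)\,m_-(\cdot,\lambda)$ directly via a Wronskian-type identity, so no Green kernel with a Jost-function denominator or separate reflectionless step is required.
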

\begin{proof}
We first prove the identity for
\[
f\in L^1(\mathbb R)\cap L^2_+(\mathbb R).
\]
The general case will then follow by density. Throughout the proof the scalar
product is linear in the first variable.

For \(\lambda>0\) and \(\varepsilon>0\), set
\[
g_+^\varepsilon(\cdot,\lambda)
:=
(L_u-\lambda-i\varepsilon)^{-1}f,
\qquad
g_-^\varepsilon(\cdot,\lambda)
:=
(L_u-\lambda+i\varepsilon)^{-1}f .
\]
We shall need the following local limiting absorption statement.

Let \(I\Subset(0,\infty)\). For \(h\in L^1(\mathbb R)\), define
\[
(R_{0,\varepsilon}^+(\lambda)h)(x)
:=
i\int_{-\infty}^x
e^{i(\lambda+i\varepsilon)(x-y)}h(y)\,dy,
\]
and
\[
(R_{0,\varepsilon}^-(\lambda)h)(x)
:=
-i\int_x^{+\infty}
e^{i(\lambda-i\varepsilon)(x-y)}h(y)\,dy.
\]
For \(\varepsilon=0\) we write \(R_{0,0}^\pm(\lambda)\) for the corresponding
oscillatory boundary operators. Then
\[
(D-\lambda\mp i\varepsilon)R_{0,\varepsilon}^{\pm}(\lambda)h=h
\]
for \(\varepsilon>0\), and
\[
(D-\lambda)R_{0,0}^{\pm}(\lambda)h=h
\]
in the sense of distributions. Moreover,
\[
\|R_{0,\varepsilon}^{\pm}(\lambda)h\|_{L^\infty}
\leq
\|h\|_{L^1},
\qquad
\lambda\in I,\quad 0\leq\varepsilon\leq1.
\]
For every fixed \(h\in L^1(\mathbb R)\),
\[
R_{0,\varepsilon}^{\pm}(\lambda)h
\longrightarrow
R_{0,0}^{\pm}(\lambda)h
\]
locally uniformly in \(x\), uniformly for \(\lambda\in I\), as
\(\varepsilon\to0_+\).

Now put
\[
A_{\varepsilon}^{\pm}(\lambda)
:=
uT_{\bar u}R_{0,\varepsilon}^{\pm}(\lambda)
=
u\Pi\bigl(\bar u\,R_{0,\varepsilon}^{\pm}(\lambda)\,\cdot\bigr),
\qquad
0\leq\varepsilon\leq1 .
\]
Then \(A_{\varepsilon}^{\pm}(\lambda)\) is bounded on \(L^1(\mathbb R)\), with
\[
\|A_{\varepsilon}^{\pm}(\lambda)h\|_{L^1}
\leq
\|u\|_{L^2}^2\|h\|_{L^1}.
\]
Furthermore, the family
\[
\{A_{\varepsilon}^{\pm}(\lambda):\lambda\in I,\ 0\leq\varepsilon\leq1\}
\]
is collectively compact on \(L^1(\mathbb R)\). Indeed, approximate \(u\) in
\(L^2\) by a compactly supported smooth function \(v\). The error is small in
the operator norm \(L^1\to L^1\), since
\[
\begin{aligned}
&\left\|
u\Pi\bigl(\bar u\,R_{0,\varepsilon}^{\pm}(\lambda)h\bigr)
-
v\Pi\bigl(\bar v\,R_{0,\varepsilon}^{\pm}(\lambda)h\bigr)
\right\|_{L^1}  \\
&\leq
\|u-v\|_{L^2}\|u\|_{L^2}
\|R_{0,\varepsilon}^{\pm}(\lambda)h\|_{L^\infty}
+
\|v\|_{L^2}\|u-v\|_{L^2}
\|R_{0,\varepsilon}^{\pm}(\lambda)h\|_{L^\infty}  \\
&\leq
C_u\|u-v\|_{L^2}\|h\|_{L^1}.
\end{aligned}
\]
For compactly supported \(v\), the operator
\[
h\mapsto \bar v\,R_{0,\varepsilon}^{\pm}(\lambda)h
\]
is compact from \(L^1\) to \(L^2\), uniformly for
\(\lambda\in I\) and \(0\leq\varepsilon\leq1\). This follows by approximating
its kernel in \(L^2_x\), uniformly in the \(y\)-variable and in the parameters
\((\lambda,\varepsilon)\). Since \(v\Pi:L^2\to L^1\) is bounded, the collective
compactness follows.

We shall also use that, for every fixed \(h\in L^1(\mathbb R)\),
\[
A_{\varepsilon}^{\pm}(\lambda)h
\longrightarrow
A_{0}^{\pm}(\lambda)h
\quad\text{in }L^1,
\]
uniformly for \(\lambda\in I\), as \(\varepsilon\to0_+\). Indeed,
\[
\begin{aligned}
\|A_{\varepsilon}^{\pm}(\lambda)h
-
A_0^\pm(\lambda)h\|_{L^1}
&\leq
\|u\|_{L^2}
\left\|
\bar u
\left(
R_{0,\varepsilon}^{\pm}(\lambda)h
-
R_{0,0}^{\pm}(\lambda)h
\right)
\right\|_{L^2},
\end{aligned}
\]
and the right-hand side tends to zero by local uniform convergence of the free
boundary resolvents and by the \(L^2\)-tail smallness of \(u\). The same argument
gives strong continuity of \(A_{\varepsilon}^{\pm}(\lambda)\) in the parameters
\((\lambda,\varepsilon)\).

We next prove that
\[
I+A_{\varepsilon}^{\pm}(\lambda)
\]
is injective on \(L^1(\mathbb R)\), for every
\(\lambda\in I\) and \(0\leq\varepsilon\leq1\). Suppose first that
\[
h+A_{\varepsilon}^{+}(\lambda)h=0,
\qquad
g:=R_{0,\varepsilon}^{+}(\lambda)h.
\]
Then
\[
h=-uT_{\bar u}g,
\]
and hence \(h\) has non-negative Fourier support. Therefore \(g\) also has
non-negative Fourier support. For \(\varepsilon>0\) this follows immediately
from the Fourier multiplier representation of \(R_{0,\varepsilon}^{+}(\lambda)\);
for \(\varepsilon=0\), one tests
\[
(D-\lambda)g=h
\]
against functions whose Fourier supports are compactly contained in
\((-\infty,0)\).

If \(\varepsilon=0\), then \(g\in L^\infty_+(\mathbb R)\) and
\[
(L_u-\lambda)g=0
\]
in the sense of distributions. Moreover,
\[
e^{-i\lambda x}g(x)
=
i\int_{-\infty}^x e^{-i\lambda y}h(y)\,dy,
\]
so that
\[
\ell_g(-\infty)=0.
\]
By Proposition \(\ref{prop:generalized_eigenfunctions}\), the element of
\(\mathscr M_u(\lambda)\) with \(\ell_g(-\infty)=0\) is zero. Thus \(g=0\), and
then \(h=(D-\lambda)g=0\).

If \(0<\varepsilon\leq1\), then \(g\in L^2_+(\mathbb R)\), since
\(R_{0,\varepsilon}^{+}(\lambda)\) has an exponentially decaying kernel. Let
\[
z_+:=\lambda+i\varepsilon .
\]
The equation becomes
\[
(D-z_+)g=-uT_{\bar u}g.
\]
We claim that \(g\in\operatorname{Dom}(L_u)\). Let
\(P_R=\chi(D/R)\), where \(\chi\in C_c^\infty([0,\infty))\),
\(0\leq\chi\leq1\), and \(\chi(\xi/R)\uparrow1\) for \(\xi\ge0\). Testing
\[
Dg+uT_{\bar u}g=z_+g
\]
against \(P_Rg\), taking real parts, and letting \(R\to+\infty\), gives
\[
\int_0^\infty
\xi|\widehat g(\xi)|^2\,\frac{d\xi}{2\pi}
+
\|T_{\bar u}g\|_{L^2}^2
=
\lambda\|g\|_{L^2}^2
<+\infty.
\]
Hence \(g\in H^{1/2}_+(\mathbb R)\). The distributional equation then gives,
by density in \(H^{1/2}_+\),
\[
\mathcal Q_u(g,\varphi)
=
z_+\langle g,\varphi\rangle,
\qquad
\varphi\in H^{1/2}_+(\mathbb R).
\]
Thus \(g\in\operatorname{Dom}(L_u)\) and \(L_ug=z_+g\). Since \(L_u\) is
self-adjoint and \(z_+\notin\mathbb R\), this implies \(g=0\), and hence
\(h=0\).

The proof for \(I+A_{\varepsilon}^{-}(\lambda)\) is identical, with
\(z_-:=\lambda-i\varepsilon\). In the boundary case \(\varepsilon=0\), one has
\[
g:=R_{0,0}^{-}(\lambda)h,
\qquad
\ell_g(+\infty)=0.
\]
Since \(g\in\mathscr M_u(\lambda)\) and every
\(m\in\mathscr M_u(\lambda)\) satisfies
\[
|\ell_m(+\infty)|=|\ell_m(-\infty)|,
\]
we also get \(\ell_g(-\infty)=0\), and therefore \(g=0\).

Thus \(I+A_{\varepsilon}^{\pm}(\lambda)\) is injective for all
\(\lambda\in I\) and \(0\leq\varepsilon\leq1\). Since
\(A_{\varepsilon}^{\pm}(\lambda)\) is compact on \(L^1\), Fredholm's alternative
gives invertibility. The collective compactness and the strong continuity in the
parameters imply the local uniform bound
\[
\sup_{\lambda\in I,\ 0\leq\varepsilon\leq1}
\left\|
(I+A_{\varepsilon}^{\pm}(\lambda))^{-1}
\right\|_{\mathcal L(L^1,L^1)}
<\infty .
\]
Moreover, for every fixed \(r\in L^1(\mathbb R)\),
\[
(I+A_{\varepsilon}^{\pm}(\lambda))^{-1}r
\longrightarrow
(I+A_0^\pm(\lambda))^{-1}r
\quad\text{in }L^1,
\]
locally uniformly for \(\lambda\in I\), as \(\varepsilon\to0_+\).

Applying this to \(r=f\), define
\[
h_\pm^\varepsilon(\cdot,\lambda)
:=
(I+A_\varepsilon^\pm(\lambda))^{-1}f,
\qquad
h_\pm(\cdot,\lambda)
:=
(I+A_0^\pm(\lambda))^{-1}f.
\]
Then
\[
h_\pm^\varepsilon(\cdot,\lambda)
\to
h_\pm(\cdot,\lambda)
\quad\text{in }L^1,
\]
locally uniformly for \(\lambda\in I\). Put
\[
\Gamma_\pm^\varepsilon(\cdot,\lambda)
:=
R_{0,\varepsilon}^{\pm}(\lambda)h_\pm^\varepsilon(\cdot,\lambda).
\]
Since
\[
h_\pm^\varepsilon
+
uT_{\bar u}\Gamma_\pm^\varepsilon
=
f,
\]
we have
\[
(L_u-\lambda\mp i\varepsilon)\Gamma_\pm^\varepsilon=f
\]
in the sense of distributions. The same support and cutoff argument as above
shows that, for \(0<\varepsilon\leq1\),
\[
\Gamma_\pm^\varepsilon\in\operatorname{Dom}(L_u).
\]
Hence, by uniqueness of the resolvent of the self-adjoint operator \(L_u\),
\[
\Gamma_+^\varepsilon
=
(L_u-\lambda-i\varepsilon)^{-1}f
=
g_+^\varepsilon,
\]
and
\[
\Gamma_-^\varepsilon
=
(L_u-\lambda+i\varepsilon)^{-1}f
=
g_-^\varepsilon .
\]
Let
\[
g_\pm(\cdot,\lambda)
:=
R_{0,0}^{\pm}(\lambda)h_\pm(\cdot,\lambda).
\]
Then, for every \(\lambda\in I\),
\[
g_\pm^\varepsilon(\cdot,\lambda)
\longrightarrow
g_\pm(\cdot,\lambda)
\]
locally uniformly in \(x\), and also in duality against \(L^1\)-functions. In
particular,
\[
\left\langle g_\pm^\varepsilon(\cdot,\lambda),f\right\rangle
\longrightarrow
\left\langle g_\pm(\cdot,\lambda),f\right\rangle .
\]
The convergence is dominated on \(I\), since
\[
\|g_\pm^\varepsilon(\cdot,\lambda)\|_{L^\infty}
\leq
C_I\|f\|_{L^1}.
\]
The limiting functions satisfy
\[
h_\pm=f-uT_{\bar u}g_\pm.
\]
Since \(f\in L^2_+\) and \(uT_{\bar u}g_\pm\) has non-negative Fourier support,
the functions \(h_\pm\) have non-negative Fourier support. Hence
\[
g_\pm=R_{0,0}^{\pm}(\lambda)h_\pm
\in L^\infty_+(\mathbb R).
\]
Moreover,
\[
(L_u-\lambda)g_\pm=f
\]
in the sense of distributions, and therefore
\[
(L_u-\lambda)(g_+-g_-)=0.
\]
Thus
\[
g_+-g_-\in\mathscr M_u(\lambda).
\]
From the asymptotics of the free boundary resolvents,
\[
e^{-i\lambda x}g_+(x,\lambda)
=
i\int_{-\infty}^x e^{-i\lambda y}h_+(y,\lambda)\,dy,
\]
and
\[
e^{-i\lambda x}g_-(x,\lambda)
=
-i\int_x^{+\infty}e^{-i\lambda y}h_-(y,\lambda)\,dy,
\]
we get
\[
\ell_{g_+}(-\infty)=0,
\qquad
\ell_{g_-}(+\infty)=0.
\]
Since \(m_-(\cdot,\lambda)\) is normalized by
\[
\ell_{m_-}(-\infty)=1,
\]
and since \(g_+-g_-\in\mathscr M_u(\lambda)\), we obtain
\[
g_+-g_-=-\ell_{g_-}(-\infty)m_-(\cdot,\lambda).
\]
It remains to compute the coefficient. Since
\[
(L_u-\lambda)g_-=f,
\qquad
(L_u-\lambda)m_-=0,
\]
we have, after integrating by parts on \((-R,R)\),
\[
\begin{aligned}
&\int_{-R}^R
\left(
g_-\overline{Dm_-}
-
Dg_-\,\overline{m_-}
\right)\,dx  \\
&=
-\int_{-R}^R f\,\overline{m_-}\,dx
+
\int_{-R}^R
\left[
uT_{\bar u}g_-\,\overline{m_-}
-
g_-\,\overline{uT_{\bar u}m_-}
\right]\,dx .
\end{aligned}
\]
Letting \(R\to+\infty\), the last two terms cancel. Indeed,
\[
\int_{\mathbb R}
uT_{\bar u}g_-\,\overline{m_-}\,dx
=
\int_{\mathbb R}
T_{\bar u}g_-\,\overline{T_{\bar u}m_-}\,dx
=
\int_{\mathbb R}
g_-\,\overline{uT_{\bar u}m_-}\,dx .
\]
On the other hand,
\[
g_-\overline{Dm_-}-Dg_-\,\overline{m_-}
=
i\partial_x(g_-\overline{m_-}).
\]
Using
\[
\ell_{g_-}(+\infty)=0,
\qquad
\ell_{m_-}(-\infty)=1,
\]
we obtain
\[
\lim_{R\to+\infty}
\int_{-R}^R
\left(
g_-\overline{Dm_-}
-
Dg_-\,\overline{m_-}
\right)\,dx
=
-i\ell_{g_-}(-\infty).
\]
Therefore
\[
-i\ell_{g_-}(-\infty)
=
-\int_{\mathbb R} f(x)\overline{m_-(x,\lambda)}\,dx
=
-\widetilde f(\lambda).
\]
Hence
\[
g_+(x,\lambda)-g_-(x,\lambda)
=
i\widetilde f(\lambda)m_-(x,\lambda).
\]
Consequently,
\[
\left\langle g_+(\cdot,\lambda)-g_-(\cdot,\lambda),f\right\rangle
=
i|\widetilde f(\lambda)|^2 .
\]

Now let \(\chi\in C_c^\infty(0,\infty)\), and choose \(I\Subset(0,\infty)\)
such that \(\operatorname{supp}\chi\subset I\). By Stone's formula,
\[
\left\langle \chi(L_u)f,f\right\rangle
=
\frac{1}{2\pi i}
\lim_{\varepsilon\to0_+}
\int_0^\infty
\chi(\lambda)
\left\langle
g_+^\varepsilon(\cdot,\lambda)
-
g_-^\varepsilon(\cdot,\lambda),
f
\right\rangle
\,d\lambda .
\]
Using the convergence in duality proved above and dominated convergence on the
support of \(\chi\), we get
\[
\left\langle \chi(L_u)f,f\right\rangle
=
\frac{1}{2\pi}
\int_0^\infty
\chi(\lambda)|\widetilde f(\lambda)|^2\,d\lambda .
\]
By applying this identity to \(f+\zeta g\) and polarizing, we also obtain, for
all \(f,g\in L^1(\mathbb R)\cap L^2_+(\mathbb R)\),
\begin{equation}
\label{eq:local_spectral_identity}
\left\langle \chi(L_u)f,g\right\rangle
=
\frac{1}{2\pi}
\int_0^\infty
\chi(\lambda)\widetilde f(\lambda)
\overline{\widetilde g(\lambda)}\,d\lambda ,
\qquad
\chi\in C_c^\infty(0,\infty).
\end{equation}

Choose a sequence
\[
0\leq \chi_n\in C_c^\infty(0,\infty),
\qquad
\chi_n(\lambda)\uparrow 1
\quad\text{for every }\lambda>0.
\]
Since \(L_u\) has no eigenvalues, the spectral measure of \(f\) has no atom at
\(\lambda=0\). Hence
\[
\chi_n(L_u)f\to f
\quad\text{in }L^2.
\]
Therefore
\[
\|f\|_{L^2}^2
=
\lim_{n\to\infty}
\left\langle \chi_n(L_u)f,f\right\rangle
=
\frac{1}{2\pi}
\lim_{n\to\infty}
\int_0^\infty
\chi_n(\lambda)|\widetilde f(\lambda)|^2\,d\lambda .
\]
By monotone convergence,
\[
\|f\|_{L^2}^2
=
\frac{1}{2\pi}
\int_0^\infty
|\widetilde f(\lambda)|^2\,d\lambda .
\]
This proves \eqref{3.3} for every
\(f\in L^1(\mathbb R)\cap L^2_+(\mathbb R)\).

Finally, \(L^1(\mathbb R)\cap L^2_+(\mathbb R)\) is dense in
\(L^2_+(\mathbb R)\). The estimate just proved gives
\[
\|\mathcal F_{d,u}f\|_{L^2(0,\infty)}
=
(2\pi)^{1/2}\|f\|_{L^2}
\]
on this dense subspace. Thus \(\mathcal F_{d,u}\) extends uniquely and
continuously to \(L^2_+(\mathbb R)\), and the same identity holds for all
\(f\in L^2_+(\mathbb R)\). By density, the polarized identity
\eqref{eq:local_spectral_identity} also holds for all
\(f,g\in L^2_+(\mathbb R)\).

Since \(L_u\) has no eigenvalues, its pure point spectral subspace is trivial,
and therefore
\[
P_c(L_u)=\operatorname{Id}
\quad\text{on }L^2_+(\mathbb R).
\]
Hence, for every \(f\in L^2_+(\mathbb R)\),
\[
\int_{\mathbb R}|f(x)|^2\,dx
=
\int_{\mathbb R}|P_c(L_u)f(x)|^2\,dx
=
\frac{1}{2\pi}
\int_0^\infty|\widetilde f(\lambda)|^2\,d\lambda .
\]
The proof is complete.
\end{proof}
The isometry immediately gives the corresponding inner-product identity. We shall also
use the compatibility of the transform with the Lax operator on its natural domain.
\begin{remark}
By the polarization identity, the isometry property implies preservation of the inner product, i.e., for any $f, g \in L_{+}^2(\mathbb{R})$, we have
\begin{equation}
\label{2.4}
\left\langle f, g \right\rangle_{L^2(\mathbb{R})} = \frac{1}{2\pi}\langle \widetilde{f}, \widetilde{g} \rangle_{L^2(0,\infty)}.
\end{equation}
\end{remark}
\begin{remark}
\label{remark 2.14}
Let \(u\in L^{2}_+(\mathbb R)\). Then, for every
\(f\in \operatorname{Dom}(L_u)\), one has
\[
\mathcal F_{d,u}(L_u f)(\lambda)
=
\lambda \mathcal F_{d,u}f(\lambda)
\]
as an identity in \(L^2(0,\infty)\).

Indeed, set
\[
\mathcal U:=\frac{1}{\sqrt{2\pi}}\mathcal F_{d,u}.
\]
By Proposition \(\ref{prop 2.6}\), \(\mathcal U\) is an isometry from
\(L^2_+(\mathbb R)\) into \(L^2(0,\infty)\). Let
\[
\mathcal K:=\mathcal U L^2_+(\mathbb R)
\]
be its closed range.

We first show that, for every \(\chi\in C_c^\infty(0,\infty)\),
\begin{equation}
\label{eq:compact_functional_calculus}
\mathcal U\chi(L_u)=M_\chi\mathcal U,
\end{equation}
where \(M_\chi\) denotes multiplication by \(\chi(\lambda)\). By
\eqref{eq:local_spectral_identity}, for every \(f,g\in L^2_+(\mathbb R)\),
\[
\left\langle
\mathcal U\chi(L_u)f,\mathcal Ug
\right\rangle_{L^2(0,\infty)}
=
\left\langle
M_\chi\mathcal Uf,\mathcal Ug
\right\rangle_{L^2(0,\infty)}.
\]
Hence
\[
\mathcal U\chi(L_u)f
=
P_{\mathcal K}M_\chi\mathcal Uf,
\]
where \(P_{\mathcal K}\) is the orthogonal projection onto \(\mathcal K\).

Assume first that \(\chi\) is real-valued. Applying
\eqref{eq:local_spectral_identity} with \(\chi^2\), we obtain
\[
\|M_\chi\mathcal Uf\|_{L^2(0,\infty)}^2
=
\left\langle \chi^2(L_u)f,f\right\rangle_{L^2}
=
\|\chi(L_u)f\|_{L^2}^2
=
\|\mathcal U\chi(L_u)f\|_{L^2(0,\infty)}^2 .
\]
Since \(\mathcal U\chi(L_u)f\) is the orthogonal projection of
\(M_\chi\mathcal Uf\) onto \(\mathcal K\), equality of the two norms implies
\[
M_\chi\mathcal Uf\in\mathcal K.
\]
Therefore
\[
\mathcal U\chi(L_u)f
=
M_\chi\mathcal Uf.
\]
By linearity, the same conclusion holds for complex-valued
\(\chi\in C_c^\infty(0,\infty)\). This proves
\eqref{eq:compact_functional_calculus}.

We now pass from compactly supported functions of \(L_u\) to the resolvent.
Choose \(\theta_n\in C_c^\infty(0,\infty)\) such that
\[
0\leq \theta_n\leq1,
\qquad
\theta_n(\lambda)=1\quad\text{for }1/n\leq\lambda\leq n,
\]
and set
\[
\rho_n(\lambda):=\frac{\theta_n(\lambda)}{\lambda-i}.
\]
Then \(\rho_n\in C_c^\infty(0,\infty)\),
\[
\rho_n(\lambda)\to\frac1{\lambda-i}
\quad\text{for every }\lambda>0,
\]
and the sequence \((\rho_n)\) is uniformly bounded. Since \(L_u\ge0\) and
\(0\) is not an eigenvalue of \(L_u\), the spectral theorem gives
\[
\rho_n(L_u)\to (L_u-i)^{-1}
\quad\text{strongly on }L^2_+(\mathbb R).
\]
On the other hand, by dominated convergence,
\[
M_{\rho_n}\mathcal Uh
\to
M_{(\lambda-i)^{-1}}\mathcal Uh
\quad\text{in }L^2(0,\infty)
\]
for every \(h\in L^2_+(\mathbb R)\). Passing to the limit in
\[
\mathcal U\rho_n(L_u)=M_{\rho_n}\mathcal U
\]
yields
\begin{equation}
\label{eq:resolvent_intertwining}
\mathcal U(L_u-i)^{-1}
=
M_{(\lambda-i)^{-1}}\mathcal U.
\end{equation}

Now let \(f\in\operatorname{Dom}(L_u)\) and put
\[
h:=(L_u-i)f\in L^2_+(\mathbb R).
\]
Then
\[
f=(L_u-i)^{-1}h.
\]
Using \eqref{eq:resolvent_intertwining}, we get
\[
\mathcal Uf
=
\mathcal U(L_u-i)^{-1}h
=
M_{(\lambda-i)^{-1}}\mathcal Uh.
\]
Equivalently,
\[
\mathcal Uh=(\lambda-i)\mathcal Uf
\]
in \(L^2(0,\infty)\). Since \(h=L_uf-if\), this gives
\[
\mathcal U(L_uf)-i\mathcal Uf
=
(\lambda-i)\mathcal Uf.
\]
Therefore
\[
\mathcal U(L_uf)=\lambda\mathcal Uf
\]
in \(L^2(0,\infty)\). Multiplying by \(\sqrt{2\pi}\), we obtain
\[
\mathcal F_{d,u}(L_u f)(\lambda)
=
\lambda \mathcal F_{d,u}f(\lambda)
\]
as an identity in \(L^2(0,\infty)\).
\end{remark}
\noindent \noindent
It remains to identify the range of the isometry. The next lemma proves that no spectral
subspace is missing: the distorted Fourier transform is onto \(L^2(0,\infty)\).
\begin{lemma}
\label{lemma 2.15}
For $u \in L_{+}^{2}(\mathbb R)$, the map $f \in L_{+}^2(\mathbb{R}) \mapsto \widetilde{f}:=\mathcal F_{d,u} f \in L^2(0, \infty)$ is onto.
\end{lemma}
\begin{proof}
We use the normalized distorted Fourier transform
\[
\mathcal U f:=\frac{1}{\sqrt{2\pi}}\mathcal F_{d,u} f .
\]
By Proposition \ref{prop 2.6}, \(\mathcal U\) is an isometry from
\(L^2_+(\mathbb R)\) into \(L^2(0,\infty)\). Hence its range
\[
\mathcal K:=\mathcal U L^2_+(\mathbb R)
\]
is a closed subspace of \(L^2(0,\infty)\).

We first prove that \(\mathcal K\) is a reducing subspace for the multiplication operator
\[
M_\lambda\varphi(\lambda):=\lambda\varphi(\lambda)
\]
on \(L^2(0,\infty)\). Let \(L_u\) be considered as a self-adjoint operator on
\(L^2_+(\mathbb R)\), and define
\[
T:=\mathcal U L_u\mathcal U^{-1}
\]
as an operator on the Hilbert space \(\mathcal K\), with domain
\[
\operatorname{Dom}(T):=\mathcal U\operatorname{Dom}(L_u).
\]
Since \(\mathcal U:L^2_+(\mathbb R)\to\mathcal K\) is unitary, \(T\) is self-adjoint on
\(\mathcal K\).

By Remark \ref{remark 2.14}, for every \(f\in\operatorname{Dom}(L_u)\),
\[
\mathcal U(L_uf)=M_\lambda\mathcal U f.
\]
Therefore, for every \(h\in\operatorname{Dom}(T)\),
\begin{equation}
\label{2.15.1}
Th=M_\lambda h.
\end{equation}
In particular,
\[
T\subset M_\lambda
\]
in the sense that \(T\) is the restriction of \(M_\lambda\) to the Hilbert space
\(\mathcal K\).

Let \(z\in\mathbb C\setminus\mathbb R\). Since \(T\) is self-adjoint on \(\mathcal K\),
the resolvent
\[
(T-z)^{-1}:\mathcal K\to\operatorname{Dom}(T)
\]
is well-defined and bounded. We claim that
\begin{equation}
\label{2.15.2}
(M_\lambda-z)^{-1}\mathcal K\subset\mathcal K.
\end{equation}
Indeed, if \(h\in\mathcal K\), set
\[
g:=(T-z)^{-1}h\in\operatorname{Dom}(T)\subset\mathcal K.
\]
Using \eqref{2.15.1}, we have
\[
(M_\lambda-z)g=(T-z)g=h.
\]
By uniqueness of the solution to
\[
(M_\lambda-z)g=h
\]
in \(L^2(0,\infty)\), it follows that
\[
g=(M_\lambda-z)^{-1}h.
\]
Thus \((M_\lambda-z)^{-1}h\in\mathcal K\), proving \eqref{2.15.2}.

Since \eqref{2.15.2} holds for both \(z\) and \(\overline z\), and
\[
\bigl((M_\lambda-z)^{-1}\bigr)^*
=
(M_\lambda-\overline z)^{-1},
\]
the subspace \(\mathcal K\) reduces the resolvent of \(M_\lambda\). Hence \(\mathcal K\)
reduces all spectral projections of \(M_\lambda\). Equivalently, the orthogonal
projection \(P_{\mathcal K}\) commutes with every multiplication operator
\[
\mathbf 1_\Omega(M_\lambda),
\qquad \Omega\subset(0,\infty)\ \text{Borel}.
\]
Since \(M_\lambda\) has multiplicity one, there exists a measurable set
\(E\subset(0,\infty)\) such that
\begin{equation}
\label{2.15.3}
\mathcal K=L^2(E),
\end{equation}
where \(L^2(E)\) is identified with the closed subspace of \(L^2(0,\infty)\) consisting of
functions vanishing a.e. on \((0,\infty)\setminus E\).

It remains to prove that \(E\) has full Lebesgue measure. Suppose, for contradiction, that
\[
|(0,\infty)\setminus E|>0.
\]
Then there exists a compact interval
\[
I\subset(0,\infty)
\]
such that
\begin{equation}
\label{2.15.4}
|I\setminus E|>0.
\end{equation}
Choose \(\psi\in\mathcal S_+(\mathbb R)\) such that
\begin{equation}
\label{2.15.5}
\widehat\psi(\lambda)=1
\qquad\text{for }\lambda\in I.
\end{equation}
For \(R>0\), define the left translate
\[
\psi_R(x):=\psi(x+R).
\]
Then \(\psi_R\in\mathcal S_+(\mathbb R)\), and, with our Fourier convention,
\begin{equation}
\label{2.15.6}
\widehat{\psi_R}(\lambda)=e^{iR\lambda}\widehat\psi(\lambda).
\end{equation}
We compare \(\mathcal F_{d,u}\psi_R\) with the ordinary Fourier transform of \(\psi_R\) on
\(I\). Recall that
\[
m_-(x,\lambda)=e^{i\lambda x}a(x,\lambda).
\]
Since
\[
a_\lambda=1+K_\lambda a_\lambda,
\]
and since Proposition \ref{prop:uniform_m} gives
\[
\sup_{\lambda\ge0}\|a(\cdot,\lambda)\|_{L^\infty}<\infty,
\]
we have, for \(x<0\),
\begin{equation}
\label{2.15.7}
\begin{aligned}
|a(x,\lambda)-1|
&\le
\int_{-\infty}^x
|u(y)|
\left|
\Pi\bigl(\overline u\,e^{i\lambda\cdot}a(\cdot,\lambda)\bigr)(y)
\right|\,dy  \\
&\le
\left(\int_{-\infty}^x |u(y)|^2\,dy\right)^{1/2}
\left\|
\Pi\bigl(\overline u\,e^{i\lambda\cdot}a(\cdot,\lambda)\bigr)
\right\|_{L^2}  \\
&\le
C_u
\left(\int_{-\infty}^x |u(y)|^2\,dy\right)^{1/2}.
\end{aligned}
\end{equation}
The right-hand side tends to \(0\) as \(x\to-\infty\), uniformly in \(\lambda\ge0\).
Hence
\[
a(x,\lambda)\to1
\qquad\text{as }x\to-\infty,
\]
uniformly for \(\lambda\in I\).

For \(\lambda\in I\), we compute
\begin{equation}
\label{2.15.8}
\begin{aligned}
\mathcal F_{d,u}\psi_R(\lambda)-\widehat{\psi_R}(\lambda)
&=
\int_{\mathbb R}
\psi_R(x)e^{-i\lambda x}
\bigl(\overline{a(x,\lambda)}-1\bigr)\,dx  \\
&=
e^{iR\lambda}
\int_{\mathbb R}
\psi(y)e^{-i\lambda y}
\bigl(\overline{a(y-R,\lambda)}-1\bigr)\,dy .
\end{aligned}
\end{equation}
We claim that
\begin{equation}
\label{2.15.9}
\sup_{\lambda\in I}
\left|
\mathcal F_{d,u}\psi_R(\lambda)-\widehat{\psi_R}(\lambda)
\right|
\longrightarrow0
\qquad\text{as }R\to+\infty.
\end{equation}
Indeed, fix \(\varepsilon>0\). Since \(\psi\in\mathcal S(\mathbb R)\) and \(a\) is
uniformly bounded, we can choose \(M>0\) such that the contribution of \(|y|>M\) in
\eqref{2.15.8} is \(<\varepsilon\), uniformly in \(R\) and \(\lambda\in I\). On
\(|y|\le M\), we have \(y-R\to-\infty\) uniformly as \(R\to+\infty\), and therefore
\[
\sup_{\substack{|y|\le M\\ \lambda\in I}}
|a(y-R,\lambda)-1|\to0
\]
by \eqref{2.15.7}. This proves \eqref{2.15.9}. Consequently,
\begin{equation}
\label{2.15.10}
\left\|
\mathcal F_{d,u}\psi_R-e^{iR\lambda}\widehat\psi
\right\|_{L^2(I)}
\longrightarrow0
\qquad\text{as }R\to+\infty.
\end{equation}
On the other hand, since
\[
\mathcal U\psi_R\in\mathcal K=L^2(E),
\]
we have
\begin{equation}
\label{2.15.11}
\mathcal F_{d,u}\psi_R(\lambda)=0
\qquad\text{for a.e. }\lambda\in(0,\infty)\setminus E.
\end{equation}
Restricting \eqref{2.15.10} to \(I\setminus E\), and using \eqref{2.15.5}, we obtain
\[
\left\|
e^{iR\lambda}\widehat\psi(\lambda)
\right\|_{L^2(I\setminus E)}
\longrightarrow0.
\]
But \(\widehat\psi(\lambda)=1\) on \(I\), so the left-hand side is exactly
\[
|I\setminus E|^{1/2},
\]
which is strictly positive by \eqref{2.15.4}. This contradiction shows that
\[
|(0,\infty)\setminus E|=0.
\]
Therefore
\[
\mathcal K=L^2(0,\infty).
\]
Hence \(\mathcal U\), and consequently \(\mathcal F_{d,u}\), is onto.
\end{proof}
The isometry and surjectivity show that the normalized distorted Fourier transform is
unitary. Consequently, \(L_u\) becomes the multiplication operator by \(\lambda\) in the
distorted Fourier variables.
\begin{corollary}
\label{coro 2.14}
For $u \in L_{+}^{2}(\mathbb R)$, the operator \(L_u\) is unitarily equivalent to multiplication by \(\lambda\) on
\(L^2(0,\infty)\). In particular,
\[
\sigma(L_u)=\sigma_{\mathrm{ac}}(L_u)=[0,\infty).
\]
\end{corollary}
\begin{proof}
By Proposition \ref{prop 2.6} and Lemma \ref{lemma 2.15}, the normalized distorted Fourier transform
\[
\mathcal U:=\frac{1}{\sqrt{2\pi}}\mathcal{F}_{d,u}
\]
is a unitary operator
\[
\mathcal U:L^2_+(\mathbb R)\longrightarrow L^2(0,\infty).
\]
Moreover, by Remark \ref{remark 2.14}, for every \(f\in \operatorname{Dom}(L_u)\),
\[
\mathcal U(L_uf)(\lambda)=\lambda\,\mathcal U f(\lambda).
\]
Equivalently,
\[
\mathcal U L_u\mathcal U^{-1}=M_\lambda,
\]
where \(M_\lambda\) denotes the operator of multiplication by \(\lambda\) on
\(L^2(0,\infty)\), with domain
\[
\operatorname{Dom}(M_\lambda)
=
\left\{
\varphi\in L^2(0,\infty):\lambda\varphi(\lambda)\in L^2(0,\infty)
\right\}.
\]
Therefore \(L_u\) is unitarily equivalent to \(M_\lambda\). The spectrum of
\(M_\lambda\) is the essential range of the function \(\lambda\mapsto\lambda\) on
\((0,\infty)\), hence
\[
\sigma(M_\lambda)=[0,\infty).
\]
Furthermore, the spectral measure of \(M_\lambda\) is
\[
E_{M_\lambda}(\Omega)\varphi
=
\mathbf 1_\Omega(\lambda)\varphi(\lambda),
\qquad
\Omega\subset[0,\infty)\ \text{Borel}.
\]
Thus all scalar spectral measures of \(M_\lambda\) are absolutely continuous with respect
to Lebesgue measure:
\[
\langle E_{M_\lambda}(\Omega)\varphi,\varphi\rangle
=
\int_{\Omega\cap(0,\infty)}|\varphi(\lambda)|^2\,d\lambda.
\]
Consequently,
\[
\sigma(M_\lambda)=\sigma_{\mathrm{ac}}(M_\lambda)=[0,\infty),
\]
and \(M_\lambda\) has no point spectrum and no singular continuous spectrum. By unitary
equivalence, the same holds for \(L_u\). Hence
\[
\sigma(L_u)=\sigma_{\mathrm{ac}}(L_u)=[0,\infty).
\]
\end{proof}

\noindent
We next show the Schwartz-space mapping property of the distorted Fourier transform in
the smooth case. This result will allow us to extend the transform to distributions and
to justify the spectral computations in the next section.
\begin{proposition}
\label{proposition 3.6}
Let \(u\in \mathcal{S}_{+}(\mathbb R):=\mathcal{S}(\mathbb R)\cap L^2_+(\mathbb R)\). Then the distorted Fourier transform
\[
\mathcal{F}_{d,u}:\mathcal{S}_{+}(\mathbb R):=\mathcal{S}(\mathbb R)\cap L^2_+(\mathbb R)\longrightarrow
\mathcal{S}_{*},
\]
where
\[
\mathcal{S}_{*}:=\{\varphi=g|_{(0,\infty)}:\ g\in \mathcal{S}(\mathbb R),\
\operatorname{supp}g\subset[0,\infty)\},
\]
is well-defined and is a continuous bijection.
\end{proposition}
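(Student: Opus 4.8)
The plan is to characterize both spaces by weighted seminorms and to show that $\mathcal F_d$ intertwines the operators generating them. Recall that $\phi\in\mathcal S_*$ if and only if $\phi\in C^\infty([0,\infty))$, $\phi$ vanishes to infinite order at $\lambda=0$, and $\sup_{\lambda>0}(1+\lambda)^N|\partial_\lambda^j\phi(\lambda)|<\infty$ for all $N,j\ge 0$; on the physical side I will use the operators $L_u$ and $X^*$. The first ingredient is that, for $u\in\mathcal S_+(\mathbb R)$, \emph{both $L_u$ and $X^*$ preserve $\mathcal S_+(\mathbb R)$}. For $D$ this is clear. For $uT_{\bar u}f=u\,\Pi(\bar u f)$: since $\bar u f\in\mathcal S(\mathbb R)$, the function $\Pi(\bar u f)$, being the inverse Fourier transform of $\mathbf 1_{\xi>0}\widehat{\bar u f}$, is smooth with all derivatives bounded, so $u\,\Pi(\bar u f)\in\mathcal S(\mathbb R)$; and it lies in $L^2_+(\mathbb R)$ as a product of an $L^2_+$ function with an $L^\infty_+$ one. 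For $X^*$: by \eqref{1.9} and \eqref{1.121} we have $X^*f=xf+\tfrac1{2i\pi}\widehat f(0_+)=xf$ for $f\in\mathcal S_+$ (since $\widehat f$ is Schwartz and supported in $[0,\infty)$, hence $\widehat f(0_+)=0$), and $xf\in\mathcal S_+$ because $\widehat{xf}=i(\widehat f)'$ is again supported in $[0,\infty)$. Consequently every noncommutative polynomial in $L_u$ and $X^*$ maps $\mathcal S_+$ into $\mathcal S_+\subset L^1_+(\mathbb R)\cap L^2_+(\mathbb R)$.

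The second, and main, ingredient is the regularity in $\lambda$ of the generalized eigenfunction $m_-(x,\lambda)=\mathrm e^{i\lambda x}a(x,\lambda)$, $a=(\operatorname{Id}-K_\lambda)^{-1}1$, from Proposition~\ref{prop:generalized_eigenfunctions}. I will show: (i) $\lambda\mapsto K_\lambda$ is norm-continuous on $[0,\infty)$, is smooth in $\lambda$, and $\sup_{\lambda\ge 0}\|\partial_\lambda^j K_\lambda\|_{\mathscr L(L^\infty)}<\infty$ for every $j$ -- indeed, each $\lambda$-derivative of the formula for $K_\lambda$ in Lemma~\ref{lemma 2.3} produces a factor $y$ or $z$ paired with $u(y)$ or $\bar u(z)$, absorbed by $yu,\,z\bar u\in\mathcal S$; (ii) $\sup_{\lambda\ge 0}\|(\operatorname{Id}-K_\lambda)^{-1}\|_{\mathscr L(L^\infty)}<\infty$, from pointwise invertibility (Fredholm together with the absence of eigenvalues of $L_u$, as in the proofs of Proposition~\ref{prop:generalized_eigenfunctions} and Lemma~\ref{lemma 2.5}), norm-continuity, and control of the limit $\lambda\to+\infty$. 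Together these give $\sup_{\lambda\ge 0,\,x\in\mathbb R}|\partial_\lambda^j a(x,\lambda)|<\infty$ for each $j$. Moreover, since $u\in\mathcal S_+$ forces $\widehat u$ to vanish to infinite order at $0$, the function $\Pi(\bar u\,\mathrm e^{i\lambda\cdot})$, whose Fourier transform is $\overline{\widehat u(\lambda-\cdot)}\,\mathbf 1_{(0,\lambda)}$, is $O(\lambda^\infty)$ uniformly as $\lambda\to0_+$; bootstrapping this through \eqref{3.2} and its $\lambda$-derivatives yields $\partial_\lambda^j\bigl(a(x,\lambda)-1\bigr)=O(\lambda^\infty)$ as $\lambda\to0_+$, uniformly in $x$. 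Finally, from $D(\mathrm e^{-i\lambda x}m_-)=-\mathrm e^{-i\lambda x}u\,\Pi(\bar u m_-)$ one also gets $\sup_{\lambda\ge 0,\,x\in\mathbb R}|\partial_x^k a(x,\lambda)|<\infty$.

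Granting this, the forward inclusion $\mathcal F_d(\mathcal S_+)\subseteq\mathcal S_*$ with continuity is routine. Differentiation of $\widetilde f(\lambda)=\int_{\mathbb R}f(x)\overline{m_-(x,\lambda)}\,dx$ under the integral sign is legitimate, since $\partial_\lambda^j m_-=\sum_{l}\binom jl(ix)^l\mathrm e^{i\lambda x}\partial_\lambda^{j-l}a$ is dominated by $|x|^j|f(x)|\cdot\sup_x|\partial_\lambda^{j-l}a|\in L^1$; so $\widetilde f\in C^\infty(0,\infty)$. For the decay: the identity $\widetilde{L_u^k f}(\lambda)=\lambda^k\widetilde f(\lambda)$, together with $L_u^k f\in\mathcal S_+\subset L^1_+$ and $\mathcal F_d\colon L^1_+\to L^\infty$ (Corollary~\ref{corollary 2.10}), gives $\|\lambda^k\widetilde f\|_{L^\infty}\lesssim\|L_u^k f\|_{L^1}$; expanding $\partial_\lambda^j(\lambda^k\widetilde f)$ by Leibniz and inducting on $j$ then yields $\sup_{\lambda>0}(1+\lambda)^k|\partial_\lambda^j\widetilde f(\lambda)|<\infty$ for all $k,j$. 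For the behavior at $0$: write $\widetilde f(\lambda)=\widehat f(\lambda)+\int_{\mathbb R}f(x)\,\mathrm e^{-i\lambda x}\bigl(\overline{a(x,\lambda)}-1\bigr)\,dx$; the first term vanishes to infinite order at $\lambda=0$ because $f\in\mathcal S_+$, and the second, together with all its $\lambda$-derivatives, does so by the $O(\lambda^\infty)$ estimate from step (ii). Hence $\widetilde f\in\mathcal S_*$, and as all the seminorm bounds are explicit in the Schwartz seminorms of $f$, the map $\mathcal F_d\colon\mathcal S_+\to\mathcal S_*$ is continuous.

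Injectivity on $\mathcal S_+$ is immediate from the isometry \eqref{3.3}. For surjectivity I run the symmetric argument for the inverse transform: by \eqref{2.4}, $\mathcal F_d^{-1}\phi(x)=\tfrac1{2\pi}\int_0^\infty\phi(\lambda)\,m_-(x,\lambda)\,d\lambda$ for $\phi\in\mathcal S_*$ (the integral converges absolutely and agrees in $L^2$ with the inverse of the surjective $L^2$-map $\mathcal F_d\colon L^2_+\to L^2(0,\infty)$). For $\phi\in\mathcal S_*$ one estimates $x^N\partial_x^M\mathcal F_d^{-1}\phi$: the derivatives $\partial_x^M$ produce powers $(i\lambda)^M$ through $\partial_x^M m_-=\sum_l\binom Ml(i\lambda)^l\mathrm e^{i\lambda x}\partial_x^{M-l}a$, absorbed by the rapid decay of $\phi$; the powers $x^N$ are converted through $x m_-=-i\partial_\lambda m_-+i\mathrm e^{i\lambda x}\partial_\lambda a$ followed by integration by parts in $\lambda$, with no boundary contribution at $\lambda=0$ (since $\phi$ and all its derivatives vanish there while $m_-(x,\cdot)$ and $\partial_\lambda^j a(x,\cdot)$ stay bounded) and none at $\lambda=+\infty$ (rapid decay). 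Thus $\mathcal F_d^{-1}\phi\in\mathcal S_+$, so $\mathcal F_d\colon\mathcal S_+\to\mathcal S_*$ is a continuous bijection, and continuity of its inverse follows from the open mapping theorem for Fréchet spaces. I expect the main obstacle to be step (ii): the uniform resolvent bound $\sup_{\lambda\ge 0}\|(\operatorname{Id}-K_\lambda)^{-1}\|_{\mathscr L(L^\infty)}<\infty$ over \emph{all} of $[0,\infty)$, together with the $O(\lambda^\infty)$ control near $\lambda=0$ -- pointwise invertibility and norm-continuity cover compact $\lambda$-ranges, but the regime $\lambda\to+\infty$ requires a non-stationary-phase analysis exploiting the cancellation of the two exponentials $\mathrm e^{\mp i\lambda\cdot}$ inside $K_\lambda$, while the infinite-order vanishing of $\widehat u$ at $0$ must be threaded through the resolvent and all its $\lambda$-derivatives. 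Everything else is bookkeeping with Schwartz seminorms.
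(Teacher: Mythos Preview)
Your proposal is correct in outline and takes a genuinely different route from the paper. The paper establishes that $a(x,\lambda)$ is a classical symbol of order $0$, namely $\|\partial_x^n\partial_\lambda^m a\|_{L^\infty_x}\lesssim(1+\lambda)^{-m}$, by repeatedly differentiating the identity $a=(\operatorname{Id}-K_\lambda)^{-1}1$ and exploiting the rapid decay of $\widetilde u$ (the key formula being $\partial_\lambda a=\tfrac1{2i\pi}(\operatorname{Id}-K_\lambda)^{-1}\bigl(\bar{\widetilde u}(\lambda)\int_{-\infty}^x u\,e^{-i\lambda y}\,dy\bigr)$); once $a$ is a symbol, the mapping $\mathcal S_+\to\mathcal S_*$ is delegated to standard pseudodifferential theory, and surjectivity is deduced in one line from unitarity. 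You instead prove only uniform \emph{boundedness} of $\partial_\lambda^j a$ and recover the rapid decay of $\widetilde f$ through the intertwining $\widetilde{L_u^k f}=\lambda^k\widetilde f$ together with $L_u^k f\in\mathcal S_+\subset L^1_+$; you handle the infinite-order vanishing at $\lambda=0$ by an explicit bootstrap on $a-1$ (a point the paper's appeal to PDO theory does not visibly address); and you argue surjectivity by direct seminorm estimates on the inverse transform rather than by the unitarity shortcut. Your approach buys self-containment and transparency, especially near $\lambda=0$; the paper's sharper symbol bounds (genuine decay of $\partial_\lambda^m a$ in $\lambda$, not just boundedness) buy access to the PDO black box. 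Both routes ultimately rest on the uniform bound $\sup_{\lambda\ge0}\|(\operatorname{Id}-K_\lambda)^{-1}\|_{\mathscr L(L^\infty)}<\infty$: you flag this honestly as the crux, while the paper invokes it implicitly (its estimate $\|\partial_\lambda a\|_{L^\infty_x}\le C(1+\lambda)^{-1}$ already presupposes it). One small point: your surjectivity step, when iterated, will require the \emph{mixed} bounds $\sup_{x,\lambda}|\partial_x^p\partial_\lambda^q a|<\infty$, not just (a) and (c) separately; this follows by the same bootstrap but should be stated.
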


\begin{proof}
Write
\[
m_-(x,\lambda)=e^{i\lambda x}a(x,\lambda),
\qquad \lambda>0,
\]
where \(a_\lambda=a(\cdot,\lambda)\) solves
\[
(I-K_\lambda)a_\lambda=1
\]
with
\[
K_\lambda b(x)
:=
-i\int_{-\infty}^x
e^{-i\lambda y}u(y)
\Pi\bigl(\overline u\,e^{i\lambda\cdot}b\bigr)(y)\,dy .
\]
By Remark \ref{remark 2.7}, the same equation is meaningful at \(\lambda=0\), and
\[
a(x,0)=1.
\]
Moreover, by Proposition \ref{prop:uniform_m},
\begin{equation}
\label{3.6.1}
\sup_{\lambda\ge0}
\|(I-K_\lambda)^{-1}\|_{\mathcal L(L^\infty,L^\infty)}<\infty .
\end{equation}

We first record the regularity properties of the amplitude \(a\). Since
\(u\in\mathcal S_+(\mathbb R)\), for every \(j\ge0\) the map
\[
\lambda\mapsto K_\lambda
\]
is \(C^\infty\) from \([0,\infty)\) to
\(\mathcal L(L^\infty,L^\infty)\). Indeed, after differentiating \(K_\lambda\) in
\(\lambda\), one only produces factors \(y^r u(y)\) outside the projector and factors
\(y^s\overline u(y)\) inside the projector. Hence, by the Cauchy--Schwarz inequality and
the \(L^2\)-boundedness of \(\Pi\),
\begin{equation}
\label{3.6.2}
\|\partial_\lambda^jK_\lambda b\|_{L^\infty}
\le C_j\|b\|_{L^\infty},
\qquad \lambda\ge0 .
\end{equation}
Differentiating
\[
(I-K_\lambda)a_\lambda=1
\]
\(k\) times in \(\lambda\), and using \eqref{3.6.1} and \eqref{3.6.2}, gives
\begin{equation}
\label{3.6.3}
\sup_{\lambda\ge0}
\|\partial_\lambda^k a(\cdot,\lambda)\|_{L^\infty}<\infty,
\qquad k\ge0 .
\end{equation}
We shall also use the following mixed \(x\)- and \(\lambda\)-bounds. For every
\(n,k\ge0\), there exists \(C_{n,k}>0\) such that
\begin{equation}
\label{3.6.4}
\sup_{\lambda\ge0}\sup_{x\in\mathbb R}
|\partial_x^n\partial_\lambda^k a(x,\lambda)|
\le C_{n,k}.
\end{equation}
We include the proof. For \(\lambda\ge0\), define
\[
T_\lambda q
:=
e^{-i\lambda x}\Pi(e^{i\lambda\cdot}q)(x).
\]
Then, in Fourier variables,
\[
\widehat{T_\lambda q}(\xi)
=
\mathbf 1_{\xi\ge-\lambda}\widehat q(\xi).
\]
Therefore, for every integer \(m\ge0\),
\[
\|T_\lambda q\|_{H^m}
\le
\|q\|_{H^m},
\qquad \lambda\ge0.
\]
Set
\[
P_\lambda
:=
\Pi\bigl(\overline u\,e^{i\lambda\cdot}a(\cdot,\lambda)\bigr).
\]
Then
\begin{equation}
\label{3.6.5}
\partial_x a(x,\lambda)
=
-i e^{-i\lambda x}u(x)P_\lambda(x).
\end{equation}
After differentiating \eqref{3.6.5} in \(\lambda\), one obtains, for every \(k\ge0\),
\begin{equation}
\label{3.6.6}
\partial_x\partial_\lambda^k a(x,\lambda)
=
\sum_{r+s+j\le k}
c_{r,s,j,k}\,
x^r u(x)\,
T_\lambda\bigl(x^s\overline u\,\partial_\lambda^j a(\cdot,\lambda)\bigr)(x).
\end{equation}
Here the factors \(x^r\) and \(x^s\) come from differentiating
\(e^{-i\lambda x}\) and \(e^{i\lambda y}\) with respect to \(\lambda\), but no positive
power of \(\lambda\) is produced.

We prove by induction on \(m\ge0\) that, for every \(k\ge0\),
\[
\sup_{\lambda\ge0}
\|\partial_x\partial_\lambda^k a(\cdot,\lambda)\|_{H^m}
<\infty .
\]
For \(m=0\), by \eqref{3.6.3},
\[
\sup_{\lambda\ge0}
\|\partial_\lambda^j a(\cdot,\lambda)\|_{L^\infty}<\infty
\]
for every \(j\ge0\). Since \(u\in\mathcal S_+(\mathbb R)\), we have
\[
\sup_{\lambda\ge0}
\|x^s\overline u\,\partial_\lambda^j a(\cdot,\lambda)\|_{L^2}
<\infty .
\]
Using the uniform \(L^2\)-boundedness of \(T_\lambda\), and then \eqref{3.6.6}, gives
\[
\sup_{\lambda\ge0}
\|\partial_x\partial_\lambda^k a(\cdot,\lambda)\|_{L^2}
<\infty .
\]
Thus the claim holds for \(m=0\).

Assume now that the claim holds for some \(m\ge0\), for every \(k\ge0\). Then, for every
\(s,j\),
\[
x^s\overline u\,\partial_\lambda^j a(\cdot,\lambda)
\]
is bounded in \(H^{m+1}(\mathbb R)\), uniformly in \(\lambda\ge0\). Indeed, after applying
at most \(m+1\) derivatives in \(x\), each term is a product of a Schwartz function and
some \(\partial_x^\ell\partial_\lambda^j a\), with \(0\le \ell\le m+1\). The case
\(\ell=0\) is controlled by \eqref{3.6.3}, while the cases \(\ell\ge1\) are controlled by
the induction hypothesis.

Using the uniform \(H^{m+1}\)-boundedness of \(T_\lambda\), we obtain
\[
\sup_{\lambda\ge0}
\left\|
T_\lambda\bigl(x^s\overline u\,\partial_\lambda^j a(\cdot,\lambda)\bigr)
\right\|_{H^{m+1}}
<\infty .
\]
Multiplication by the fixed Schwartz function \(x^r u(x)\) is bounded on
\(H^{m+1}(\mathbb R)\). Therefore \eqref{3.6.6} gives
\[
\sup_{\lambda\ge0}
\|\partial_x\partial_\lambda^k a(\cdot,\lambda)\|_{H^{m+1}}
<\infty .
\]
This closes the induction.

Finally, for \(n=0\), \eqref{3.6.4} is exactly \eqref{3.6.3}. For \(n\ge1\), choose
\(m=n\) in the estimate just proved. Then
\[
\partial_x^n\partial_\lambda^k a
=
\partial_x^{n-1}\bigl(\partial_x\partial_\lambda^k a\bigr)
\]
belongs to \(H^1(\mathbb R)\), uniformly in \(\lambda\ge0\). Since
\(H^1(\mathbb R)\hookrightarrow L^\infty(\mathbb R)\), we obtain \eqref{3.6.4}.

We next prove the behavior at \(\lambda=0\). Since \(u\in\mathcal S_+(\mathbb R)\), its
ordinary Fourier transform belongs to \(\mathcal S_*\). Therefore
\begin{equation}
\label{3.6.7}
\partial_\xi^j\widehat u(0)=0,
\qquad j\ge0 .
\end{equation}
For \(\lambda\ge0\),
\begin{equation}
\label{3.6.8}
\widehat{\Pi(\overline u\,e^{i\lambda\cdot})}(\xi)
=
\mathbf 1_{\xi\ge0}\widehat{\overline u}(\xi-\lambda)
=
\mathbf 1_{0\le\xi\le\lambda}\,
\overline{\widehat u(\lambda-\xi)} .
\end{equation}
Because all derivatives of \(\widehat u\) vanish at \(0\), differentiating
\eqref{3.6.8} in \(\lambda\) produces no boundary contributions. Hence, for every
\(j,N\ge0\),
\begin{equation}
\label{3.6.9}
\left\|
\partial_\lambda^j
\Pi(\overline u\,e^{i\lambda\cdot})
\right\|_{L^2}
\le C_{j,N}\lambda^N
\qquad\text{as }\lambda\to0_{+}.
\end{equation}
Indeed, after differentiating \(j\) times in \(\lambda\), the Fourier transform is
supported in \(0\le\xi\le\lambda\) and is bounded by
\(C_N(\lambda-\xi)^N\) for every \(N\).

Differentiating \(K_\lambda1\) in \(\lambda\), and using the Cauchy--Schwarz inequality,
we get
\begin{equation}
\label{3.6.10}
\|\partial_\lambda^jK_\lambda1\|_{L^\infty}
\le C_{j,N}\lambda^N
\qquad\text{as }\lambda\to0_{+}
\end{equation}
for every \(j,N\ge0\). Since
\[
a_\lambda-1=(I-K_\lambda)^{-1}K_\lambda1
\]
and \(\lambda\mapsto (I-K_\lambda)^{-1}\) is \(C^\infty\) near \(\lambda=0\), it follows
that
\begin{equation}
\label{3.6.11}
\partial_\lambda^k(a(\cdot,\lambda)-1)\big|_{\lambda=0}=0
\quad\text{in }L^\infty(\mathbb R),
\qquad k\ge0.
\end{equation}
We now prove that \(\mathcal F_{d,u}\) maps \(\mathcal S_+(\mathbb R)\) continuously into
\(\mathcal S_*\). Let \(f\in\mathcal S_+(\mathbb R)\). Then
\begin{equation}
\label{3.6.12}
(\mathcal F_{d,u}f)(\lambda)
=
\widetilde f(\lambda)
=
\int_{\mathbb R}
e^{-i\lambda x}\overline{a(x,\lambda)}f(x)\,dx,
\qquad \lambda>0.
\end{equation}
By \eqref{3.6.3}, differentiation under the integral sign is justified. For \(k\ge0\),
\begin{equation}
\label{3.6.13}
\partial_\lambda^k\widetilde f(\lambda)
=
\sum_{\ell=0}^k
c_{k,\ell}
\int_{\mathbb R}
e^{-i\lambda x}
(-ix)^{k-\ell}
\partial_\lambda^\ell\overline{a(x,\lambda)}
f(x)\,dx .
\end{equation}
Consequently, for every \(g\in\mathcal S_+(\mathbb R)\),
\begin{equation}
\label{3.6.14}
\sup_{\lambda\ge0}
|\partial_\lambda^k\mathcal F_{d,u}g(\lambda)|
\le
C_k
\sum_{j+\ell\le k}
\|x^jg\|_{L^1}
\sup_{\lambda\ge0}
\|\partial_\lambda^\ell a(\cdot,\lambda)\|_{L^\infty}.
\end{equation}
Thus all \(\lambda\)-derivatives of \(\mathcal F_{d,u}g\) are bounded.

We next prove rapid decay as \(\lambda\to+\infty\). We first note that
\[
L_u=D+uT_{\overline u}
\]
maps \(\mathcal S_+(\mathbb R)\) continuously into itself. The derivative part is clear.
For the perturbation, if \(h\in\mathcal S_+(\mathbb R)\), then
\(\overline u\,h\in\mathcal S(\mathbb R)\). Although the Riesz projection \(\Pi\) does
not in general map \(\mathcal S(\mathbb R)\) into itself, it maps
\(\mathcal S(\mathbb R)\) continuously into \(C^\infty\) functions whose derivatives have
at most polynomial growth. Multiplication by the Schwartz function \(u\) therefore gives
\[
u\,T_{\overline u}h
=
u\,\Pi(\overline u\,h)
\in\mathcal S(\mathbb R).
\]
Moreover \(T_{\overline u}h\in L^2_+(\mathbb R)\), and the product of two positive-frequency
functions has Fourier support in \([0,\infty)\). Hence
\[
uT_{\overline u}h\in\mathcal S_+(\mathbb R).
\]
Thus
\begin{equation}
\label{3.6.15}
L_u^Nf\in\mathcal S_+(\mathbb R),
\qquad N\ge0,
\end{equation}
and the map \(f\mapsto L_u^Nf\) is continuous on \(\mathcal S_+(\mathbb R)\).

By Remark \ref{remark 2.14}, we can infer
\[
\mathcal F_{d,u}(L_ug)(\lambda)=\lambda\,\mathcal F_{d,u}g(\lambda),
\qquad g\in\mathcal S_+(\mathbb R).
\]
Iterating gives
\begin{equation}
\label{3.6.16}
\mathcal F_{d,u}(L_u^Ng)(\lambda)=\lambda^N\mathcal F_{d,u}g(\lambda),
\qquad N\ge0.
\end{equation}
Taking \(g=f\), using \eqref{3.6.14} with \(g=L_u^Nf\), and differentiating
\eqref{3.6.16} in \(\lambda\), we obtain rapid decay of every derivative of
\(\widetilde f\). More explicitly, for \(k=0\),
\[
\sup_{\lambda\ge0}
\lambda^N|\widetilde f(\lambda)|
=
\sup_{\lambda\ge0}
|\mathcal F_{d,u}(L_u^Nf)(\lambda)|
<\infty .
\]
For \(k\ge1\), assume by induction that all derivatives of order \(<k\) are rapidly
decreasing. Differentiating
\[
\lambda^N\widetilde f(\lambda)=\mathcal F_{d,u}(L_u^Nf)(\lambda)
\]
\(k\) times gives
\[
\lambda^N\partial_\lambda^k\widetilde f(\lambda)
=
\partial_\lambda^k\mathcal F_{d,u}(L_u^Nf)(\lambda)
-
\sum_{r=0}^{k-1}
c_{N,k,r}\lambda^{N-k+r}\partial_\lambda^r\widetilde f(\lambda).
\]
The first term is bounded by \eqref{3.6.14}. Taking \(N\ge k\), the remaining powers of
\(\lambda\) are nonnegative, and the remaining terms are bounded by the induction
hypothesis. Since \(N\) can be chosen arbitrarily large, we obtain
\begin{equation}
\label{3.6.17}
\sup_{\lambda>0}
\langle\lambda\rangle^M
|\partial_\lambda^k\widetilde f(\lambda)|
<\infty
\qquad\text{for all }M,k\ge0.
\end{equation}

It remains to check the behavior at \(\lambda=0\). Since \(f\in\mathcal S_+(\mathbb R)\),
its ordinary Fourier transform belongs to \(\mathcal S_*\), and hence
\begin{equation}
\label{3.6.18}
\partial_\lambda^k\widehat f(0)=0,
\qquad k\ge0.
\end{equation}
Letting \(\lambda\to0_{+}\) in \eqref{3.6.13}, and using \eqref{3.6.11}, gives
\begin{equation}
\label{3.6.19}
\partial_\lambda^k\widetilde f(0_{+})
=
\int_{\mathbb R}(-ix)^kf(x)\,dx
=
\partial_\lambda^k\widehat f(0)
=
0 .
\end{equation}
Therefore the extension of \(\widetilde f\) by zero to \((-\infty,0]\) is a Schwartz
function supported in \([0,\infty)\). Hence
\[
\mathcal F_{d,u}f\in\mathcal S_*.
\]
The estimates above also show that
\[
\mathcal F_{d,u}:\mathcal S_+(\mathbb R)\to\mathcal S_*
\]
is continuous.

We now prove surjectivity on \(\mathcal S_*\) and continuity of the inverse. Let
\(\varphi\in\mathcal S_*\), and define
\begin{equation}
\label{3.6.20}
G\varphi(x)
:=
\frac1{2\pi}
\int_0^\infty e^{i\lambda x}a(x,\lambda)\varphi(\lambda)\,d\lambda
=
\frac1{2\pi}
\int_0^\infty m_-(x,\lambda)\varphi(\lambda)\,d\lambda .
\end{equation}
We first show that \(G\varphi\in\mathcal S(\mathbb R)\). Differentiating in \(x\), we get
\begin{equation}
\label{3.6.21}
\partial_x^nG\varphi(x)
=
\frac1{2\pi}
\sum_{j=0}^n
\binom nj
\int_0^\infty
(i\lambda)^j e^{i\lambda x}
\partial_x^{n-j}a(x,\lambda)\varphi(\lambda)\,d\lambda .
\end{equation}
The integral is absolutely convergent because \(\varphi\) is rapidly decreasing and
\eqref{3.6.4} gives uniform-in-\(\lambda\) bounds for the \(x\)-derivatives of \(a\).

To gain powers of \(x\), use
\[
x^M e^{i\lambda x}=i^{-M}\partial_\lambda^M(e^{i\lambda x})
\]
and integrate by parts \(M\) times in \(\lambda\). There are no boundary terms: at
\(\lambda=+\infty\) this follows from the rapid decay of \(\varphi\), while at
\(\lambda=0\) it follows from the fact that \(\varphi\in\mathcal S_*\), so every
derivative of \(\varphi\) vanishes at \(0\). Thus
\[
\begin{aligned}
|x|^M|\partial_x^nG\varphi(x)|
&\le
C_{M,n}
\sum_{j=0}^n
\int_0^\infty
\left|
\partial_\lambda^M
\left[
\lambda^j\partial_x^{n-j}a(x,\lambda)\varphi(\lambda)
\right]
\right|\,d\lambda .
\end{aligned}
\]
Using \eqref{3.6.4} and the rapid decay of \(\varphi\), we obtain
\begin{equation}
\label{3.6.22}
\sup_{x\in\mathbb R}
|x|^M|\partial_x^nG\varphi(x)|
\le
C_{M,n}
\sum_{r+s\le N_{M,n}}
\sup_{\lambda>0}
\langle\lambda\rangle^r
|\partial_\lambda^s\varphi(\lambda)|.
\end{equation}
Therefore \(G\varphi\in\mathcal S(\mathbb R)\), and
\[
G:\mathcal S_*\to\mathcal S(\mathbb R)
\]
is continuous.

We next show that \(G\varphi\in L^2_+(\mathbb R)\). By Proposition
\ref{prop:generalized_eigenfunctions}, for every \(\lambda>0\),
\[
m_-(\cdot,\lambda)\in L^\infty_+(\mathbb R).
\]
Hence, if \(q\in\mathcal S(\mathbb R)\) has Fourier support contained in
\((-\infty,0)\), then
\[
\int_{\mathbb R}m_-(x,\lambda)\overline{q(x)}\,dx=0
\qquad\text{for every }\lambda>0.
\]
Using the absolute convergence of the integral defining \(G\varphi\), we obtain
\[
\int_{\mathbb R}G\varphi(x)\overline{q(x)}\,dx=0.
\]
Thus the Fourier transform of \(G\varphi\) is supported in \([0,\infty)\). Since
\(G\varphi\in\mathcal S(\mathbb R)\), we have
\begin{equation}
\label{3.6.23}
G\varphi\in\mathcal S_+(\mathbb R).
\end{equation}

It remains to identify \(G\) with the inverse distorted Fourier transform. By Proposition
\ref{prop 2.6} and Lemma \ref{lemma 2.15}, the normalized transform
\[
\mathcal U:=\frac1{\sqrt{2\pi}}\mathcal F_{d,u}
\]
is unitary from \(L^2_+(\mathbb R)\) onto \(L^2(0,\infty)\). Therefore
\(\mathcal F_{d,u}^{-1}=(2\pi)^{-1}\mathcal F_{d,u}^*\) in the \(L^2\)-sense.

For \(h\in\mathcal S_+(\mathbb R)\), Fubini's theorem gives
\begin{equation}
\label{3.6.24}
\begin{aligned}
\langle h,G\varphi\rangle_{L^2(\mathbb R)}
&=
\frac1{2\pi}
\int_0^\infty
\left(
\int_{\mathbb R}h(x)\overline{m_-(x,\lambda)}\,dx
\right)
\overline{\varphi(\lambda)}\,d\lambda \\
&=
\frac1{2\pi}
\int_0^\infty
(\mathcal F_{d,u}h)(\lambda)\overline{\varphi(\lambda)}\,d\lambda .
\end{aligned}
\end{equation}
The right-hand side is precisely
\[
\langle h,\mathcal F_{d,u}^{-1}\varphi\rangle_{L^2(\mathbb R)}.
\]
Since \(\mathcal S_+(\mathbb R)\) is dense in \(L^2_+(\mathbb R)\), and since both
\(G\varphi\) and \(\mathcal F_{d,u}^{-1}\varphi\) belong to \(L^2_+(\mathbb R)\), we conclude
that
\begin{equation}
\label{3.6.25}
G\varphi=\mathcal F_{d,u}^{-1}\varphi.
\end{equation}
In particular,
\[
\mathcal F_{d,u}(G\varphi)=\varphi
\quad\text{in }L^2(0,\infty).
\]
Since both sides are smooth functions on \((0,\infty)\), the equality holds pointwise.

Finally, for \(f\in\mathcal S_+(\mathbb R)\), the \(L^2\)-unitarity gives
\[
G(\mathcal F_{d,u}f)=f.
\]
Therefore
\[
\mathcal F_{d,u}:\mathcal S_+(\mathbb R)\to\mathcal S_*
\]
is a bijection. The continuity of \(\mathcal F_{d,u}\) was proved above, and the continuity of
\(G:\mathcal S_*\to\mathcal S_+(\mathbb R)\) follows from \eqref{3.6.22}. This completes
the proof.
\end{proof}
Since \(\mathcal F_{d,u}\) is an isomorphism between the test-function spaces
\(\mathcal S_+\) and \(\mathcal S_*\), it has a canonical transpose acting on the
corresponding distribution spaces. This distributional extension will be used when the
objects under consideration are not in \(L^2\).
\begin{corollary}
\label{corollary 3.7}
Let \(u\in\mathcal S_+(\mathbb R)\). The distorted Fourier transform
\[
\mathcal F_{d,u}:\mathcal S_+(\mathbb R)\to\mathcal S_*
\]
extends canonically to a continuous linear isomorphism
\[
\mathcal F_{d,u}:\mathcal S_+'(\mathbb R)\to\mathcal S_*'
\]
by
\[
\langle \mathcal F_{d,u} F,\phi\rangle_{\mathcal S_*',\mathcal S_*}
:=
2\pi\,
\langle F,\mathcal F_{d,u}^{-1}\phi\rangle_{\mathcal S_+',\mathcal S_+},
\qquad
F\in\mathcal S_+'(\mathbb R),\quad \phi\in\mathcal S_*.
\tag{3.7.1}
\]
Here the duality pairings are taken with the same sesquilinear convention as the
\(L^2\)-inner product.
\end{corollary}

\begin{proof}
By Proposition \ref{proposition 3.6},
\[
\mathcal F_{d,u}:\mathcal S_+(\mathbb R)\to\mathcal S_*
\]
is a continuous bijection with continuous inverse
\[
\mathcal F_{d,u}^{-1}:\mathcal S_*\to\mathcal S_+(\mathbb R).
\]
Thus, for every \(F\in\mathcal S_+'(\mathbb R)\), the formula
\[
\phi\mapsto
2\pi\,
\langle F,\mathcal F_{d,u}^{-1}\phi\rangle_{\mathcal S_+',\mathcal S_+}
\]
defines a continuous linear functional on \(\mathcal S_*\). Hence
\[
\mathcal F_{d,u}F\in\mathcal S_*',
\]
and \((3.7.1)\) defines a continuous linear map
\[
\mathcal F_{d,u}:\mathcal S_+'(\mathbb R)\to\mathcal S_*'.
\]
This map is an isomorphism. Indeed, its inverse is given by
\[
\langle \mathcal F_{d,u}^{-1}G,\psi\rangle_{\mathcal S_+',\mathcal S_+}
:=
\frac1{2\pi}
\langle G,\mathcal F_{d,u}\psi\rangle_{\mathcal S_*',\mathcal S_*},
\qquad
G\in\mathcal S_*',\quad \psi\in\mathcal S_+(\mathbb R).
\tag{3.7.2}
\]
The continuity of \((3.7.2)\) follows from the continuity of
\[
\mathcal F_{d,u}:\mathcal S_+(\mathbb R)\to\mathcal S_*.
\]
A direct computation shows that the maps defined by \((3.7.1)\) and \((3.7.2)\) are
inverse to each other.

Finally, this extension agrees with the original distorted Fourier transform on regular
distributions. Indeed, if \(f\in\mathcal S_+(\mathbb R)\) and \(\phi\in\mathcal S_*\),
then, using the inverse formula from Proposition \ref{proposition 3.6},
\[
\mathcal F_{d,u}^{-1}\phi(x)
=
\frac1{2\pi}
\int_0^\infty m_-(x,\lambda)\phi(\lambda)\,d\lambda,
\]
we obtain
\[
\begin{aligned}
2\pi\,
\langle f,\mathcal F_{d,u}^{-1}\phi\rangle_{L^2(\mathbb R)}
&=
2\pi
\int_{\mathbb R}
f(x)\overline{\mathcal F_{d,u}^{-1}\phi(x)}\,dx  \\
&=
\int_0^\infty
\left(
\int_{\mathbb R}f(x)\overline{m_-(x,\lambda)}\,dx
\right)
\overline{\phi(\lambda)}\,d\lambda  \\
&=
\int_0^\infty
(\mathcal F_{d,u}f)(\lambda)\overline{\phi(\lambda)}\,d\lambda  \\
&=
\langle \mathcal F_{d,u}f,\phi\rangle_{L^2(0,\infty)}.
\end{aligned}
\]
Thus the distributional definition extends the classical one. This proves the corollary.
\end{proof}
\noindent
Finally, we need a stability result with respect to the potential. This will allow us to
derive identities first for smooth data and then pass to rough initial data by
approximation.
\begin{proposition}[Continuity of the distorted Fourier transform with respect to the potential]
\label{prop:continuity_Fd}
Let
\[
u_0\in L^2_+(\mathbb R),
\]
and let \((u_n)_{n\ge1}\subset \mathcal S_+(\mathbb R)\) satisfy
\[
u_n\to u_0
\qquad\text{in }L^2_+(\mathbb R).
\]
For \(n\ge0\), with the convention that \(u_0\) is the limiting potential, let
\[
m_n(x,\lambda):=m_-(x,\lambda;u_n),
\qquad
\mathcal F_n:=\mathcal F_{d,u_n}.
\]
Then the following statements hold.

\begin{enumerate}
\item We have
\[
\sup_{\lambda>0}
\|m_n(\cdot,\lambda)-m_0(\cdot,\lambda)\|_{L^\infty(\mathbb R)}
\longrightarrow0.
\]

\item For every \(f\in L^1(\mathbb R)\cap L^2_+(\mathbb R)\),
\[
\sup_{\lambda>0}
|(\mathcal F_n f)(\lambda)-(\mathcal F_0 f)(\lambda)|
\longrightarrow0.
\]

\item For every \(f\in L^2_+(\mathbb R)\),
\[
\mathcal F_n f\to \mathcal F_0f
\qquad\text{in }L^2(0,\infty).
\]

\item If
\[
\widetilde u_n:=\mathcal F_nu_n,
\qquad
\widetilde u_0:=\mathcal F_0u_0,
\]
then
\[
\widetilde u_n\to \widetilde u_0
\qquad\text{in }L^2(0,\infty).
\]

\end{enumerate}
\end{proposition}

\begin{proof}
We divide the proof into several steps.

\medskip
\noindent\textbf{Step 1: continuity of the Fredholm operators and uniform invertibility.}
For \(u\in L^2_+(\mathbb R)\) and \(\lambda>0\), define
\[
K_{\lambda,u}:L^\infty(\mathbb R)\to L^\infty(\mathbb R)
\]
by
\[
K_{\lambda,u}a(x)
:=
-i\int_{-\infty}^x
e^{-i\lambda y}u(y)
\Pi\bigl(\overline u\,e^{i\lambda\cdot}a\bigr)(y)\,dy .
\]
For the potential \(u_n\), write
\[
K_{\lambda,n}:=K_{\lambda,u_n}.
\]
The generalized eigenfunction has the form
\[
m_n(x,\lambda)=e^{i\lambda x}a_n(x,\lambda),
\]
where
\[
(I-K_{\lambda,n})a_n(\cdot,\lambda)=1.
\]
We first prove that
\begin{equation}
\label{2.9.1}
\delta_n:=
\sup_{\lambda>0}
\|K_{\lambda,n}-K_{\lambda,0}\|_{\mathcal L(L^\infty,L^\infty)}
\longrightarrow0 .
\end{equation}
Indeed, let \(\|a\|_{L^\infty}\le1\). Then
\[
\begin{aligned}
(K_{\lambda,n}-K_{\lambda,0})a(x)
&=
-i\int_{-\infty}^x
e^{-i\lambda y}(u_n-u_0)(y)
\Pi\bigl(\overline{u_n}\,e^{i\lambda\cdot}a\bigr)(y)\,dy  \\
&\quad
-i\int_{-\infty}^x
e^{-i\lambda y}u_0(y)
\Pi\bigl((\overline{u_n}-\overline{u_0})e^{i\lambda\cdot}a\bigr)(y)\,dy .
\end{aligned}
\]
By the Cauchy--Schwarz inequality and the \(L^2\)-boundedness of \(\Pi\),
\[
\begin{aligned}
\|(K_{\lambda,n}-K_{\lambda,0})a\|_{L^\infty}
&\le
\|u_n-u_0\|_{L^2}
\|\Pi(\overline{u_n}e^{i\lambda\cdot}a)\|_{L^2}  \\
&\quad
+\|u_0\|_{L^2}
\|\Pi((\overline{u_n}-\overline{u_0})e^{i\lambda\cdot}a)\|_{L^2}  \\
&\le
\|u_n-u_0\|_{L^2}
\bigl(\|u_n\|_{L^2}+\|u_0\|_{L^2}\bigr).
\end{aligned}
\]
The right-hand side is independent of \(\lambda\) and tends to \(0\). This proves
\eqref{2.9.1}.

By Proposition \ref{prop:uniform_m}, more precisely by \eqref{2.3}, we have
\begin{equation}
\label{2.9.2}
M_0:=
\sup_{\lambda>0}
\|(I-K_{\lambda,0})^{-1}\|_{\mathcal L(L^\infty,L^\infty)}
<\infty .
\end{equation}
Choose \(N\) so large that
\begin{equation}
\label{2.9.3}
\delta_nM_0\le \frac12
\qquad\text{for all }n\ge N.
\end{equation}
For \(n\ge N\), we factor
\[
I-K_{\lambda,n}
=
\Bigl[
I-(K_{\lambda,n}-K_{\lambda,0})(I-K_{\lambda,0})^{-1}
\Bigr](I-K_{\lambda,0}).
\]
By \eqref{2.9.2} and \eqref{2.9.3}, the first factor is invertible by the Neumann
series, uniformly in \(\lambda>0\). Hence
\begin{equation}
\label{2.9.4}
\sup_{n\ge N}\sup_{\lambda>0}
\|(I-K_{\lambda,n})^{-1}\|_{\mathcal L(L^\infty,L^\infty)}
\le 2M_0 .
\end{equation}
Now the second resolvent identity gives
\[
(I-K_{\lambda,n})^{-1}-(I-K_{\lambda,0})^{-1}
=
(I-K_{\lambda,n})^{-1}
(K_{\lambda,n}-K_{\lambda,0})
(I-K_{\lambda,0})^{-1}.
\]
Applying this identity to \(1\), we get
\[
a_n(\cdot,\lambda)-a_0(\cdot,\lambda)
=
(I-K_{\lambda,n})^{-1}
(K_{\lambda,n}-K_{\lambda,0})
a_0(\cdot,\lambda).
\]
Therefore, by \eqref{2.9.1}, \eqref{2.9.2}, and \eqref{2.9.4},
\begin{equation}
\label{2.9.5}
\begin{aligned}
\sup_{\lambda>0}
\|a_n(\cdot,\lambda)-a_0(\cdot,\lambda)\|_{L^\infty}
&\le
2M_0\,\delta_n\,
\sup_{\lambda>0}\|a_0(\cdot,\lambda)\|_{L^\infty}
\longrightarrow0 .
\end{aligned}
\end{equation}
Since
\[
m_n(x,\lambda)-m_0(x,\lambda)
=
e^{i\lambda x}
\bigl(a_n(x,\lambda)-a_0(x,\lambda)\bigr),
\]
we obtain
\[
\sup_{\lambda>0}
\|m_n(\cdot,\lambda)-m_0(\cdot,\lambda)\|_{L^\infty}
\to0.
\]
This proves part \((1)\).

\medskip
\noindent\textbf{Step 2: convergence for \(L^1\cap L^2_+\) data.}
Let \(f\in L^1(\mathbb R)\cap L^2_+(\mathbb R)\). Then
\[
\begin{aligned}
|(\mathcal F_nf)(\lambda)-(\mathcal F_0f)(\lambda)|
&=
\left|
\int_{\mathbb R}
f(x)\bigl(\overline{m_n(x,\lambda)}-\overline{m_0(x,\lambda)}\bigr)\,dx
\right|  \\
&\le
\|f\|_{L^1}
\|m_n(\cdot,\lambda)-m_0(\cdot,\lambda)\|_{L^\infty}.
\end{aligned}
\]
Taking the supremum over \(\lambda>0\) and using part \((1)\), we obtain
\[
\sup_{\lambda>0}
|(\mathcal F_nf)(\lambda)-(\mathcal F_0f)(\lambda)|
\to0.
\]
This proves part \((2)\).

\medskip
\noindent\textbf{Step 3: strong convergence in \(L^2(0,\infty)\).}
Fix \(f\in L^2_+(\mathbb R)\). We first prove local \(L^2\)-convergence. Let
\(0<\delta<R<\infty\), and let \(\varepsilon>0\). Since
\(L^1(\mathbb R)\cap L^2_+(\mathbb R)\) is dense in \(L^2_+(\mathbb R)\), choose
\(g\in L^1(\mathbb R)\cap L^2_+(\mathbb R)\) such that
\[
\|f-g\|_{L^2}<\varepsilon.
\]
By the \(L^2\)-isometry of the distorted Fourier transform,
\[
\|\mathcal F_n(f-g)\|_{L^2(0,\infty)}
=
\sqrt{2\pi}\|f-g\|_{L^2}
<\sqrt{2\pi}\varepsilon,
\]
and the same estimate holds for \(\mathcal F_0(f-g)\). Hence
\[
\begin{aligned}
\|\mathcal F_nf-\mathcal F_0f\|_{L^2([\delta,R])}
&\le
\|\mathcal F_n(f-g)\|_{L^2([\delta,R])}
+
\|\mathcal F_ng-\mathcal F_0g\|_{L^2([\delta,R])}  \\
&\quad
+\|\mathcal F_0(g-f)\|_{L^2([\delta,R])}  \\
&\le
2\sqrt{2\pi}\varepsilon
+
|R-\delta|^{1/2}
\sup_{\lambda\in[\delta,R]}
|(\mathcal F_ng)(\lambda)-(\mathcal F_0g)(\lambda)|.
\end{aligned}
\]
By part \((2)\), the last term tends to \(0\) as \(n\to\infty\). Therefore
\begin{equation}
\label{2.9.6}
\mathcal F_nf\to\mathcal F_0f
\qquad\text{in }L^2([\delta,R])
\end{equation}
for every compact interval \([\delta,R]\subset(0,\infty)\).

We now upgrade local convergence to global strong \(L^2\)-convergence. Set
\[
H_n:=\mathcal F_nf,
\qquad
H:=\mathcal F_0f.
\]
The isometry gives
\begin{equation}
\label{2.9.7}
\|H_n\|_{L^2(0,\infty)}
=
\|H\|_{L^2(0,\infty)}
=
\sqrt{2\pi}\|f\|_{L^2}.
\end{equation}
We claim that \(H_n\rightharpoonup H\) weakly in \(L^2(0,\infty)\). Indeed, let
\(\phi\in L^2(0,\infty)\). Choose \(\phi_c\in L^2(0,\infty)\) with compact support in
\((0,\infty)\) such that
\[
\|\phi-\phi_c\|_{L^2}<\varepsilon.
\]
Using \eqref{2.9.6} on a compact interval containing the support of \(\phi_c\), we obtain
\[
\langle H_n-H,\phi_c\rangle_{L^2(0,\infty)}\to0.
\]
Moreover, by \eqref{2.9.7},
\[
|\langle H_n-H,\phi-\phi_c\rangle|
\le
\bigl(\|H_n\|_{L^2}+\|H\|_{L^2}\bigr)\|\phi-\phi_c\|_{L^2}
\le C_f\varepsilon.
\]
Since \(\varepsilon>0\) is arbitrary, \(H_n\rightharpoonup H\) weakly in
\(L^2(0,\infty)\).

Finally, weak convergence together with equality of the norms in \eqref{2.9.7} gives
\[
\|H_n-H\|_{L^2(0,\infty)}\to0.
\]
Thus
\[
\mathcal F_nf\to \mathcal F_0f
\qquad\text{in }L^2(0,\infty).
\]
This proves part \((3)\).

\medskip
\noindent\textbf{Step 4: convergence of \(\widetilde u_n\) in \(L^2\).}
We write
\[
\widetilde u_n-\widetilde u_0
=
\mathcal F_n(u_n-u_0)+(\mathcal F_n-\mathcal F_0)u_0.
\]
For the first term, the isometry gives
\[
\|\mathcal F_n(u_n-u_0)\|_{L^2(0,\infty)}
=
\sqrt{2\pi}\|u_n-u_0\|_{L^2(\mathbb R)}
\to0 \quad \text{ as } n \to \infty.
\]
For the second term, part \((3)\), applied to \(f=u_0\), gives
\[
(\mathcal F_n-\mathcal F_0)u_0\to0
\qquad\text{in }L^2(0,\infty) \quad \text{ as } n \to \infty.
\]
Therefore
\[
\widetilde u_n\to\widetilde u_0
\qquad\text{in }L^2(0,\infty) \quad \text{ as } n \to \infty.
\]
This proves part \((4)\).

\end{proof}
\noindent
We close this section with the zero-energy normalization of the generalized
eigenfunction. This elementary limit will be used later when differentiating the
distorted Fourier representation at the spectral endpoint \(\lambda=0\).
\begin{lemma}
\label{lemma 2.5}
Let \(u\in L^{2}_+(\mathbb R)\). For
\(\lambda>0\), let
\[
m_-(x,\lambda)=e^{i\lambda x}a(x,\lambda)
\]
be the generalized eigenfunction normalized by
\[
\ell_{m_-}(-\infty)=1.
\]
Then, for every fixed \(x\in\mathbb R\),
\[
\lim_{\lambda\to0_{+}}m_-(x,\lambda)=1.
\]
\end{lemma}

\begin{proof}
Recall that \(a_\lambda=a(\cdot,\lambda)\) solves
\[
(I-K_\lambda)a_\lambda=1,
\]
where
\[
K_\lambda b(x)
:=
-i\int_{-\infty}^x
e^{-i\lambda y}u(y)
\Pi\bigl(\overline u\,e^{i\lambda\cdot}b\bigr)(y)\,dy .
\]
At \(\lambda=0\), define
\[
K_0 b(x)
:=
-i\int_{-\infty}^x
u(y)\Pi(\overline u\,b)(y)\,dy .
\]
We first prove that
\begin{equation}
\label{2.5.1}
\|K_\lambda-K_0\|_{\mathcal L(L^\infty,L^\infty)}
\longrightarrow0
\qquad\text{as }\lambda\to0_{+}.
\end{equation}
Indeed, let \(\|b\|_{L^\infty}\le1\). Then
\[
\begin{aligned}
(K_\lambda-K_0)b(x)
&=
-i\int_{-\infty}^x
\bigl(e^{-i\lambda y}-1\bigr)u(y)
\Pi\bigl(\overline u\,e^{i\lambda\cdot}b\bigr)(y)\,dy  \\
&\quad
-i\int_{-\infty}^x
u(y)
\Pi\bigl(\overline u\,(e^{i\lambda\cdot}-1)b\bigr)(y)\,dy .
\end{aligned}
\]
By the Cauchy--Schwarz inequality and the \(L^2\)-boundedness of \(\Pi\),
\[
\begin{aligned}
\|(K_\lambda-K_0)b\|_{L^\infty}
&\le
\|(e^{-i\lambda\cdot}-1)u\|_{L^2}
\left\|
\Pi\bigl(\overline u\,e^{i\lambda\cdot}b\bigr)
\right\|_{L^2}  
+\|u\|_{L^2}
\left\|
\Pi\bigl(\overline u\,(e^{i\lambda\cdot}-1)b\bigr)
\right\|_{L^2}  \\
&\le
\|(e^{-i\lambda\cdot}-1)u\|_{L^2}\|u\|_{L^2}
+
\|u\|_{L^2}\|(e^{i\lambda\cdot}-1)u\|_{L^2}.
\end{aligned}
\]
Since \(u\in L^2(\mathbb R)\), dominated convergence gives
\[
\|(e^{\pm i\lambda\cdot}-1)u\|_{L^2}
\to0
\qquad\text{as }\lambda\to0_{+}.
\]
Taking the supremum over all \(\|b\|_{L^\infty}\le1\), we obtain \eqref{2.5.1}.

By Remark \ref{remark 2.7}, the operator
\[
I-K_0:L^\infty(\mathbb R)\to L^\infty(\mathbb R)
\]
is invertible, and the unique solution of
\[
(I-K_0)a=1
\]
is
\[
a_0=1.
\]
Set
\[
R_0:=(I-K_0)^{-1}.
\]
From \eqref{2.5.1}, for all sufficiently small \(\lambda>0\),
\[
\|(K_\lambda-K_0)R_0\|_{\mathcal L(L^\infty,L^\infty)}
\le \frac12 .
\]
From \eqref{2.3} we infer
\begin{equation}
\label{2.5.2}
\sup_{\lambda>0}
\|(I-K_\lambda)^{-1}\|_{\mathcal L(L^\infty,L^\infty)}
<\infty .
\end{equation}
By the second resolvent identity,
\[
(I-K_\lambda)^{-1}-(I-K_0)^{-1}
=
(I-K_\lambda)^{-1}(K_\lambda-K_0)(I-K_0)^{-1}.
\]
Applying this identity to \(1\), and using
\[
a_\lambda=(I-K_\lambda)^{-1}1,
\qquad
a_0=(I-K_0)^{-1}1=1,
\]
we obtain
\[
a_\lambda-1
=
(I-K_\lambda)^{-1}(K_\lambda-K_0)1.
\]
Hence, by \eqref{2.5.1} and \eqref{2.5.2},
\begin{equation}
\label{2.5.3}
\|a_\lambda-1\|_{L^\infty}
\le
\|(I-K_\lambda)^{-1}\|_{\mathcal L(L^\infty,L^\infty)}
\|K_\lambda-K_0\|_{\mathcal L(L^\infty,L^\infty)}
\|1\|_{L^\infty}  
\longrightarrow0
\qquad\text{as }\lambda\to0_{+}.
\end{equation}
Finally, for each fixed \(x\in\mathbb R\),
\[
m_-(x,\lambda)-1
=
e^{i\lambda x}\bigl(a(x,\lambda)-1\bigr)
+
\bigl(e^{i\lambda x}-1\bigr).
\]
The first term tends to \(0\) by \eqref{2.5.3}, and the second term tends to \(0\)
because \(x\) is fixed. Therefore
\[
\lim_{\lambda\to0_{+}}m_-(x,\lambda)=1.
\]
This proves the lemma.
\end{proof}
The preceding results provide the spectral framework needed in the rest of the paper:
the generalized eigenfunctions are uniformly controlled, the distorted Fourier transform
is unitary and onto, and both its distributional extension and its stability under
approximation are available. We now turn to the explicit formula in these distorted
Fourier variables.
\section{Distorted Fourier representation of the explicit formula}
\label{section 2.3}

We have developed the distorted Fourier transform in the previous section. We now use it
to rewrite the explicit formula in distorted Fourier variables. The main point is to
identify the action of \(X^*\) after applying \(\mathcal F_{d,u}\). This produces a
spectral correction term involving the half-line Hilbert transform.

\subsection{The action of \(X^*\) in distorted Fourier variables}

The goal of this subsection is to prove a distorted Fourier representation for \(X^*\).
The first step is to understand $e^{i\lambda x}\partial_\lambda a(x,\lambda)$. This term is not square-integrable in \(x\), so we treat it as a
positive-frequency distribution and use the distributional extension of the distorted
Fourier transform.

We first analyze the \(\lambda\)-derivative of the generalized eigenfunction. Recall that
the problem
\[
(L_u-\lambda)m_-=0,
\qquad
\ell_{m_-}(-\infty)=1,
\]
is equivalent to writing
\[
m_-(x,\lambda)=e^{i\lambda x}a(x,\lambda),
\]
where
\[
a_\lambda:=a(\cdot,\lambda)
\]
solves
\[
a_\lambda=1+K_\lambda a_\lambda,
\]
with
\[
K_\lambda b(x)
:=
-i\int_{-\infty}^x
e^{-i\lambda y}u(y)
\Pi\bigl(\overline u\,e^{i\lambda\cdot}b\bigr)(y)\,dy .
\]
In this subsection we assume
\[
u\in\mathcal S_+(\mathbb R).
\]
Then the map
\[
\lambda\mapsto a_\lambda
\]
is \(C^\infty\) as an \(L^\infty(\mathbb R)\)-valued function. Differentiating
\[
(I-K_\lambda)a_\lambda=1
\]
with respect to \(\lambda\), we obtain
\[
(I-K_\lambda)\partial_\lambda a_\lambda
=
(\partial_\lambda K_\lambda)a_\lambda .
\]
We now compute the right-hand side. Set
\[
g_\lambda(x):=\overline u(x)e^{i\lambda x}a(x,\lambda).
\]
Using the commutator identity
\[
\Pi(xg)-x\Pi g
=
\frac1{2i\pi}\widehat g(0),
\]
we get
\[
(\partial_\lambda K_\lambda)a_\lambda(x)
=
\frac1{2i\pi}\widehat g_\lambda(0)
\int_{-\infty}^x e^{-i\lambda y}u(y)\,dy .
\]
Moreover,
\[
\widehat g_\lambda(0)
=
\int_{\mathbb R}
\overline u(y)e^{i\lambda y}a(y,\lambda)\,dy
=
\overline{\widetilde u(\lambda)}.
\]
Therefore
\begin{equation}
\label{eq:dlambda-a-equation}
\partial_\lambda a(x,\lambda)
=
\frac1{2i\pi}
\overline{\widetilde u(\lambda)}
\int_{-\infty}^x e^{-i\lambda y}u(y)\,dy
+
K_\lambda(\partial_\lambda a_\lambda)(x).
\end{equation}
Define
\[
n(x,\lambda):=e^{i\lambda x}\partial_\lambda a(x,\lambda).
\]
Then \(n(\cdot,\lambda)\in L^\infty_+(\mathbb R)\). Indeed, after multiplying
\eqref{eq:dlambda-a-equation} by \(e^{i\lambda x}\), every term is obtained by applying
the retarded free resolvent kernel
\[
F\mapsto \int_{-\infty}^x e^{i\lambda(x-y)}F(y)\,dy
\]
to a positive-frequency source. The first source is \(u\in\mathcal S_+(\mathbb R)\), and
the second source is
\[
u\,\Pi\bigl(\overline u\,e^{i\lambda\cdot}\partial_\lambda a_\lambda\bigr),
\]
which has nonnegative Fourier support. Hence \(n(\cdot,\lambda)\) has nonnegative Fourier
support. The same argument used in Proposition \ref{prop:generalized_eigenfunctions} for \(m\) give boundedness.

Moreover, \eqref{eq:dlambda-a-equation} is exactly the equation associated with
the inhomogeneous eigenvalue equation
\begin{equation}
\label{eq:n-inhomogeneous-equation}
(L_u-\lambda)n(\cdot,\lambda)
=
-\frac1{2\pi}
\overline{\widetilde u(\lambda)}\,u
\end{equation}
in the distributional sense.

Since
\[
n(\cdot,\lambda)\in L^\infty_+(\mathbb R)
\subset \mathcal S_+'(\mathbb R),
\]
Corollary \ref{corollary 3.7} allows us to define the distorted Fourier transform of
\(n(\cdot,\lambda)\) as a distribution in \(\mathcal S_*'\). We denote it by
\[
\widetilde n(\mu,\lambda):=\mathcal F_{d,u}[n(\cdot,\lambda)](\mu).
\]
Applying the distorted Fourier transform to \eqref{eq:n-inhomogeneous-equation} in the
distributional sense, and using
\[
\mathcal F_{d,u}(L_uF)(\mu)
=
\mu\,\mathcal F_{d,u}F(\mu),
\qquad F\in\mathcal S_+'(\mathbb R),
\]
we obtain
\begin{equation}
\label{eq:n-spectral-equation}
(\mu-\lambda)\widetilde n(\mu,\lambda)
=
-\frac1{2\pi}
\widetilde u(\mu)\overline{\widetilde u(\lambda)}.
\end{equation}
Hence, as a distribution in the spectral variable \(\mu\),
\begin{equation}
\label{eq:n-spectral-distribution}
\widetilde n(\mu,\lambda)
=
-\frac1{2\pi}
\operatorname{p.v.}
\frac{\widetilde u(\mu)\overline{\widetilde u(\lambda)}}{\mu-\lambda}
+
2\pi c(\lambda)\delta(\mu-\lambda),
\end{equation}
where \(c(\lambda)\) is to be determined from the normalization at \(x=-\infty\).

Equivalently, by the inverse distorted Fourier transform,
\begin{equation}
\label{3.51}
n(x,\lambda)
=
n_1(x,\lambda)+c(\lambda)m_-(x,\lambda),
\end{equation}
where
\begin{equation}
\label{eq:n1-definition}
n_1(x,\lambda)
:=
-\frac1{4\pi^2}
\operatorname{p.v.}\int_0^\infty
\frac{\widetilde u(\mu)\overline{\widetilde u(\lambda)}}{\mu-\lambda}
m_-(x,\mu)\,d\mu .
\end{equation}
The scalar \(c(\lambda)\) will be determined by the asymptotic behavior at
\(x=-\infty\).

It remains to determine the scalar \(c(\lambda)\). Since the normalization of
\(n(\cdot,\lambda)\) is imposed at \(x=-\infty\), it is enough to understand the
asymptotic behavior of \(e^{-i\lambda x}n_1(x,\lambda)\) as \(x\to-\infty\). This is the content of the next lemma.
\begin{lemma}
\label{lemma 3.8}
Let \(u\in\mathcal S_+(\mathbb R)\). For \(\lambda>0\), define
\[
n_1(x,\lambda)
:=
-\frac{1}{4\pi^2}\,
\mathrm{p.v.}\int_0^\infty
\frac{\widetilde u(\mu)\overline{\widetilde u(\lambda)}}{\mu-\lambda}
\,m_-(x,\mu)\,d\mu .
\]
Then, for every fixed \(\lambda>0\),
\[
e^{-i\lambda x}n_1(x,\lambda)
\longrightarrow
\frac{i}{4\pi}|\widetilde u(\lambda)|^2
\qquad\text{as }x\to-\infty .
\]
\end{lemma}
\begin{proof}
Fix \(\lambda>0\). Write
\[
m_-(x,\mu)=e^{i\mu x}a(x,\mu),
\qquad
b(x,\mu):=a(x,\mu)-1.
\]
Then
\[
m_-(x,\mu)=e^{i\mu x}\bigl(1+b(x,\mu)\bigr),
\]
and hence
\[
e^{-i\lambda x}n_1(x,\lambda)
=
I_1(x,\lambda)+I_2(x,\lambda),
\]
where
\[
I_1(x,\lambda)
:=
-\frac{e^{-i\lambda x}}{4\pi^2}
\,\mathrm{p.v.}\int_0^\infty
\frac{\widetilde u(\mu)\overline{\widetilde u(\lambda)}}{\mu-\lambda}
e^{i\mu x}\,d\mu ,
\]
and
\[
I_2(x,\lambda)
:=
-\frac{e^{-i\lambda x}}{4\pi^2}
\,\mathrm{p.v.}\int_0^\infty
\frac{\widetilde u(\mu)\overline{\widetilde u(\lambda)}}{\mu-\lambda}
e^{i\mu x}b(x,\mu)\,d\mu .
\]
We first compute the limit of \(I_1\). Since \(u\in\mathcal S_+(\mathbb R)\), by
Proposition \ref{proposition 3.6} we have
\[
\widetilde u\in\mathcal S_* .
\]
Choose \(\chi\in C_c^\infty((0,\infty))\) such that
\[
\chi(\mu)=1
\]
in a neighborhood of \(\lambda\). Define
\[
g_\lambda(\mu)
:=
\frac{\widetilde u(\mu)-\widetilde u(\lambda)\chi(\mu)}{\mu-\lambda},
\]
where the value at \(\mu=\lambda\) is defined by continuity. Since \(\chi=1\) near
\(\lambda\) and \(\widetilde u\in\mathcal S_*\), we have
\[
g_\lambda\in L^1(0,\infty).
\]
Therefore, by the Riemann--Lebesgue lemma,
\begin{equation}
\label{3.8.1}
e^{-i\lambda x}
\int_0^\infty g_\lambda(\mu)e^{i\mu x}\,d\mu
\longrightarrow0
\qquad\text{as }x\to-\infty .
\end{equation}
It remains to evaluate the localized constant part. With the change of variables
\(s=\mu-\lambda\), we get
\[
e^{-i\lambda x}
\,\mathrm{p.v.}\int_0^\infty
\frac{\chi(\mu)e^{i\mu x}}{\mu-\lambda}\,d\mu
=
\mathrm{p.v.}\int_{-\lambda}^{\infty}
\frac{\chi(\lambda+s)e^{isx}}{s}\,ds .
\]
Choose \(\delta>0\) such that \(\delta<\lambda\) and
\[
\chi(\lambda+s)=1
\qquad\text{for } |s|<\delta .
\]
Then
\[
\mathrm{p.v.}\int_{-\lambda}^{\infty}
\frac{\chi(\lambda+s)e^{isx}}{s}\,ds
=
\mathrm{p.v.}\int_{-\delta}^{\delta}
\frac{e^{isx}}{s}\,ds
+
\int_{\{|s|\ge\delta\}\cap[-\lambda,\infty)}
\frac{\chi(\lambda+s)e^{isx}}{s}\,ds .
\]
The second term tends to \(0\) as \(x\to-\infty\) by the Riemann--Lebesgue lemma. For the
first term,
\[
\mathrm{p.v.}\int_{-\delta}^{\delta}
\frac{e^{isx}}{s}\,ds
=
2i\int_0^\delta\frac{\sin(sx)}{s}\,ds
\longrightarrow -i\pi
\qquad\text{as }x\to-\infty .
\]
Hence
\begin{equation}
\label{3.8.2}
\mathrm{p.v.}\int_{-\lambda}^{\infty}
\frac{\chi(\lambda+s)e^{isx}}{s}\,ds
\longrightarrow -i\pi
\qquad\text{as }x\to-\infty .
\end{equation}
Combining \eqref{3.8.1} and \eqref{3.8.2}, we obtain
\begin{equation}
\label{3.8.3}
I_1(x,\lambda)
\longrightarrow
-\frac{1}{4\pi^2}|\widetilde u(\lambda)|^2(-i\pi)
=
\frac{i}{4\pi}|\widetilde u(\lambda)|^2
\qquad\text{as }x\to-\infty .
\end{equation}
It remains to show that
\begin{equation}
\label{3.8.4}
I_2(x,\lambda)\to0
\qquad\text{as }x\to-\infty .
\end{equation}
We first record two tail estimates for \(b(x,\mu)\). Since
\[
b(x,\mu)=K_\mu a_\mu(x)
=
-i\int_{-\infty}^x
e^{-i\mu y}u(y)
\Pi\bigl(\overline u\,e^{i\mu\cdot}a_\mu\bigr)(y)\,dy ,
\]
Proposition \ref{prop:uniform_m} and the Cauchy--Schwarz inequality give
\[
\begin{aligned}
|b(x,\mu)|
&\le
\left(\int_{-\infty}^x |u(y)|^2\,dy\right)^{1/2}
\left\|\Pi\bigl(\overline u\,e^{i\mu\cdot}a_\mu\bigr)\right\|_{L^2} \\
&\le
C_u
\left(\int_{-\infty}^x |u(y)|^2\,dy\right)^{1/2}.
\end{aligned}
\]
Hence
\begin{equation}
\label{3.8.5}
\sup_{\mu\ge0}|b(x,\mu)|\to0
\qquad\text{as }x\to-\infty .
\end{equation}
We also need a Lipschitz estimate in \(\mu\) on compact intervals. We claim that
\begin{equation}
\label{3.8.6}
\sup_{0\le\mu\le 2\lambda}
|\partial_\mu b(x,\mu)|
\longrightarrow0
\qquad\text{as }x\to-\infty .
\end{equation}
Indeed, the operator-norm differentiability argument used in the proof of Proposition
\ref{proposition 3.6}, together with the uniform inverse bound from Proposition
\ref{prop:uniform_m}, gives
\begin{equation}
\label{3.8.7}
\sup_{0\le\mu\le2\lambda}
\|\partial_\mu a_\mu\|_{L^\infty}<\infty .
\end{equation}
Set
\[
P_\mu:=\Pi\bigl(\overline u\,e^{i\mu\cdot}a_\mu\bigr).
\]
Then
\[
b(x,\mu)
=
-i\int_{-\infty}^x e^{-i\mu y}u(y)P_\mu(y)\,dy.
\]
Differentiating in \(\mu\), we get
\[
\partial_\mu b(x,\mu)
=
-i\int_{-\infty}^x
(-iy)e^{-i\mu y}u(y)P_\mu(y)\,dy
-i\int_{-\infty}^x
e^{-i\mu y}u(y)\partial_\mu P_\mu(y)\,dy .
\]
By \eqref{3.8.7} and \(u\in\mathcal S_{+}(\mathbb R)\), and by the \(L^2\)-boundedness of
\(\Pi\), both
\[
\sup_{0\le\mu\le2\lambda}\|P_\mu\|_{L^2}
\quad\text{and}\quad
\sup_{0\le\mu\le2\lambda}\|\partial_\mu P_\mu\|_{L^2}
\]
are finite. Thus
\[
\begin{aligned}
|\partial_\mu b(x,\mu)|
&\le
\left(\int_{-\infty}^x |y u(y)|^2\,dy\right)^{1/2}
\|P_\mu\|_{L^2}  \\
&\quad+
\left(\int_{-\infty}^x |u(y)|^2\,dy\right)^{1/2}
\|\partial_\mu P_\mu\|_{L^2}.
\end{aligned}
\]
Taking the supremum over \(0\le\mu\le2\lambda\) and using \(u\in\mathcal S_{+}(\mathbb R)\),
we obtain \eqref{3.8.6}.

We now estimate \(I_2\). Split
\[
I_2=I_{2,\mathrm{near}}+I_{2,\mathrm{far}},
\]
where \(I_{2,\mathrm{near}}\) corresponds to \(0\le\mu\le2\lambda\), and
\(I_{2,\mathrm{far}}\) corresponds to \(\mu\ge2\lambda\).

For the near part, set
\[
H_x(\mu):=\widetilde u(\mu)b(x,\mu).
\]
Then
\begin{equation}
\label{3.8.8}
\begin{aligned}
I_{2,\mathrm{near}}(x,\lambda)
&=
-\frac{\overline{\widetilde u(\lambda)}}{4\pi^2}
\,\mathrm{p.v.}\int_0^{2\lambda}
\frac{e^{i(\mu-\lambda)x}H_x(\mu)}{\mu-\lambda}\,d\mu  \\
&=
-\frac{\overline{\widetilde u(\lambda)}}{4\pi^2}
H_x(\lambda)
\,\mathrm{p.v.}\int_0^{2\lambda}
\frac{e^{i(\mu-\lambda)x}}{\mu-\lambda}\,d\mu  \\
&\quad
-\frac{\overline{\widetilde u(\lambda)}}{4\pi^2}
\int_0^{2\lambda}
e^{i(\mu-\lambda)x}
\frac{H_x(\mu)-H_x(\lambda)}{\mu-\lambda}\,d\mu .
\end{aligned}
\end{equation}
The principal value integral in the first term is uniformly bounded in \(x\), since
\[
\mathrm{p.v.}\int_0^{2\lambda}
\frac{e^{i(\mu-\lambda)x}}{\mu-\lambda}\,d\mu
=
\mathrm{p.v.}\int_{-\lambda}^{\lambda}
\frac{e^{isx}}{s}\,ds
=
i\int_{-\lambda}^{\lambda}
\frac{\sin(sx)}{s}\,ds .
\]
By \eqref{3.8.5},
\[
|H_x(\lambda)|\le |\widetilde u(\lambda)|\,|b(x,\lambda)|\longrightarrow0 \qquad\text{as }x\to-\infty.
\]
For the second term in \eqref{3.8.8}, the mean value theorem gives
\[
\left|
\frac{H_x(\mu)-H_x(\lambda)}{\mu-\lambda}
\right|
\le
\sup_{0\le\nu\le2\lambda}|H_x'(\nu)|.
\]
Since
\[
H_x'(\nu)=\widetilde u'(\nu)b(x,\nu)+\widetilde u(\nu)\partial_\nu b(x,\nu),
\]
\eqref{3.8.5} and \eqref{3.8.6} imply
\[
\sup_{0\le\nu\le2\lambda}|H_x'(\nu)|\to0
\qquad\text{as }x\to-\infty .
\]
Therefore
\begin{equation}
\label{3.8.9}
I_{2,\mathrm{near}}(x,\lambda)\to0
\qquad\text{as }x\to-\infty .
\end{equation}
For the far part, using \eqref{3.8.5}, we have
\[
\begin{aligned}
|I_{2,\mathrm{far}}(x,\lambda)|
&\le
\frac{|\widetilde u(\lambda)|}{4\pi^2}
\sup_{\mu\ge0}|b(x,\mu)|
\int_{2\lambda}^{\infty}
\frac{|\widetilde u(\mu)|}{\mu-\lambda}\,d\mu .
\end{aligned}
\]
The integral is finite because \(\widetilde u\in\mathcal S_*\), and the supremum tends to
\(0\) by \eqref{3.8.5}. Thus
\begin{equation}
\label{3.8.10}
I_{2,\mathrm{far}}(x,\lambda)\to0
\qquad\text{as }x\to-\infty .
\end{equation}
Combining \eqref{3.8.9} and \eqref{3.8.10}, we obtain \eqref{3.8.4}.

Finally, \eqref{3.8.3} and \eqref{3.8.4} give
\[
e^{-i\lambda x}n_1(x,\lambda)
\longrightarrow
\frac{i}{4\pi}|\widetilde u(\lambda)|^2
\qquad\text{as }x\to-\infty .
\]
This proves the lemma.
\end{proof}

\noindent
Lemma \ref{lemma 3.8} fixes the normalization constant in the distributional spectral
formula for \(n\). We therefore obtain the following explicit expression for
\(n\).

\begin{corollary}
\label{corollary 2.13}
Let \(u\in\mathcal S_+(\mathbb R)\). For every \(\lambda>0\), we have
\begin{equation}
\label{3.9}
n(x,\lambda)
=
n_1(x,\lambda)
-
\frac{i}{4\pi}|\widetilde u(\lambda)|^2m_-(x,\lambda).
\end{equation}
\end{corollary}

\begin{proof}
Recall from \eqref{3.51} that
\[
n(x,\lambda)=n_1(x,\lambda)+c(\lambda)m_-(x,\lambda),
\]
where \(c(\lambda)\) is determined by the normalization at \(-\infty\). More precisely,
since
\[
m_-(x,\lambda)=e^{i\lambda x}a(x,\lambda),
\qquad
a(x,\lambda)\to1
\quad\text{as }x\to-\infty,
\]
and since the chosen normalization of \(n\) is
\[
e^{-i\lambda x}n(x,\lambda)\to0
\qquad\text{as }x\to-\infty,
\]
we get
\[
0
=
\lim_{x\to-\infty}e^{-i\lambda x}n(x,\lambda)
=
\lim_{x\to-\infty}e^{-i\lambda x}n_1(x,\lambda)+c(\lambda).
\]
By Lemma \ref{lemma 3.8},
\[
\lim_{x\to-\infty}e^{-i\lambda x}n_1(x,\lambda)
=
\frac{i}{4\pi}|\widetilde u(\lambda)|^2.
\]
Therefore
\[
c(\lambda)
=
-\frac{i}{4\pi}|\widetilde u(\lambda)|^2,
\]
which proves \eqref{3.9}.
\end{proof}
We now insert this formula into the inverse distorted Fourier representation. The
principal-value term gives rise to a half-line Hilbert transform in the spectral variable,
while the Lax operator term produces a multiplication operator. This yields the precise
spectral form of \(X^*\).
\begin{proposition}[The action of \(X^*\) under the distorted Fourier transform]
\label{prop:Xstar_under_Fd}
Let \(u\in\mathcal S_+(\mathbb R)\), and set
\[
\mathfrak u:=\widetilde u:=\mathcal F_{d,u}u.
\]
Let
\[
\widetilde f\in H^1(0,\infty),
\qquad
f:=\mathcal F_{d,u}^{-1}\widetilde f.
\]
Then
\[
f\in\operatorname{Dom}(X^*),
\qquad
I_+(f)=J_+(\widetilde f):=\widetilde f(0_{+}),
\]
and
\[
\mathcal F_{d,u}(X^*f)
=
i\partial_\lambda\widetilde f
+
B(\mathfrak u)\widetilde f,
\]
where
\[
B(\mathfrak u)g
:=
-\frac1{4\pi}
\left(
i\mathfrak u\,H(\overline{\mathfrak u}g)
-
|\mathfrak u|^2g
\right).
\]
Here \(H\) denotes the Hilbert transform on the positive half-line
\[
H h(\mu)
:=
\frac1\pi\operatorname{p.v.}
\int_0^\infty\frac{h(\lambda)}{\mu-\lambda}\,d\lambda,\quad \mu >0.
\]
\end{proposition}

\begin{proof}
We first prove the identity for
\[
\widetilde f\in C_c^\infty([0,\infty)),
\]
and then pass to general \(\widetilde f\in H^1(0,\infty)\) by density. Let
\[
f:=\mathcal F_{d,u}^{-1}\widetilde f,
\qquad
\chi_\varepsilon(\lambda):=e^{-\varepsilon\lambda^2}.
\]
All identities involving \(xf\) below are understood in the sense of tempered
distributions in the \(x\)-variable until the \(L^2\)-conclusion is obtained.

Using the inverse distorted Fourier formula for \(L^1\cap L^2\) spectral data, obtained
by density from Proposition \ref{proposition 3.6}, we have
\[
2\pi\mathcal F_{d,u}^{-1}(i\partial_\lambda\widetilde f)
=
i\lim_{\varepsilon\to0_{+}}
\int_0^\infty
\partial_\lambda\widetilde f(\lambda)
\chi_\varepsilon(\lambda)m_-(x,\lambda)\,d\lambda .
\]
Integrating by parts in \(\lambda\), and using Lemma \ref{lemma 2.5}, namely
\[
m_-(x,\lambda)\to1
\qquad\text{as }\lambda\to 0_+,
\]
we obtain
\begin{equation}
\label{eq:Xstar-ibp}
\begin{aligned}
2\pi\mathcal F_{d,u}^{-1}(i\partial_\lambda\widetilde f)
&=
-i\widetilde f(0_+)
-i\lim_{\varepsilon\to0_+}
\int_0^\infty
\widetilde f(\lambda)\chi_\varepsilon'(\lambda)m_-(x,\lambda)\,d\lambda  \\
&\quad
-i\lim_{\varepsilon\to0_+}
\int_0^\infty
\widetilde f(\lambda)\chi_\varepsilon(\lambda)
\partial_\lambda m_-(x,\lambda)\,d\lambda .
\end{aligned}
\end{equation}
Since
\[
\|\chi_\varepsilon'\widetilde f\|_{L^1(0,\infty)}\to0 \text{ as } \varepsilon \to 0_+,
\]
the middle term in \eqref{eq:Xstar-ibp} tends to \(0\). Moreover,
\[
\partial_\lambda m_-(x,\lambda)=ixm_-(x,\lambda)+n(x,\lambda).
\]
Therefore
\begin{equation}
\label{eq:Xstar-after-n}
\begin{aligned}
2\pi\mathcal F_{d,u}^{-1}(i\partial_\lambda\widetilde f)
=-i\widetilde f(0_+)
+
2\pi xf 
-i\lim_{\varepsilon\to0_+}
\int_0^\infty
\widetilde f(\lambda)\chi_\varepsilon(\lambda)n(x,\lambda)\,d\lambda .
\end{aligned}
\end{equation}
By Corollary \ref{corollary 2.13},
\[
n(x,\lambda)
=
n_1(x,\lambda)
-
\frac{i}{4\pi}|\mathfrak u(\lambda)|^2m_-(x,\lambda).
\]
The contribution of the second term is
\[
-\frac1{4\pi}
\mathcal F_{d,u}^{-1}\left(|\mathfrak u|^2\widetilde f\right),
\]
because \(\chi_\varepsilon|\mathfrak u|^2\widetilde f\to
|\mathfrak u|^2\widetilde f\) in \(L^2(0,\infty)\). It remains to justify and compute the
\(n_1\)-contribution.

We do not compute this term by applying \(\mathcal F_{d,u}\) directly to the singular
principal-value kernel. Instead, we first truncate the principal value. For
\(\delta>0\), define
\[
n_{1,\delta}(x,\lambda)
:=
-\frac1{4\pi^2}
\int_{\substack{0<\mu<\infty\\ |\mu-\lambda|>\delta}}
\frac{\mathfrak u(\mu)\overline{\mathfrak u(\lambda)}}{\mu-\lambda}
m_-(x,\mu)\,d\mu ,
\]
and set
\begin{equation}
\label{eq:Xstar-r3-eps-delta}
r_{3,\varepsilon,\delta}(x)
:=
-\frac{i}{2\pi}
\int_0^\infty
\widetilde f(\lambda)\chi_\varepsilon(\lambda)
n_{1,\delta}(x,\lambda)\,d\lambda .
\end{equation}
Since \(\widetilde f\) is compactly supported and \(\mathfrak u\in\mathcal S_*\), Fubini's
theorem is applicable for fixed \(\varepsilon,\delta>0\). Thus
\begin{equation}
\label{eq:Xstar-r3-eps-delta-inverse}
r_{3,\varepsilon,\delta}
=
\mathcal F_{d,u}^{-1}g_{\varepsilon,\delta},
\end{equation}
where
\begin{equation}
\label{eq:Xstar-g-eps-delta}
g_{\varepsilon,\delta}(\mu)
:=
\frac{i}{4\pi^2}\mathfrak u(\mu)
\int_{\substack{0<\lambda<\infty\\ |\mu-\lambda|>\delta}}
\frac{\overline{\mathfrak u(\lambda)}
\widetilde f(\lambda)\chi_\varepsilon(\lambda)}
{\mu-\lambda}\,d\lambda .
\end{equation}
Let
\[
h_\varepsilon(\lambda)
:=
\overline{\mathfrak u(\lambda)}
\widetilde f(\lambda)\chi_\varepsilon(\lambda).
\]
With the half-line Hilbert transform convention
\[
Hg(\mu)
:=
\frac1\pi\operatorname{p.v.}
\int_0^\infty\frac{g(\lambda)}{\mu-\lambda}\,d\lambda,
\]
we have
\[
g_{\varepsilon,\delta}
=
\frac{i}{4\pi}\mathfrak u\,H_\delta h_\varepsilon,
\]
where \(H_\delta\) denotes the corresponding truncated Hilbert transform. Since the
truncated Hilbert transforms converge to \(H\) in \(L^2(0,\infty)\), and since \(H\) is
bounded on \(L^2(0,\infty)\), we obtain
\[
g_{\varepsilon,\delta}
\longrightarrow
g_\varepsilon
:=
\frac{i}{4\pi}\mathfrak u\,Hh_\varepsilon
\qquad\text{in }L^2(0,\infty)
\]
as \(\delta\to0_+\). Hence, by the \(L^2\)-unitarity of the normalized distorted Fourier
transform, as \(\delta\to0_+\),
\[
r_{3,\varepsilon,\delta}
\longrightarrow
r_{3,\varepsilon}
:=
\mathcal F_{d,u}^{-1}g_\varepsilon
\qquad\text{in }L^2_+(\mathbb R).
\]
Next, since
\[
h_\varepsilon
\to
\overline{\mathfrak u}\,\widetilde f
\qquad\text{in }L^2(0,\infty)
\]
as \(\varepsilon\to0_{+}\), another use of the \(L^2\)-boundedness of \(H\) gives
\[
g_\varepsilon
\longrightarrow
g
:=
\frac{i}{4\pi}
\mathfrak u\,H(\overline{\mathfrak u}\widetilde f)
\qquad\text{in }L^2(0,\infty).
\]
Therefore the \(n_1\)-contribution has the \(L^2\)-limit
\[
r_3:=\mathcal F_{d,u}^{-1}g,
\]
and its distorted Fourier transform is
\begin{equation}
\label{eq:Xstar-r3-transform}
\widetilde r_3(\mu)
=
\frac{i}{4\pi}
\mathfrak u(\mu)
H(\overline{\mathfrak u}\widetilde f)(\mu).
\end{equation}
Combining \eqref{eq:Xstar-after-n} with the preceding computation gives
\begin{equation}
\label{eq:Xstar-after-cor}
\mathcal F_{d,u}^{-1}(i\partial_\lambda\widetilde f)
=
-\frac{i}{2\pi}\widetilde f(0_{+})
+
xf
+
r_3
-
\frac1{4\pi}
\mathcal F_{d,u}^{-1}\left(|\mathfrak u|^2\widetilde f\right).
\end{equation}
By \eqref{eq:Xstar-r3-transform},
\begin{equation}
\label{eq:Xstar-r3-combined}
r_3
-
\frac1{4\pi}
\mathcal F_{d,u}^{-1}\left(|\mathfrak u|^2\widetilde f\right)
=
\frac1{4\pi}
\mathcal F_{d,u}^{-1}
\left(
i\mathfrak u\,H(\overline{\mathfrak u}\widetilde f)
-
|\mathfrak u|^2\widetilde f
\right).
\end{equation}
The right-hand side of \eqref{eq:Xstar-r3-combined} belongs to \(L^2_+(\mathbb R)\),
because \(\mathfrak u\in\mathcal S_*\), \(\widetilde f\in L^2(0,\infty)\), and \(H\) is
bounded on \(L^2(0,\infty)\). Also
\[
\mathcal F_{d,u}^{-1}(i\partial_\lambda\widetilde f)\in L^2_+(\mathbb R).
\]
It follows from \eqref{eq:Xstar-after-cor} and \eqref{eq:Xstar-r3-combined} that
\begin{equation}
\label{eq:Xstar-domain-correction}
xf-\frac{i}{2\pi}\widetilde f(0_+)
=
xf+\frac1{2i\pi}\widetilde f(0_+)
\in L^2(\mathbb R).
\end{equation}
We now identify the boundary functional. By \eqref{1.51}, the existence of a constant
\[
\lambda_f:=\frac1{2i\pi}\widetilde f(0_+)
\]
such that \(xf+\lambda_f\in L^2(\mathbb R)\) implies
\[
f\in\operatorname{Dom}(X^*).
\]
By the definition \eqref{1.121} and the identity \eqref{1.9}, we have
\[
X^*f
=
xf+\frac1{2i\pi}I_+(f).
\]
On the other hand, \eqref{eq:Xstar-domain-correction} shows that
\[
xf+\frac1{2i\pi}\widetilde f(0_+)\in L^2(\mathbb R).
\]
The correction constant in the definition \eqref{1.51} is unique, since a nonzero
constant function does not belong to \(L^2(\mathbb R)\). Therefore
\begin{equation}
\label{eq:Xstar-boundary-test}
I_+(f)=\widetilde f(0_+).
\end{equation}
Using \eqref{eq:Xstar-after-cor}, \eqref{eq:Xstar-r3-combined}, and
\eqref{eq:Xstar-boundary-test}, we get
\[
\mathcal F_{d,u}^{-1}(i\partial_\lambda\widetilde f)
=
X^*f
+
\frac1{4\pi}
\mathcal F_{d,u}^{-1}
\left(
i\mathfrak u\,H(\overline{\mathfrak u}\widetilde f)
-
|\mathfrak u|^2\widetilde f
\right).
\]
Applying \(\mathcal F_{d,u}\), we obtain
\begin{equation}
\label{eq:Xstar-final-test}
\mathcal F_{d,u}(X^*f)
=
i\partial_\lambda\widetilde f
-
\frac1{4\pi}
\left(
i\mathfrak u\,H(\overline{\mathfrak u}\widetilde f)
-
|\mathfrak u|^2\widetilde f
\right).
\end{equation}
Equivalently,
\[
\mathcal F_{d,u}(X^*f)
=
i\partial_\lambda\widetilde f
+
B(\mathfrak u)\widetilde f.
\]
This proves the result for \(\widetilde f\in C_c^\infty([0,\infty))\).

It remains to pass to general \(\widetilde f\in H^1(0,\infty)\). Choose
\[
\widetilde f_j\in C_c^\infty([0,\infty))
\]
such that
\[
\widetilde f_j\to\widetilde f
\qquad\text{in }H^1(0,\infty).
\]
Set
\[
f_j:=\mathcal F_{d,u}^{-1}\widetilde f_j.
\]
By the \(L^2\)-unitarity of the normalized distorted Fourier transform,
\[
f_j\to f
\qquad\text{in }L^2_+(\mathbb R).
\]
Moreover, since \(B(\mathfrak u)\) is bounded on \(L^2(0,\infty)\),
\[
i\partial_\lambda\widetilde f_j+B(\mathfrak u)\widetilde f_j
\longrightarrow
i\partial_\lambda\widetilde f+B(\mathfrak u)\widetilde f
\qquad\text{in }L^2(0,\infty).
\]
For each \(j\), the result already proved gives
\[
\mathcal F_{d,u}(X^*f_j)
=
i\partial_\lambda\widetilde f_j+B(\mathfrak u)\widetilde f_j.
\]
Hence \(X^*f_j\) converges in \(L^2_+(\mathbb R)\). Since \(X^*\) is closed, we conclude
that
\[
f\in\operatorname{Dom}(X^*)
\]
and
\begin{equation}
\label{eq:Xstar-final-general}
\mathcal F_{d,u}(X^*f)
=
i\partial_\lambda\widetilde f+B(\mathfrak u)\widetilde f.
\end{equation}
Finally, the trace map
\[
H^1(0,\infty)\ni g\mapsto g(0_+)
\]
is continuous, so
\[
\widetilde f_j(0_+)\to\widetilde f(0_+).
\]
On the other hand, the boundary functional \(I_+\) is continuous for the graph norm of
\(X^*\). Indeed, for \(h\in\operatorname{Dom}(X^*)\),
\[
|I_+(h)|^2
\le
4\pi\|X^*h\|_{L^2}\|h\|_{L^2}.
\]
Applying this to \(h=f_j-f\), and using the graph convergence \(f_j\to f\),
\(X^*f_j\to X^*f\), gives
\[
I_+(f_j)\to I_+(f).
\]
Since
\[
I_+(f_j)=\widetilde f_j(0_+)
\]
for every \(j\), we obtain
\[
I_+(f)=\widetilde f(0_+).
\]
This completes the proof.
\end{proof}
The next elementary lemma shows the corresponding free resolvent formula.
It will be used to separate the free Schrödinger evolution from the defect term in the
spectral representation.
\begin{lemma}[The free resolvent and its boundary value]
\label{lemma:free_resolvent_boundary}
Let \(t\in\mathbb R\), \(z\in\mathbb C_+\), and \(F\in L^2(0,\infty)\). Then
\begin{equation}
\label{3.18}
(i\partial_\lambda+2t\lambda-z)^{-1}F
=
e^{it\lambda^2}(i\partial_\lambda-z)^{-1}e^{-it\lambda^2}F,
\end{equation}
and explicitly
\begin{equation}
\label{3.19}
\bigl((i\partial_\lambda+2t\lambda-z)^{-1}F\bigr)(\lambda)
=
i e^{it\lambda^2-iz\lambda}
\int_\lambda^\infty
e^{-it\eta^2+iz\eta}F(\eta)\,d\eta .
\end{equation}
Moreover, if \(v\in L^2_+(\mathbb R)\) is defined by
\[
\widehat v(\xi)=F(\xi)\mathbf 1_{\xi>0},
\]
then
\begin{equation}
\label{3.20}
\frac1{2i\pi}
J_+\left((i\partial_\lambda+2t\lambda-z)^{-1}F\right)
=
e^{it\partial_x^2}v(z).
\end{equation}
\end{lemma}

\begin{proof}
Let \(U_t\) denote multiplication by \(e^{it\lambda^2}\). On the natural domain of the
half-line operator \(i\partial_\lambda\), we have
\[
U_t(i\partial_\lambda-z)U_t^{-1}
=
i\partial_\lambda+2t\lambda-z.
\]
This gives \eqref{3.18}. The resolvent formula for \(z\in\mathbb C_+\) is
\[
(i\partial_\lambda-z)^{-1}G(\lambda)
=
i e^{-iz\lambda}
\int_\lambda^\infty e^{iz\eta}G(\eta)\,d\eta ,
\qquad G\in L^2(0,\infty).
\]
Taking
\[
G(\eta)=e^{-it\eta^2}F(\eta)
\]
and using \eqref{3.18} gives \eqref{3.19}.

Since \(\Im z>0\), the integral in \eqref{3.19} is absolutely convergent at
\(\lambda=0\) by the Cauchy--Schwarz inequality. Evaluating \eqref{3.19} at
\(\lambda=0_{+}\), we obtain
\[
J_+\left((i\partial_\lambda+2t\lambda-z)^{-1}F\right)
=
i\int_0^\infty e^{-it\eta^2+iz\eta}F(\eta)\,d\eta .
\]
Therefore
\[
\frac1{2i\pi}
J_+\left((i\partial_\lambda+2t\lambda-z)^{-1}F\right)
=
\frac1{2\pi}
\int_0^\infty e^{-it\eta^2+iz\eta}F(\eta)\,d\eta .
\]
On the other hand, since
\[
\widehat v(\xi)=F(\xi)\mathbf 1_{\xi>0},
\]
the positive-frequency point-evaluation formula gives
\[
e^{it\partial_x^2}v(z)
=
\frac1{2\pi}
\int_0^\infty e^{iz\eta}e^{-it\eta^2}F(\eta)\,d\eta .
\]
Thus \eqref{3.20} follows.
\end{proof}
We have now identified both the distorted action of \(X^*\) and the free boundary term.
We can therefore return to Gérard-type explicit formula \eqref{1.8} and rewrite it in distorted Fourier variables.

\subsection{Distorted representation}

In this subsection we derive the spectral representation of the defect term
\[
u(t,z)-e^{it\partial_x^2}v_0^-(z).
\]
We first prove the formula for smooth initial data in $S_+(\mathbb R)$, where all distorted Fourier
operations are classical. The rough case will then follow by approximation and the
boundary-functional construction from Lemma \ref{coro a.7}.
\begin{proposition}[Distorted formula]
\label{prop:smooth_defect_formula}
Let \(u_0\in\mathcal S_+(\mathbb R)\), and set
\[
\mathfrak u_0:=\widetilde u_0:=\mathcal F_{d,u_0}u_0.
\]
Let \(u(t)\) be the solution of \eqref{3.1} with initial data \(u_0\), and let
\(v_0^-\in L^2_+(\mathbb R)\) be defined by
\[
\widehat{v_0^-}(\xi)=\mathfrak u_0(\xi)\mathbf 1_{\xi>0}.
\]
Then, for every \(t\in\mathbb R\) and \(z\in\mathbb C_+\),
\begin{equation}
\label{3.21}
u(t,z)
=
\frac1{2i\pi}
J_+\left[
(i\partial_\lambda+B(\mathfrak u_0)+2t\lambda-z)^{-1}\mathfrak u_0
\right],
\end{equation}
and
\begin{equation}
\label{3.22}
\begin{aligned}
u(t,z)-e^{it\partial_x^2}v_0^-(z)
&=
-\frac1{2i\pi}
J_+\left[
(i\partial_\lambda+B(\mathfrak u_0)+2t\lambda-z)^{-1}
B(\mathfrak u_0)
(i\partial_\lambda+2t\lambda-z)^{-1}
\mathfrak u_0
\right] \\
&=
-\frac1{2i\pi}
J_+\left[
A(\mathfrak u_0,t,z)
e^{-it\lambda^2}
B(\mathfrak u_0)
e^{it\lambda^2}
(i\partial_\lambda-z)^{-1}
e^{-it\lambda^2}\mathfrak u_0
\right],
\end{aligned}
\end{equation}
where, in the smooth case,
\begin{equation}
\label{3.23}
A(\mathfrak u_0,t,z)
:=
\left(
i\partial_\lambda
+
e^{-it\lambda^2}B(\mathfrak u_0)e^{it\lambda^2}
-z
\right)^{-1}.
\end{equation}
\end{proposition}

\begin{proof}
By Theorem \ref{theorem 1.3},
\begin{equation}
\label{3.24}
u(t,z)
=
\frac1{2i\pi}
I_+\left[
(X^*+2tL_{u_0}-z\operatorname{Id})^{-1}u_0
\right].
\end{equation}
By Proposition \ref{prop:Xstar_under_Fd} and the standard intertwining identity
\[
\mathcal F_{d,u_0}(L_{u_0}f)(\lambda)
=
\lambda\mathcal F_{d,u_0}f(\lambda),
\]
we have, as an identity of closed operators under the distorted Fourier transform,
\begin{equation}
\label{3.25}
\mathcal F_{d,u_0}
(X^*+2tL_{u_0})
\mathcal F_{d,u_0}^{-1}
=
i\partial_\lambda+B(\mathfrak u_0)+2t\lambda .
\end{equation}
Moreover, for the resolvent vector
\[
g:=(X^*+2tL_{u_0}-z\operatorname{Id})^{-1}u_0,
\]
Proposition \ref{prop:Xstar_under_Fd} gives
\[
I_+(g)=J_+(\mathcal F_{d,u_0}g).
\]
Applying \(\mathcal F_{d,u_0}\) to \eqref{3.24}, and using \eqref{3.25}, gives
\eqref{3.21}.

Let
\[
R_0(t,z):=(i\partial_\lambda+2t\lambda-z)^{-1},
\]
and
\[
R_B(t,z):=(i\partial_\lambda+B(\mathfrak u_0)+2t\lambda-z)^{-1}.
\]
The second resolvent identity gives
\begin{equation}
\label{3.26}
R_B(t,z)-R_0(t,z)
=
-R_B(t,z)B(\mathfrak u_0)R_0(t,z).
\end{equation}
By Lemma \ref{lemma:free_resolvent_boundary},
\begin{equation}
\label{3.27}
\frac1{2i\pi}J_+\bigl(R_0(t,z)\mathfrak u_0\bigr)
=
e^{it\partial_x^2}v_0^-(z).
\end{equation}
Subtracting \eqref{3.27} from \eqref{3.21}, and using \eqref{3.26}, gives the first
identity in \eqref{3.22}.

Finally, conjugating by \(e^{it\lambda^2}\), we obtain
\[
R_B(t,z)
=
e^{it\lambda^2}
A(\mathfrak u_0,t,z)
e^{-it\lambda^2},
\]
and, by \eqref{3.18},
\[
R_0(t,z)
=
e^{it\lambda^2}
(i\partial_\lambda-z)^{-1}
e^{-it\lambda^2}.
\]
Substituting these two identities into the first identity in \eqref{3.22} gives
\[
J_+\left[
R_B(t,z)B(\mathfrak u_0)R_0(t,z)\mathfrak u_0
\right] =
J_+\left[
e^{it\lambda^2}
A(\mathfrak u_0,t,z)
e^{-it\lambda^2}
B(\mathfrak u_0)
e^{it\lambda^2}
(i\partial_\lambda-z)^{-1}
e^{-it\lambda^2}\mathfrak u_0
\right].
\]
Since \(e^{it\lambda^2}=1\) at \(\lambda=0\), we have
\[
J_+\bigl(e^{it\lambda^2}h\bigr)=J_+(h)
\]
whenever the boundary value is defined. Therefore
\[
J_+\left[
R_B(t,z)B(\mathfrak u_0)R_0(t,z)\mathfrak u_0
\right]  =
J_+\left[
A(\mathfrak u_0,t,z)
e^{-it\lambda^2}
B(\mathfrak u_0)
e^{it\lambda^2}
(i\partial_\lambda-z)^{-1}
e^{-it\lambda^2}\mathfrak u_0
\right].
\]
This proves the second identity in \eqref{3.22}.
\end{proof}
The smooth formula is the identity we need, but the main theorem allows rough data in
\(L_{+}^{2}(\mathbb R)\). We now extend the formula by approximating the initial datum with
Schwartz positive-frequency data and by using the stability of the distorted Fourier
transform and of the boundary functional.

\begin{proposition}[Extension of the formula to \(L^2\) data]
\label{prop:rough_defect_formula}
Let
\[
u_0\in L^2_+(\mathbb R),
\]
and let
\[
u\in C(\mathbb R;L^2_+(\mathbb R))
\]
be the corresponding global solution of \eqref{3.1}. Set
\[
\mathfrak u_0:=\widetilde u_0:=\mathcal F_{d,u_0}u_0.
\]
Then
\[
\mathfrak u_0\in L^2(0,\infty)
\]
by Proposition \(\ref{prop 2.6}\). Define \(v_0^-\in L^2_+(\mathbb R)\) by
\[
\widehat{v_0^-}(\xi)=\mathfrak u_0(\xi)\mathbf 1_{\xi>0}.
\]
Then, for every \(t\in\mathbb R\) and \(z\in\mathbb C_+\),
\begin{equation}
\label{3.28}
\begin{aligned}
u(t,z)-e^{it\partial_x^2}v_0^-(z)
&=
-\frac1{2i\pi}
J_+\left[
A(\mathfrak u_0,t,z)
e^{-it\lambda^2}
B(\mathfrak u_0)
e^{it\lambda^2}
(i\partial_\lambda-z)^{-1}
e^{-it\lambda^2}\mathfrak u_0
\right].
\end{aligned}
\end{equation}
Here the whole boundary value
\[
J_+\bigl[A(\mathfrak u_0,t,z)h\bigr],
\qquad h\in L^1(0,\infty),
\]
is understood in the sense of Lemma \(\ref{coro a.7}\).
\end{proposition}

\begin{proof}
Choose
\[
u_0^n\in\mathcal S_+(\mathbb R)
\]
such that
\[
u_0^n\to u_0
\qquad\text{in }L^2_+(\mathbb R).
\]
Set
\[
\mathfrak u_n:=\mathcal F_{d,u_0^n}u_0^n.
\]
By Proposition \(\ref{proposition 3.6}\), since \(u_0^n\in\mathcal S_+(\mathbb R)\),
we have
\[
\mathfrak u_n\in\mathcal S_*.
\]
By Proposition \(\ref{prop:continuity_Fd}\), in its \(L^2\)-continuity form,
\begin{equation}
\label{3.29}
\mathfrak u_n\to\mathfrak u_0
\qquad\text{in }L^2(0,\infty).
\end{equation}
Let \(u_n(t)\) be the solution with initial data \(u_0^n\), and define
\(v_{0,n}^-\in L^2_+(\mathbb R)\) by
\[
\widehat{v_{0,n}^-}(\xi)=\mathfrak u_n(\xi)\mathbf 1_{\xi>0}.
\]
By Proposition \ref{prop:smooth_defect_formula},
\begin{equation}
\label{3.30}
\begin{aligned}
u_n(t,z)-e^{it\partial_x^2}v_{0,n}^-(z)
&=
-\frac1{2i\pi}
J_+\left[
A(\mathfrak u_n,t,z)
e^{-it\lambda^2}
B(\mathfrak u_n)
e^{it\lambda^2}
(i\partial_\lambda-z)^{-1}
e^{-it\lambda^2}\mathfrak u_n
\right].
\end{aligned}
\end{equation}
We pass to the limit in \eqref{3.30}.

First, by the \(L^2\)-continuity of the flow and by the point-evaluation estimate
in \(\mathbb C_+\),
\begin{equation}
\label{3.31}
u_n(t,z)\to u(t,z).
\end{equation}
Also, since \(\mathfrak u_n\to\mathfrak u_0\) in \(L^2(0,\infty)\), we have
\[
v_{0,n}^-\to v_0^-
\qquad\text{in }L^2_+(\mathbb R).
\]
Therefore, again by the point-evaluation estimate,
\begin{equation}
\label{3.32}
e^{it\partial_x^2}v_{0,n}^-(z)
\to
e^{it\partial_x^2}v_0^-(z).
\end{equation}
It remains to pass to the limit on the right-hand side of \eqref{3.30}. Let
\[
R_z:=(i\partial_\lambda-z)^{-1},
\]
and define
\[
r_n
:=
e^{it\lambda^2}R_z e^{-it\lambda^2}\mathfrak u_n,
\qquad
r_0
:=
e^{it\lambda^2}R_z e^{-it\lambda^2}\mathfrak u_0.
\]
By Proposition \ref{prop 2},
\begin{equation}
\label{3.33}
\|r_n-r_0\|_{L^\infty(0,\infty)}
\lesssim_z
\|\mathfrak u_n-\mathfrak u_0\|_{L^2(0,\infty)}
\to0.
\end{equation}
For \(v\in L^2(0,\infty)\) and \(r\in L^\infty(0,\infty)\), the expression
\[
B(v)r
=
-\frac1{4\pi}
\left(
ivH(\overline vr)-|v|^2r
\right)
\]
belongs to \(L^1(0,\infty)\), and
\begin{equation}
\label{3.34}
\|B(v)r\|_{L^1}
\lesssim
\|v\|_{L^2}^2\|r\|_{L^\infty}.
\end{equation}
Moreover, for \(v_1,v_2\in L^2(0,\infty)\) and
\(r_1,r_2\in L^\infty(0,\infty)\),
\begin{equation}
\label{3.35}
\begin{aligned}
\|B(v_1)r_1-B(v_2)r_2\|_{L^1}
&\lesssim
\|v_1-v_2\|_{L^2}
\bigl(\|v_1\|_{L^2}+\|v_2\|_{L^2}\bigr)
\|r_1\|_{L^\infty}  \\
&\quad
+
\|v_2\|_{L^2}^2\|r_1-r_2\|_{L^\infty}.
\end{aligned}
\end{equation}
Indeed, the Hilbert-transform part is estimated by
\[
\begin{aligned}
&\|v_1H(\overline{v_1}r_1)-v_2H(\overline{v_2}r_2)\|_{L^1} \\
&\quad\le
\|(v_1-v_2)H(\overline{v_1}r_1)\|_{L^1}
+
\|v_2H((\overline{v_1}-\overline{v_2})r_1)\|_{L^1} \\
&\qquad
+
\|v_2H(\overline{v_2}(r_1-r_2))\|_{L^1}  \\
&\quad\lesssim
\|v_1-v_2\|_{L^2}
\bigl(\|v_1\|_{L^2}+\|v_2\|_{L^2}\bigr)
\|r_1\|_{L^\infty}
+
\|v_2\|_{L^2}^2
\|r_1-r_2\|_{L^\infty},
\end{aligned}
\]
and the multiplication term
\[
|v_1|^2r_1-|v_2|^2r_2
\]
is estimated in the same way.

Using \eqref{3.29}, \eqref{3.33}, and \eqref{3.35}, we get
\begin{equation}
\label{3.36}
e^{-it\lambda^2}B(\mathfrak u_n)r_n
\to
e^{-it\lambda^2}B(\mathfrak u_0)r_0
\qquad\text{in }L^1(0,\infty).
\end{equation}
By Lemma \ref{coro a.7}, applied with
\[
h_n:=e^{-it\lambda^2}B(\mathfrak u_n)r_n,
\qquad
h_0:=e^{-it\lambda^2}B(\mathfrak u_0)r_0,
\]
and using \eqref{3.29} and \eqref{3.36}, we obtain
\begin{equation}
\label{3.37}
J_+\left[
A(\mathfrak u_n,t,z)
e^{-it\lambda^2}B(\mathfrak u_n)r_n
\right] \longrightarrow
J_+\left[
A(\mathfrak u_0,t,z)
e^{-it\lambda^2}B(\mathfrak u_0)r_0
\right] \text{ as } n \to \infty.
\end{equation}
Taking the limit in \eqref{3.30}, and using \eqref{3.31}, \eqref{3.32}, and
\eqref{3.37}, gives \eqref{3.28}. This completes the proof.
\end{proof}
This completes the distorted Fourier representation for the class of data
used in the main theorem. In the next section we combine this formula with the
oscillatory estimates from the appendix to prove the weak radiation limit required for scattering.

\section{Proof of the main theorem}
\label{section 3}
In this section, we aim to prove Theorem
\ref{theorem 1.5}. The proof has two main steps. First, we show
that a weak radiation limit for the defect term implies full \(L^2\)-scattering.
Second, we verify this weak radiation limit by using the distorted Fourier representation
 obtained in the previous section, together with the oscillatory estimates
proved in the appendix.
\subsection{A sufficient condition for scattering}
The next lemma reduces the proof of the theorem to a weak convergence statement along
the rays \(z+2t\eta\). This criterion is useful because the distorted formula gives precise control in this regime.
\begin{lemma}[A sufficient condition for \(L^2\)-scattering]
\label{lemma 2.29}
Let
\[
u_0\in L_+^{2}(\mathbb R)
\]
and let
\[
u\in C(\mathbb R;L_+^2(\mathbb R))
\]
be the corresponding global solution of \eqref{3.1}. 

Set
\[
\widetilde u_0:=\mathcal F_{d,u_0}u_0,
\]
and define \(v_0^-\in L_+^2(\mathbb R)\) by
\[
\widehat{v_0^-}(\xi)
=
\widetilde u_0(\xi)\mathbf 1_{\xi>0}.
\]
Let
\[
w(\widetilde{u}_0,t,z)
:=
u(t,z)-e^{it\partial_x^2}v_0^-(z),
\qquad z\in\mathbb C_+.
\]
Assume that, for every
\(z\in\mathbb C_+\),
\begin{equation}
\label{eq:l2-weak-radiation-assumption}
|t|^{1/2}e^{-it\eta^2}
w(\widetilde{u}_0,t,z+2t\eta)
\rightharpoonup0
\quad\text{weakly in }L^2(0,\infty)_\eta
\end{equation}
as \(t\to-\infty\). Then
\[
u(t)-e^{it\partial_x^2}v_0^-
\longrightarrow0
\qquad\text{in }L^2(\mathbb R)
\]
as \(t\to-\infty\). Consequently, by Remark \ref{remark 1.101}, Theorem \ref{theorem 1.5} follows.
\end{lemma}
A similar sufficient condition for scattering in $H^1$ was first introduced for the Benjamin–Ono equation in \cite[Lemma 3.2]{0}. Lemma \ref{lemma 2.29} subsequently extends this sufficient condition to $L^2$.
\begin{proof}
For \(z\in\mathbb C_+\), define
\[
F_{t,z}(\eta)
:=
(2|t|)^{1/2}e^{-it\eta^2}w(\widetilde{u}_0, t,z+2t\eta),
\qquad \eta>0,\quad t<0.
\]
From the assumption \eqref{eq:l2-weak-radiation-assumption}, we know that
\begin{equation}
\label{eq:l2-ray-limit-positive-height}
F_{t,z} \rightharpoonup0
\quad\text{weakly in }L^2(0,\infty)_\eta
\end{equation}
as \(t\to-\infty\).
We now prove weak convergence in \(L^2(\mathbb R)\) of the profiles
\[
h_t:=e^{-it\partial_x^2} w = e^{-it\partial_x^2}u(t) - v_0^{-}.
\]
It is enough to test against functions \(f\in L_+^2(\mathbb R)\) such that
\[
\widehat f\in C_c^\infty(0,\infty),
\]
because this class is dense in \(L_+^2(\mathbb R)\), and the family
\(\{h_t:t<0\}\) is bounded in \(L^2\).

Fix such an \(f\). Choose \(y>0\), and define \(f^{(y)}\in L_+^2(\mathbb R)\)
by
\[
\widehat{f^{(y)}}(\xi)
:=
e^{y\xi}\widehat f(\xi).
\]
This is again a smooth positive-frequency function, because \(\widehat f\) is compactly
supported in \((0,\infty)\).

We claim that
\begin{equation}
\label{eq:horizontal-line-identity}
\left\langle
w(t),e^{it\partial_x^2}f
\right\rangle_{L^2(\mathbb R)}
=
\left\langle
w(t,\cdot+iy),e^{it\partial_x^2}f^{(y)}
\right\rangle_{L^2(\mathbb R)} .
\end{equation}
Indeed, by Plancherel's identity,
\[
\begin{aligned}
\left\langle
w(t,\cdot+iy),e^{it\partial_x^2}f^{(y)}
\right\rangle
&=
\frac1{2\pi}
\int_0^\infty
e^{-y\xi}\widehat w(t,\xi)
\overline{e^{-it\xi^2}\widehat{f^{(y)}}(\xi)}
\,d\xi \\
&=
\frac1{2\pi}
\int_0^\infty
\widehat w(t,\xi)
e^{it\xi^2}
\overline{\widehat f(\xi)}
\,d\xi  \\
&=
\left\langle
w(t),e^{it\partial_x^2}f
\right\rangle .
\end{aligned}
\]
Thus,
\[
\left\langle h_t,f\right\rangle
=
\left\langle
w(t,\cdot+iy),e^{it\partial_x^2}f^{(y)}
\right\rangle .
\]
It is well known that, for every $g \in L^2(\mathbb{R})$,
\[
\mathrm{e}^{\mathrm{i} t \partial_x^2} g=\mathrm{e}^{\mathrm{i} \frac{x^2}{4 t}} \frac{\mathrm{e}^{\mathrm{i} \pi / 4}}{\sqrt{4 \pi |t|}} \widehat{g}\left(\frac{x}{2 t}\right)+o(1) \quad \text{ in } L^2
\]
as $t \to -\infty$, this gives
\[
\left\langle
w(t,\cdot+iy),e^{it\partial_x^2}f^{(y)}
\right\rangle
=
\frac{e^{-i\pi/4}}{\sqrt{2\pi}}
\int_0^\infty
F_{t,iy}(\eta)
\overline{\widehat{f^{(y)}}(\eta)}
\,d\eta
+
o(1)
\]
as \(t\to-\infty\). Using
\eqref{eq:l2-ray-limit-positive-height} with \(z=iy\), we get
\[
\lim_{t\to-\infty}
\left\langle h_t,f\right\rangle =0.
\]
Hence
\[
e^{-it\partial_x^2}u(t)
\rightharpoonup
v_0^-
\qquad\text{weakly in }L^2(\mathbb R)
\]
as \(t\to-\infty\).

It remains only to upgrade weak convergence to strong convergence. By conservation of
mass,
\[
\left\|e^{-it\partial_x^2}u(t)\right\|_{L^2}
=
\|u_0\|_{L^2}.
\]
On the other hand, by the Plancherel's identity for the distorted Fourier transform,
\[
\|v_0^-\|_{L^2}^2
=
\frac1{2\pi}
\|\widetilde u_0\|_{L^2(0,\infty)}^2
=
\|u_0\|_{L^2}^2.
\]
Therefore weak convergence together with convergence of the norms implies
\[
e^{-it\partial_x^2}u(t)
\longrightarrow
v_0^-
\qquad\text{strongly in }L^2(\mathbb R),
\]
equivalently,
\[
u(t)-e^{it\partial_x^2}v_0^-
\longrightarrow0
\qquad\text{strongly in }L^2(\mathbb R)
\]
as \(t\to-\infty\). This proves the lemma.
\end{proof}
The remaining task is therefore to verify the weak convergence assumption
\eqref{eq:l2-weak-radiation-assumption}. We do this by first proving a compact-uniform decay estimate for smooth
spectral cutoff data, and then approximating the rough spectral data by smooth ones.

\subsection{Completion of the proof}
In this section, we aim to finish the proof of Theorem \ref{theorem 1.5}. The following lemma is the compact-uniform smooth decay estimate. The compactness assumption on
\(\Lambda\) keeps the parameter away from the endpoint \(\eta=0\), which is
exactly the regime required by the oscillatory integral estimate in Proposition
\ref{prop a.3}.
\begin{lemma}[Compact-uniform smooth decay]
\label{lemma:smooth_compact_uniform_decay}
Let
\[
\mathfrak u_0\in L^2(0,\infty).
\]
Let
\[
\mathfrak u,\mathfrak v\in C_c^\infty(0,\infty),
\qquad
z=x_0+iy_0\in\mathbb C_+,
\qquad
\Lambda\Subset(0,\infty).
\]
For \(t<0\) and \(\zeta\in\mathbb C_+\), define
\begin{equation}
\label{eq:smooth-compact-W-def}
W_{\mathfrak u,\mathfrak v}(t,\zeta)
:=
-\frac1{2i\pi}
J_+\left[
A(\mathfrak u_0,t,\zeta)e^{-it\lambda^2}
B(\mathfrak u)e^{it\lambda^2}
(i\partial_\lambda-\zeta)^{-1}
e^{-it\lambda^2}\mathfrak v
\right].
\end{equation}
Here the whole boundary value
\[
J_+\bigl[A(\mathfrak u_0,t,\zeta)h\bigr],
\qquad h\in L^1(0,\infty),
\]
is understood in the sense of Lemma \(\ref{coro a.7}\). Then
\begin{equation}
\label{eq:smooth-compact-uniform-decay}
\sup_{\eta\in\Lambda}
|t|^{1/2}
\left|
W_{\mathfrak u,\mathfrak v}(t,z+2t\eta)
\right|
\longrightarrow0
\qquad\text{as }t\to-\infty.
\end{equation}
\end{lemma}
\begin{proof}
Set
\[
\zeta=z+2t\eta,
\qquad
R_\zeta:=(i\partial_\lambda-\zeta)^{-1}.
\]
Since
\[
\Im(z+2t\eta)=\Im z=y_0>0,
\]
Lemma \ref{coro a.7}, with \(\gamma=y_0\), gives
\begin{equation}
\label{eq:smooth-compact-boundary-estimate}
\left|
J_+\bigl[A(\mathfrak u_0,t,z+2t\eta)h\bigr]
\right|
\le
C_{\mathfrak u_0,y_0}\|h\|_{L^1(0,\infty)}
\end{equation}
for all \(t<0\), all \(\eta\in\Lambda\), and all \(h\in L^1(0,\infty)\).

It remains to estimate the \(L^1\)-input
\[
h_{\mathfrak u,\mathfrak v}(t,\eta,\lambda)
:=
e^{-it\lambda^2}
B(\mathfrak u)e^{it\lambda^2}
R_\zeta e^{-it\lambda^2}\mathfrak v .
\]
We write
\[
r_{t,\eta}(\lambda)
:=
e^{it\lambda^2}
R_\zeta e^{-it\lambda^2}\mathfrak v(\lambda).
\]
Using the explicit formula for the free resolvent, we get
\begin{equation}
\label{eq:smooth-compact-r-formula}
r_{t,\eta}(\lambda)
=
i e^{it(\lambda-\eta)^2}e^{-iz\lambda}
\int_\lambda^\infty
e^{-it(s-\eta)^2}e^{izs}\mathfrak v(s)\,ds .
\end{equation}
Since
\[
B(\mathfrak u)r
=
-\frac1{4\pi}
\left(
i\mathfrak u\,H(\overline{\mathfrak u}r)
-
|\mathfrak u|^2r
\right),
\]
and
\[
Hf(\lambda)
=
\frac1\pi\operatorname{p.v.}
\int_0^\infty\frac{f(\mu)}{\lambda-\mu}\,d\mu ,
\]
a direct computation gives
\begin{equation}
\label{eq:smooth-compact-h-identity}
h_{\mathfrak u,\mathfrak v}(t,\eta,\lambda)
=
\frac{i}{4\pi}
e^{-it\lambda^2}\mathfrak u(\lambda)
\left[
J(t,\lambda,\eta)-\frac{i}{\pi}I(t,\lambda,\eta)
\right],
\end{equation}
where \(I(t,\lambda,\eta)\) and \(J(t,\lambda,\eta)\) are exactly the oscillatory
integrals in Proposition \ref{prop a.3}, with the same functions
\(\mathfrak u,\mathfrak v\) and the same \(z\).

Let
\[
K:=\operatorname{supp}\mathfrak u .
\]
Since \(\mathfrak u\in C_c^\infty(0,\infty)\), we have
\[
K\Subset(0,\infty).
\]
Moreover, by \eqref{eq:smooth-compact-h-identity},
\[
h_{\mathfrak u,\mathfrak v}(t,\eta,\lambda)=0
\qquad\text{for }\lambda\notin K.
\]
Hence
\begin{equation}
\label{eq:smooth-compact-input-L1-bound}
\begin{aligned}
\|h_{\mathfrak u,\mathfrak v}(t,\eta,\cdot)\|_{L^1(0,\infty)}
&\le
\frac1{4\pi}
\|\mathfrak u\|_{L^1}
\sup_{\lambda\in K}
\left|
J(t,\lambda,\eta)-\frac{i}{\pi}I(t,\lambda,\eta)
\right|.
\end{aligned}
\end{equation}
Since
\[
\Lambda,K\Subset(0,\infty),
\]
Proposition \ref{prop a.3} applies and gives
\begin{equation}
\label{eq:smooth-compact-input-decay}
\sup_{\eta\in\Lambda}
|t|^{1/2}
\|h_{\mathfrak u,\mathfrak v}(t,\eta,\cdot)\|_{L^1(0,\infty)}
\longrightarrow0
\qquad\text{as }t\to-\infty.
\end{equation}
Finally, by \eqref{eq:smooth-compact-W-def},
\[
W_{\mathfrak u,\mathfrak v}(t,z+2t\eta)
=
-\frac1{2i\pi}
J_+\bigl[
A(\mathfrak u_0,t,z+2t\eta)
h_{\mathfrak u,\mathfrak v}(t,\eta,\cdot)
\bigr].
\]
Using \eqref{eq:smooth-compact-boundary-estimate} and
\eqref{eq:smooth-compact-input-decay}, we obtain
\[
\sup_{\eta\in\Lambda}
|t|^{1/2}
\left|
W_{\mathfrak u,\mathfrak v}(t,z+2t\eta)
\right|
\le
C_{\mathfrak u_0,y_0}
\sup_{\eta\in\Lambda}
|t|^{1/2}
\|h_{\mathfrak u,\mathfrak v}(t,\eta,\cdot)\|_{L^1}
\longrightarrow0.
\]
This proves \eqref{eq:smooth-compact-uniform-decay}.
\end{proof}

The smooth decay estimate will now be combined with two approximation arguments. The
first approximation replaces the \(B(\mathfrak u_0)\)-factor by \(B(\mathfrak u_\varepsilon)\)
with smooth compactly supported spectral data. The second approximation replaces the
input \(\mathfrak u_0\) by a smooth compactly supported function. The corresponding error
terms are controlled by Corollary \ref{corollary a.4} and Lemma \ref{lemma a.8},
respectively.

Now we go to prove Theorem \ref{theorem 1.5}.
\begin{proof}[Proof of Theorem \ref{theorem 1.5}]
Let
\[
u_0\in L_+^2(\mathbb R),
\]
and set
\[
\mathfrak u_0:=\widetilde u_0:=\mathcal F_{d,u_0}u_0.
\]
By Proposition \(\ref{prop 2.6}\),
\[
\mathfrak u_0\in L^2(0,\infty).
\]
Let \(v_0^-\in L^2_+(\mathbb R)\) be defined by
\[
\widehat{v_0^-}(\xi)=\mathfrak u_0(\xi)\mathbf 1_{\xi>0}.
\]
By Lemma \ref{lemma 2.29}, it suffices to prove that, for every
\(z\in\mathbb C_+\),
\begin{equation}
\label{eq:thm15-weak-goal}
|t|^{1/2}e^{-it\eta^2}
w(\mathfrak u_0,t,z+2t\eta)
\rightharpoonup0
\quad\text{weakly in }L^2(0,\infty)_\eta
\end{equation}
as \(t\to-\infty\), where
\[
w(\mathfrak u_0,t,\zeta)
:=
u(t,\zeta)-e^{it\partial_x^2}v_0^-(\zeta).
\]
Fix
\[
z=x_0+iy_0\in\mathbb C_+.
\]
We first note that the family
\[
W_t(\eta)
:=
|t|^{1/2}e^{-it\eta^2}
w(\mathfrak u_0,t,z+2t\eta)
\]
is bounded in \(L^2(0,\infty)_\eta\), uniformly for \(t<0\). Indeed, since both
\(u(t)\) and \(e^{it\partial_x^2}v_0^-\) belong to \(L^2_+(\mathbb R)\), we infer
\[
\begin{aligned}
|t|\int_0^\infty
|u(t,z+2t\eta)|^2\,d\eta
&=
\frac12
\int_{-\infty}^{x_0}
|u(t,x+iy_0)|^2\,dx  \\
&\le
\frac12
\|u(t)\|_{L^2(\mathbb R)}^2,
\end{aligned}
\]
and similarly
\[
|t|\int_0^\infty
|e^{it\partial_x^2}v_0^-(z+2t\eta)|^2\,d\eta
\le
\frac12
\|v_0^-\|_{L^2(\mathbb R)}^2.
\]
Thus
\begin{equation}
\label{eq:thm15-uniform-L2-bound}
\sup_{t<0}\|W_t\|_{L^2(0,\infty)}<\infty.
\end{equation}
Consequently, by density, it is enough to prove
\begin{equation}
\label{eq:thm15-test-goal}
\int_0^\infty
|t|^{1/2}e^{-it\eta^2}
w(\mathfrak u_0,t,z+2t\eta)
\overline{\phi(\eta)}\,d\eta
\longrightarrow0 \qquad\text{as }t\to-\infty
\end{equation}
for every
\[
\phi\in C_c^\infty(0,\infty).
\]
Fix such a test function \(\phi\), and set
\[
\Lambda:=\operatorname{supp}\phi\Subset(0,\infty).
\]
Let \(\varepsilon>0\). Choose
\[
\mathfrak v_\varepsilon\in C_c^\infty(0,\infty)
\]
such that
\begin{equation}
\label{eq:thm15-v-eps}
\|\mathfrak v_\varepsilon-\mathfrak u_0\|_{L^2(0,\infty)}<\varepsilon.
\end{equation}
For
\[
\mathfrak u,\mathfrak v\in C_c^\infty(0,\infty),
\]
define
\begin{equation}
\label{eq:thm15-smooth-w-def}
w_{\mathfrak u,\mathfrak v}(t,\zeta)
:=
-\frac1{2i\pi}
J_+\left[
A(\mathfrak u_0,t,\zeta)e^{-it\lambda^2}
B(\mathfrak u)e^{it\lambda^2}
(i\partial_\lambda-\zeta)^{-1}
e^{-it\lambda^2}\mathfrak v
\right].
\end{equation}
By Lemma \ref{lemma:smooth_compact_uniform_decay}, for every fixed
\(\mathfrak u\in C_c^\infty(0,\infty)\),
\begin{equation}
\label{eq:thm15-smooth-decay}
\sup_{\eta\in\Lambda}
|t|^{1/2}
\left|
w_{\mathfrak u,\mathfrak v_\varepsilon}(t,z+2t\eta)
\right|
\longrightarrow0
\qquad\text{as }t\to-\infty.
\end{equation}
In particular,
\begin{equation}
\label{eq:thm15-smooth-test-decay}
\int_0^\infty
|t|^{1/2}e^{-it\eta^2}
w_{\mathfrak u,\mathfrak v_\varepsilon}(t,z+2t\eta)
\overline{\phi(\eta)}\,d\eta
\longrightarrow0 \qquad\text{as }t\to-\infty.
\end{equation}
We now compare
\[
w_{\mathfrak u,\mathfrak v_\varepsilon}(t,z+2t\eta)
\]
with
\[
w(\mathfrak u_0,t,z+2t\eta).
\]
By Proposition \ref{prop:rough_defect_formula},
\begin{equation}
\label{eq:thm15-rough-defect-formula}
w(\mathfrak u_0,t,\zeta)
=
-\frac1{2i\pi}
J_+\left[
A(\mathfrak u_0,t,\zeta)e^{-it\lambda^2}
B(\mathfrak u_0)e^{it\lambda^2}
(i\partial_\lambda-\zeta)^{-1}
e^{-it\lambda^2}\mathfrak u_0
\right].
\end{equation}
Therefore
\begin{equation}
\label{eq:thm15-difference-split}
w_{\mathfrak u,\mathfrak v_\varepsilon}(t,z+2t\eta)
-
w(\mathfrak u_0,t,z+2t\eta)
=
I_1(t,\eta)+I_2(t,\eta),
\end{equation}
where
\begin{equation}
\label{eq:thm15-I1-def}
\begin{aligned}
I_1(t,\eta)
&:=
-\frac1{2i\pi}
J_+\left[
A(\mathfrak u_0,t,z+2t\eta)e^{-it\lambda^2}
\bigl(B(\mathfrak u)-B(\mathfrak u_0)\bigr)e^{it\lambda^2}
\right. \\
&\hspace{4.3cm}\left.
\times
(i\partial_\lambda-z-2t\eta)^{-1}
e^{-it\lambda^2}\mathfrak v_\varepsilon
\right],
\end{aligned}
\end{equation}
and
\begin{equation}
\label{eq:thm15-I2-def}
\begin{aligned}
I_2(t,\eta)
&:=
-\frac1{2i\pi}
J_+\left[
A(\mathfrak u_0,t,z+2t\eta)e^{-it\lambda^2}
B(\mathfrak u_0)e^{it\lambda^2}
\right.\\
&\hspace{4.3cm}\left.
\times
(i\partial_\lambda-z-2t\eta)^{-1}
e^{-it\lambda^2}
(\mathfrak v_\varepsilon-\mathfrak u_0)
\right].
\end{aligned}
\end{equation}

We first estimate \(I_1\). Put
\[
R_{t,\eta,z}\mathfrak v_\varepsilon
:=
e^{it\lambda^2}
(i\partial_\lambda-z-2t\eta)^{-1}
e^{-it\lambda^2}\mathfrak v_\varepsilon.
\]
By Corollary \ref{corollary a.4}, for all \(|t|\ge1\),
\begin{equation}
\label{eq:thm15-B-difference-estimate}
\sup_{\eta\in\Lambda}
|t|^{1/2}
\left\|
\bigl(B(\mathfrak u)-B(\mathfrak u_0)\bigr)
R_{t,\eta,z}\mathfrak v_\varepsilon
\right\|_{L^1(0,\infty)}
\le
C_{\Lambda,z,\mathfrak v_\varepsilon}
\|\mathfrak u-\mathfrak u_0\|_{L^2(0,\infty)}
\bigl(
\|\mathfrak u\|_{L^2(0,\infty)}
+
\|\mathfrak u_0\|_{L^2(0,\infty)}
\bigr).
\end{equation}
By Lemma \ref{coro a.7}, applied with \(\gamma=y_0=\Im z\),
\begin{equation}
\label{eq:thm15-boundary-L1-estimate}
\left|
J_+\bigl[A(\mathfrak u_0,t,z+2t\eta)h\bigr]
\right|
\le
C_{\mathfrak u_0,y_0}\|h\|_{L^1(0,\infty)}
\end{equation}
uniformly for \(t<0\), \(\eta\in\Lambda\), and \(h\in L^1(0,\infty)\). Combining
\eqref{eq:thm15-B-difference-estimate} and \eqref{eq:thm15-boundary-L1-estimate}, we get
\begin{equation}
\label{eq:thm15-I1-bound}
\begin{aligned}
\sup_{\eta\in\Lambda}
|t|^{1/2}|I_1(t,\eta)|
&\le
C_{\Lambda,z,\mathfrak v_\varepsilon,\mathfrak u_0}
\|\mathfrak u-\mathfrak u_0\|_{L^2}
\bigl(
\|\mathfrak u\|_{L^2}+\|\mathfrak u_0\|_{L^2}
\bigr).
\end{aligned}
\end{equation}
Choose
\[
\mathfrak u=\mathfrak u_\varepsilon\in C_c^\infty(0,\infty)
\]
so close to \(\mathfrak u_0\) in \(L^2(0,\infty)\) that
\begin{equation}
\label{eq:thm15-I1-eps-bound}
\sup_{\eta\in\Lambda}|t|^{1/2}|I_1(t,\eta)|
\le\varepsilon
\end{equation}
for all \(|t|\ge1\). Therefore
\begin{equation}
\label{eq:thm15-I1-test-bound}
\left|
\int_0^\infty
|t|^{1/2}e^{-it\eta^2}
I_1(t,\eta)\overline{\phi(\eta)}\,d\eta
\right|
\le
\varepsilon\|\phi\|_{L^1(0,\infty)}.
\end{equation}

We now estimate \(I_2\). Set
\[
f_\varepsilon:=\mathfrak v_\varepsilon-\mathfrak u_0.
\]
For \(f\in L^2(0,\infty)\), define
\begin{equation}
\label{eq:thm15-Gf-def}
G_f(t,\zeta)
:=
-\frac1{2i\pi}
J_+\left[
A(\mathfrak u_0,t,\zeta)e^{-it\lambda^2}
B(\mathfrak u_0)e^{it\lambda^2}
(i\partial_\lambda-\zeta)^{-1}
e^{-it\lambda^2}f
\right].
\end{equation}
Then
\[
I_2(t,\eta)=G_{f_\varepsilon}(t,z+2t\eta).
\]
By Lemma \ref{lemma a.8}, there exists
\[
g_{f_\varepsilon}(t)\in L^2_+(\mathbb R)
\]
such that \(G_{f_\varepsilon}(t,\cdot)\) is the Hardy extension of
\(g_{f_\varepsilon}(t)\), and
\begin{equation}
\label{eq:thm15-G-boundary-L2-bound}
\|g_{f_\varepsilon}(t)\|_{L^2(\mathbb R)}
\le
\|f_\varepsilon\|_{L^2(0,\infty)}.
\end{equation}
Thus we infer
\begin{equation}
\label{eq:thm15-G-horizontal-bound}
\|G_{f_\varepsilon}(t,\cdot+iy_0)\|_{L^2_x(\mathbb R)}
\le
\|g_{f_\varepsilon}(t)\|_{L^2(\mathbb R)}
\le
\|f_\varepsilon\|_{L^2(0,\infty)}.
\end{equation}
Since \(t<0\), the change of variables
\[
x=x_0+2t\eta
\]
gives
\begin{equation}
\label{eq:thm15-I2-L2-bound}
\begin{aligned}
\bigl\||t|^{1/2}I_2(t,\eta)\bigr\|_{L^2_\eta(0,\infty)}^2
&=
|t|\int_0^\infty
|G_{f_\varepsilon}(t,z+2t\eta)|^2\,d\eta \\
&=
\frac12
\int_{-\infty}^{x_0}
|G_{f_\varepsilon}(t,x+iy_0)|^2\,dx \\
&\le
\frac12
\|G_{f_\varepsilon}(t,\cdot+iy_0)\|_{L^2_x(\mathbb R)}^2 \\
&\le
\frac12
\|f_\varepsilon\|_{L^2(0,\infty)}^2.
\end{aligned}
\end{equation}
By \eqref{eq:thm15-v-eps},
\begin{equation}
\label{eq:thm15-I2-eps-bound}
\bigl\||t|^{1/2}I_2(t,\eta)\bigr\|_{L^2_\eta(0,\infty)}
\le
\frac{\varepsilon}{\sqrt2}.
\end{equation}
Consequently,
\begin{equation}
\label{eq:thm15-I2-test-bound}
\left|
\int_0^\infty
|t|^{1/2}e^{-it\eta^2}
I_2(t,\eta)\overline{\phi(\eta)}\,d\eta
\right|
\le
\frac{\varepsilon}{\sqrt2}\|\phi\|_{L^2(0,\infty)}.
\end{equation}

Using
\[
w(\mathfrak u_0,t,z+2t\eta)
=
w_{\mathfrak u_\varepsilon,\mathfrak v_\varepsilon}(t,z+2t\eta)
-
I_1(t,\eta)
-
I_2(t,\eta),
\]
and combining \eqref{eq:thm15-smooth-test-decay},
\eqref{eq:thm15-I1-test-bound}, and \eqref{eq:thm15-I2-test-bound}, we obtain
\begin{equation}
\label{eq:thm15-limsup-estimate}
\begin{aligned}
&\limsup_{t\to-\infty}
\left|
\int_0^\infty
|t|^{1/2}e^{-it\eta^2}
w(\mathfrak u_0,t,z+2t\eta)
\overline{\phi(\eta)}\,d\eta
\right|
\\
&\qquad\le
\varepsilon
\left(
\|\phi\|_{L^1(0,\infty)}
+
\frac1{\sqrt2}\|\phi\|_{L^2(0,\infty)}
\right).
\end{aligned}
\end{equation}
Since \(\varepsilon>0\) is arbitrary, we obtain \eqref{eq:thm15-test-goal}. Together with
the uniform \(L^2\)-bound \eqref{eq:thm15-uniform-L2-bound}, this proves the weak
convergence \eqref{eq:thm15-weak-goal}.

Therefore the hypothesis of Lemma \ref{lemma 2.29} is satisfied. Hence
\[
u(t)-e^{it\partial_x^2}v_0^-
\to0
\qquad\text{in }L^2(\mathbb R)
\]
as \(t\to-\infty\). The positive-time scattering statement follows from Remark
\ref{remark 1.101}. This completes the proof of Theorem \ref{theorem 1.5}.
\end{proof}

\begin{appendix}
\section{Appendix}
\label{appendix}
This appendix collects the technical estimates used in the proof of the main theorem. In the first part, we  prove the oscillatory
integral bounds needed for the compact-uniform decay of the smooth defect. The second part provides resolvent estimates, construct the rough boundary functional, and prove the bound used for the defect term.

\subsection{Oscillatory integral estimates}

The main estimate in this subsection compares two oscillatory integrals arising from the
distorted formula \eqref{3.22}. The compactness assumptions on
\(\Lambda\) and \(K\) keep the variables away from the endpoint \(0\), where the phase
analysis would require a different treatment.
\begin{proposition}
\label{prop a.3}
Let \(\Lambda,K\Subset(0,\infty)\), let \(z\in\mathbb C\) satisfy
\(\Im z>0\), and let
\[
\mathfrak u,\mathfrak v\in\mathcal{S}_{*}:=\{\varphi=g|_{(0,\infty)}:\ g\in \mathcal{S}(\mathbb R),\
\operatorname{supp}g\subset[0,\infty)\}.
\]
For \(t<0\), \(\eta\in\Lambda\), and \(\lambda\in K\), define
\[
I(t,\lambda,\eta)
:=
\operatorname{p.v.}\int_0^\infty
\frac{\overline{\mathfrak u(\mu)}e^{it(\mu-\eta)^2}e^{-iz\mu}
\displaystyle\int_\mu^\infty e^{-it(s-\eta)^2}e^{izs}\mathfrak v(s)\,ds}
{\lambda-\mu}\,d\mu,
\]
and
\[
J(t,\lambda,\eta)
:=
\overline{\mathfrak u(\lambda)}e^{it(\lambda-\eta)^2}e^{-iz\lambda}
\int_\lambda^\infty e^{-it(s-\eta)^2}e^{izs}\mathfrak v(s)\,ds.
\]
Then
\[
\sup_{\eta\in\Lambda,\ \lambda\in K}
|t|^{1/2}\left|J(t,\lambda,\eta)-\frac{i}{\pi}I(t,\lambda,\eta)\right|
\longrightarrow 0
\qquad\text{as }t\to-\infty.
\]
More precisely, for \(t\le -1\),
\[
\sup_{\eta\in\Lambda,\ \lambda\in K}
\left|J(t,\lambda,\eta)-\frac{i}{\pi}I(t,\lambda,\eta)\right|
\lesssim_{\Lambda,K,\mathfrak u,\mathfrak v,z} |t|^{-1}.
\]
\end{proposition}

\begin{proof}
We prove the compact-uniform version of the estimate. Thus we use that
\[
\Lambda,K\Subset(0,\infty).
\]
Set
\[
\tau:=-t>0.
\]
Then \(\tau\to+\infty\) as \(t\to-\infty\).

\medskip
\noindent\textbf{Step 1: rewriting and separation of the singularity.}
Changing variables \(s=\lambda+v\) in the definition of \(J\), we get
\begin{equation}
\label{eq:propa3-J-rewrite}
J(t,\lambda,\eta)
=
\overline{\mathfrak u(\lambda)}
\int_0^\infty
e^{i\tau(v^2-2\eta v+2\lambda v)}
e^{izv}
\mathfrak v(\lambda+v)\,dv .
\end{equation}
Similarly, changing variables \(s=\mu+v\) in the inner integral defining \(I\), we obtain
\begin{equation}
\label{eq:propa3-I-rewrite}
I(t,\lambda,\eta)
=
\operatorname{p.v.}\int_0^\infty
\frac{1}{\lambda-\mu}
\left(
\int_0^\infty
e^{i\tau(v^2-2\eta v+2\mu v)}
e^{izv}
\overline{\mathfrak u(\mu)}\mathfrak v(\mu+v)\,dv
\right)d\mu .
\end{equation}
For \(v\ge0\), set
\[
f_v(\mu):=\overline{\mathfrak u(\mu)}\mathfrak v(\mu+v).
\]
Choose \(\chi\in C_c^\infty((0,\infty))\) such that \(\chi=1\) in a neighborhood of
\(\Lambda\cup K\). For \(\lambda\in K\), define
\[
G_v(\mu;\lambda)
:=
\frac{f_v(\mu)-\chi(\mu)f_v(\lambda)}{\mu-\lambda},
\]
with the smooth extension at \(\mu=\lambda\), and define
\[
H_v(\mu;\lambda)
:=
\frac{(1-\chi(\mu))f_v(\lambda)}{\mu-\lambda}.
\]
Then
\[
\frac{f_v(\mu)-f_v(\lambda)}{\mu-\lambda}
=
G_v(\mu;\lambda)-H_v(\mu;\lambda).
\]
Using the identity
\[
\operatorname{p.v.}\int_0^\infty
\frac{e^{i\kappa\mu}}{\lambda-\mu}\,d\mu
=
-i\pi e^{i\kappa\lambda}
+
\int_{-\infty}^0
\frac{e^{i\kappa\mu}}{\mu-\lambda}\,d\mu,
\qquad \kappa>0,\ \lambda>0,
\]
and first working with truncated principal values, we obtain
\begin{equation}
\label{eq:propa3-error-decomposition}
I(t,\lambda,\eta)
=
-i\pi J(t,\lambda,\eta)
+
E_1(\tau,\lambda,\eta)
-
E_2(\tau,\lambda,\eta)
+
E_0(\tau,\lambda,\eta),
\end{equation}
where
\begin{equation}
\label{eq:propa3-E1-def}
E_1(\tau,\lambda,\eta)
=
\int_0^\infty\int_{-\infty}^0
e^{i\tau\Psi_\eta(v,\mu)}
e^{izv}
\frac{f_v(\lambda)}{\mu-\lambda}\,d\mu\,dv,
\end{equation}
\begin{equation}
\label{eq:propa3-E2-def}
E_2(\tau,\lambda,\eta)
=
\int_0^\infty\int_0^\infty
e^{i\tau\Psi_\eta(v,\mu)}
e^{izv}
G_v(\mu;\lambda)\,d\mu\,dv,
\end{equation}
\begin{equation}
\label{eq:propa3-E0-def}
E_0(\tau,\lambda,\eta)
=
\int_0^\infty\int_0^\infty
e^{i\tau\Psi_\eta(v,\mu)}
e^{izv}
H_v(\mu;\lambda)\,d\mu\,dv,
\end{equation}
and
\[
\Psi_\eta(v,\mu):=v^2-2\eta v+2\mu v.
\]
The estimates below justify the improper limits and the passage from the truncated
principal values to \eqref{eq:propa3-error-decomposition}. From
\eqref{eq:propa3-error-decomposition},
\begin{equation}
\label{eq:propa3-main-reduction}
J(t,\lambda,\eta)-\frac{i}{\pi}I(t,\lambda,\eta)
=
-\frac{i}{\pi}
\bigl(
E_1(\tau,\lambda,\eta)
-
E_2(\tau,\lambda,\eta)
+
E_0(\tau,\lambda,\eta)
\bigr).
\end{equation}
It remains to prove that \(E_1,E_2,E_0=O(\tau^{-1})\), uniformly for
\((\eta,\lambda)\in\Lambda\times K\).

\medskip
\noindent\textbf{Step 2: amplitude bounds.}
Since \(\mathfrak u,\mathfrak v\in\mathcal S_*\), for every \(a,b,N\ge0\),
\begin{equation}
\label{eq:propa3-f-bound}
|\partial_v^a\partial_\mu^b f_v(\mu)|
\le
C_{a,b,N}(1+\mu+v)^{-N},
\qquad v\ge0,\quad \mu\ge0.
\end{equation}
Because \(\chi=1\) in a neighborhood of \(K\), the corrected difference quotient
\(G_v\) is smooth at \(\mu=\lambda\). Moreover, the cutoff removes the non-decaying
constant tail. Hence, for every \(a,b,N\ge0\),
\begin{equation}
\label{eq:propa3-G-bound}
|\partial_v^a\partial_\mu^bG_v(\mu;\lambda)|
\le
C_{a,b,N,K}(1+\mu+v)^{-N},
\qquad v\ge0,\quad \mu\ge0,\quad \lambda\in K.
\end{equation}
On the other hand, since \(1-\chi\) vanishes in a neighborhood of \(K\), the denominator
\(\mu-\lambda\) in \(H_v\) is bounded away from zero on the support of \(1-\chi\), for
\(\lambda\in K\). Thus, for every \(a,b,N\ge0\),
\begin{equation}
\label{eq:propa3-H-bound}
|\partial_v^a\partial_\mu^bH_v(\mu;\lambda)|
\le
C_{a,b,N,K}(1+v)^{-N}(1+\mu)^{-1-b},
\qquad v\ge0,\quad \mu\ge0,\quad \lambda\in K.
\end{equation}

\medskip
\noindent\textbf{Step 3: estimate of \(E_1\).}
We prove
\begin{equation}
\label{eq:propa3-E1-estimate}
\sup_{\eta\in\Lambda,\lambda\in K}
|E_1(\tau,\lambda,\eta)|
\le
C_{\Lambda,K,\mathfrak u,\mathfrak v,z}\tau^{-1},
\qquad \tau\ge1.
\end{equation}
Set
\[
Q_\lambda(v):=e^{izv}f_v(\lambda)
=
e^{izv}\overline{\mathfrak u(\lambda)}\mathfrak v(\lambda+v).
\]
Then, for every \(m,N\ge0\),
\begin{equation}
\label{eq:propa3-Q-bound}
\sup_{\lambda\in K,\ v\ge0}
(1+v)^N|\partial_v^mQ_\lambda(v)|
\le
C_{m,N,K,\mathfrak u,\mathfrak v,z}.
\end{equation}
Changing variables \(\mu=-\rho\) in \eqref{eq:propa3-E1-def}, we get
\[
E_1(\tau,\lambda,\eta)
=
-\int_0^\infty\int_0^\infty
e^{i\tau\Phi_{\eta,\rho}(v)}
\frac{Q_\lambda(v)}{\rho+\lambda}\,d\rho\,dv,
\]
where
\[
\Phi_{\eta,\rho}(v):=v^2-2(\eta+\rho)v.
\]
Let
\[
\eta_0:=\inf\Lambda>0,
\qquad
\lambda_0:=\inf K>0,
\qquad
\delta:=\frac{\eta_0}{4}.
\]
Split
\[
E_1=E_{1,\mathrm{s}}+E_{1,\mathrm{l}},
\]
where \(E_{1,\mathrm{s}}\) is the contribution of \(0\le v\le\delta\), and
\(E_{1,\mathrm{l}}\) is the contribution of \(v\ge\delta\).

For \(0\le v\le\delta\),
\[
|\partial_v\Phi_{\eta,\rho}(v)|
=
2(\eta+\rho-v)
\ge
c(\eta+\rho),
\]
uniformly for \(\eta\in\Lambda\) and \(\rho\ge0\). Integrating once by parts in \(v\), and
using \eqref{eq:propa3-Q-bound}, gives
\[
\left|
\int_0^\delta e^{i\tau\Phi_{\eta,\rho}(v)}Q_\lambda(v)\,dv
\right|
\le
C\tau^{-1}\frac1{\eta+\rho}.
\]
Therefore
\[
|E_{1,\mathrm{s}}|
\le
C\tau^{-1}
\int_0^\infty
\frac{d\rho}{(\rho+\lambda)(\rho+\eta)}
\le
C_{\Lambda,K,\mathfrak u,\mathfrak v,z}\tau^{-1}.
\]
For \(v\ge\delta\), integrate by parts in \(\rho\):
\[
\int_0^R\frac{e^{-i2\tau v\rho}}{\rho+\lambda}\,d\rho
=
\left[
\frac{e^{-i2\tau v\rho}}{-i2\tau v(\rho+\lambda)}
\right]_{\rho=0}^{\rho=R}
-
\frac1{i2\tau v}
\int_0^R
\frac{e^{-i2\tau v\rho}}{(\rho+\lambda)^2}\,d\rho .
\]
Thus
\[
\left|
\int_0^R\frac{e^{-i2\tau v\rho}}{\rho+\lambda}\,d\rho
\right|
\le
\frac{C_K}{\tau v},
\qquad v\ge\delta.
\]
Using \eqref{eq:propa3-Q-bound} and letting \(R\to\infty\), we obtain
\[
|E_{1,\mathrm{l}}|
\le
C_K\tau^{-1}
\int_\delta^\infty \frac{|Q_\lambda(v)|}{v}\,dv
\le
C_{\Lambda,K,\mathfrak u,\mathfrak v,z}\tau^{-1}.
\]
Combining the estimates for \(E_{1,\mathrm{s}}\) and \(E_{1,\mathrm{l}}\) proves
\eqref{eq:propa3-E1-estimate}.

\medskip
\noindent\textbf{Step 4: estimate of the cutoff-tail term \(E_0\).}
We prove
\begin{equation}
\label{eq:propa3-E0-estimate}
\sup_{\eta\in\Lambda,\lambda\in K}
|E_0(\tau,\lambda,\eta)|
\le
C_{\Lambda,K,\mathfrak u,\mathfrak v,z}\tau^{-1},
\qquad \tau\ge1.
\end{equation}
Since \(\Lambda\Subset(0,\infty)\), there exists
\(\delta_0>0\) such that, on the support of \(1-\chi\), for \(0\le v\le\delta_0\),
\begin{equation}
\label{eq:propa3-tail-smallv-phase}
|\partial_v\Psi_\eta(v,\mu)|
=
2|v+\mu-\eta|
\ge
c_\Lambda(1+\mu),
\qquad \eta\in\Lambda,\quad \mu\ge0.
\end{equation}
Let \(E_{0,\mathrm{s}}\) and \(E_{0,\mathrm{l}}\) be the contributions of
\(0\le v\le\delta_0\) and \(v\ge\delta_0\), respectively.

For the small \(v\) part, integrate by parts in \(v\). By
\eqref{eq:propa3-H-bound} and \eqref{eq:propa3-tail-smallv-phase},
\[
\left|
\int_0^{\delta_0}
e^{i\tau\Psi_\eta(v,\mu)}
e^{izv}H_v(\mu;\lambda)\,dv
\right|
\le
C\tau^{-1}(1+\mu)^{-2}.
\]
Integrating in \(\mu\), we get
\[
|E_{0,\mathrm{s}}|
\le
C_{\Lambda,K,\mathfrak u,\mathfrak v,z}\tau^{-1}.
\]
For \(v\ge\delta_0\), use
\[
\partial_\mu\Psi_\eta(v,\mu)=2v.
\]
Integrating once by parts in \(\mu\), and using \eqref{eq:propa3-H-bound}, gives
\[
\left|
\int_0^\infty
e^{i\tau\Psi_\eta(v,\mu)}
e^{izv}H_v(\mu;\lambda)\,d\mu
\right|
\le
\frac{C}{\tau v}
\left(
|H_v(0;\lambda)|
+
\int_0^\infty |\partial_\mu H_v(\mu;\lambda)|\,d\mu
\right).
\]
The expression in parentheses decays rapidly in \(v\), uniformly for \(\lambda\in K\).
Therefore
\[
|E_{0,\mathrm{l}}|
\le
C\tau^{-1}
\int_{\delta_0}^\infty
\frac{(1+v)^{-2}}{v}\,dv
\le
C_{\Lambda,K,\mathfrak u,\mathfrak v,z}\tau^{-1}.
\]
Combining the estimates for \(E_{0,\mathrm{s}}\) and \(E_{0,\mathrm{l}}\) proves
\eqref{eq:propa3-E0-estimate}.

\medskip
\noindent\textbf{Step 5: estimate of \(E_2\).}
We prove
\begin{equation}
\label{eq:propa3-E2-estimate}
\sup_{\eta\in\Lambda,\lambda\in K}
|E_2(\tau,\lambda,\eta)|
\le
C_{\Lambda,K,\mathfrak u,\mathfrak v,z}\tau^{-1},
\qquad \tau\ge1.
\end{equation}
The phase
\[
\Psi_\eta(v,\mu)=v^2-2\eta v+2\mu v
\]
has a unique critical point on
\[
D_2:=\{(v,\mu):v\ge0,\ \mu\ge0\},
\]
namely \((v,\mu)=(0,\eta)\).

Choose \(\chi_0\in C_c^\infty(\mathbb R^2)\) such that
\[
\chi_0\equiv1\quad\text{on }\{|x|\le1/2,\ |y|\le1/2\},
\qquad
\operatorname{supp}\chi_0\subset\{|x|\le1,\ |y|\le1\}.
\]
For \(\eta\in\Lambda\), define
\[
\chi_\eta(v,\mu)
:=
\chi_0\left(\frac{4v}{\eta_0},\frac{4(\mu-\eta)}{\eta_0}\right).
\]
Then
\[
\operatorname{supp}\chi_\eta
\subset
\left\{
0\le v\le\frac{\eta_0}{4},
\quad
|\mu-\eta|\le\frac{\eta_0}{4}
\right\}
\subset D_2,
\]
and all derivatives of \(\chi_\eta\) are bounded uniformly for \(\eta\in\Lambda\).

Decompose
\[
E_2=E_2^{\mathrm{near}}+E_2^{\mathrm{far}},
\]
where
\[
E_2^{\mathrm{near}}
:=
\iint_{D_2}
e^{i\tau\Psi_\eta(v,\mu)}
\chi_\eta(v,\mu)e^{izv}G_v(\mu;\lambda)\,d\mu\,dv,
\]
and
\[
E_2^{\mathrm{far}}
:=
\iint_{D_2}
e^{i\tau\Psi_\eta(v,\mu)}
(1-\chi_\eta(v,\mu))e^{izv}G_v(\mu;\lambda)\,d\mu\,dv.
\]

\smallskip
\noindent\emph{Far part.}
Choose \(\theta\in C^\infty([0,\infty))\) such that
\[
0\le\theta\le1,\qquad
\theta(v)=0\ \text{for }0\le v\le\frac{\eta_0}{32},
\qquad
\theta(v)=1\ \text{for }v\ge\frac{\eta_0}{16}.
\]
Write
\[
E_2^{\mathrm{far}}=F_1+F_2,
\]
where \(F_1\) has amplitude
\[
\theta(v)(1-\chi_\eta(v,\mu))e^{izv}G_v(\mu;\lambda),
\]
and \(F_2\) has amplitude
\[
(1-\theta(v))(1-\chi_\eta(v,\mu))e^{izv}G_v(\mu;\lambda).
\]
On the support of the amplitude of \(F_1\),
\[
v\ge\frac{\eta_0}{32},
\]
and hence
\[
|\partial_\mu\Psi_\eta(v,\mu)|=2v\ge\frac{\eta_0}{16}.
\]
Integrating once by parts in \(\mu\), and using \eqref{eq:propa3-G-bound}, gives
\[
\sup_{\eta\in\Lambda,\lambda\in K}|F_1|
\le
C_{\Lambda,K,\mathfrak u,\mathfrak v,z}\tau^{-1}.
\]
On the support of the amplitude of \(F_2\), we have
\[
v\le\frac{\eta_0}{16}.
\]
Moreover, since \(\chi_\eta\equiv1\) whenever
\[
v\le\frac{\eta_0}{8},
\qquad
|\mu-\eta|\le\frac{\eta_0}{8},
\]
the factor \(1-\chi_\eta\) implies
\[
|\mu-\eta|\ge\frac{\eta_0}{8}
\]
on the support of \(F_2\). Therefore
\[
|\partial_v\Psi_\eta(v,\mu)|
=
2|v+\mu-\eta|
\ge
\frac{\eta_0}{8}.
\]
Integrating once by parts in \(v\), keeping the boundary term at \(v=0\), and using
\eqref{eq:propa3-G-bound}, we obtain
\[
\sup_{\eta\in\Lambda,\lambda\in K}|F_2|
\le
C_{\Lambda,K,\mathfrak u,\mathfrak v,z}\tau^{-1}.
\]
Thus
\begin{equation}
\label{eq:propa3-E2-far-estimate}
\sup_{\eta\in\Lambda,\lambda\in K}
|E_2^{\mathrm{far}}(\tau,\lambda,\eta)|
\le
C_{\Lambda,K,\mathfrak u,\mathfrak v,z}\tau^{-1}.
\end{equation}

\smallskip
\noindent\emph{Near part.}
Inside the support of \(\chi_\eta\), introduce
\[
y:=\mu+v-\eta,
\qquad
\mu=y-v+\eta.
\]
Then
\[
\Psi_\eta(v,\mu)
=
v^2-2\eta v+2v(y-v+\eta)
=
2vy-v^2.
\]
Therefore
\begin{equation}
\label{eq:propa3-E2-near-rewrite}
E_2^{\mathrm{near}}(\tau,\lambda,\eta)
=
\int_0^\infty e^{-i\tau v^2}
\left(
\int_{\mathbb R}e^{i2\tau vy}b_{\eta,\lambda}(v,y)\,dy
\right)dv,
\end{equation}
where
\[
b_{\eta,\lambda}(v,y)
:=
\chi_\eta(v,y-v+\eta)e^{izv}G_v(y-v+\eta;\lambda).
\]
Because of \eqref{eq:propa3-G-bound}, the compact support of \(\chi_\eta\), and the
uniform bounds on the derivatives of \(\chi_\eta\), the function \(b_{\eta,\lambda}\) and
all its derivatives are bounded uniformly in \((\eta,\lambda)\in\Lambda\times K\), and are
supported in a fixed compact subset of the \((v,y)\)-plane.

For fixed \(v\ge0\), define
\[
B_{\eta,\lambda}(v,\tau)
:=
\int_{\mathbb R}e^{i2\tau vy}b_{\eta,\lambda}(v,y)\,dy.
\]
For every \(N\ge0\), integration by parts in \(y\) gives, for \(v>0\),
\[
|B_{\eta,\lambda}(v,\tau)|
\le
C_N(\tau v)^{-N}.
\]
On the other hand,
\[
|B_{\eta,\lambda}(v,\tau)|\le C_0.
\]
Hence
\begin{equation}
\label{eq:propa3-B-bound}
|B_{\eta,\lambda}(v,\tau)|
\le
C_N(1+\tau v)^{-N},
\qquad v\ge0,
\end{equation}
uniformly in \((\eta,\lambda)\in\Lambda\times K\). Taking \(N=2\), and using
\eqref{eq:propa3-E2-near-rewrite}, we obtain
\[
\begin{aligned}
|E_2^{\mathrm{near}}(\tau,\lambda,\eta)|
&\le
\int_0^\infty |B_{\eta,\lambda}(v,\tau)|\,dv  \\
&\le
\int_0^{\tau^{-1}} C\,dv
+
\int_{\tau^{-1}}^\infty C(\tau v)^{-2}\,dv
\le
C_{\Lambda,K,\mathfrak u,\mathfrak v,z}\tau^{-1}.
\end{aligned}
\]
Together with \eqref{eq:propa3-E2-far-estimate}, this proves
\eqref{eq:propa3-E2-estimate}.

\medskip
\noindent\textbf{Step 6: conclusion.}
Combining \eqref{eq:propa3-main-reduction}, \eqref{eq:propa3-E1-estimate},
\eqref{eq:propa3-E0-estimate}, and \eqref{eq:propa3-E2-estimate}, we get
\[
\sup_{\eta\in\Lambda,\lambda\in K}
\left|
J(t,\lambda,\eta)-\frac{i}{\pi}I(t,\lambda,\eta)
\right|
\le
C_{\Lambda,K,\mathfrak u,\mathfrak v,z}\tau^{-1}.
\]
Since \(\tau=|t|\), this proves the stronger estimate
\[
\sup_{\eta\in\Lambda,\lambda\in K}
\left|
J(t,\lambda,\eta)-\frac{i}{\pi}I(t,\lambda,\eta)
\right|
\le
C_{\Lambda,K,\mathfrak u,\mathfrak v,z}|t|^{-1},
\qquad t\le-1.
\]
Consequently,
\[
\sup_{\eta\in\Lambda,\lambda\in K}
|t|^{1/2}
\left|
J(t,\lambda,\eta)-\frac{i}{\pi}I(t,\lambda,\eta)
\right|
\le
C_{\Lambda,K,\mathfrak u,\mathfrak v,z}|t|^{-1/2}
\to0
\]
as \(t\to-\infty\). This completes the proof.
\end{proof}
The next estimate is a version of van der Corput lemma for the free resolvent formula. It is
uniform in the parameter without requiring compactness away from \(0\).
\begin{lemma}
\label{lemma a.3}
Let \(f\in C_c^\infty(0,\infty)\), let \(\Lambda\subset(0,\infty)\), and let
\(z\in\mathbb C_+\). For \(t\in\mathbb R\), \(\lambda>0\), and \(\eta\in\Lambda\), define
\[
F(t,\lambda,\eta)
:=
e^{it(\lambda-\eta)^2-iz\lambda}
\int_\lambda^\infty
e^{-it(\zeta-\eta)^2+iz\zeta}f(\zeta)\,d\zeta .
\]
Then there exists a constant \(C=C_{\Lambda,f,z}>0\) such that
\begin{equation}
\label{A.3.1}
\sup_{\eta\in\Lambda}\sup_{\lambda>0}
|F(t,\lambda,\eta)|
\le
C_{\Lambda,f,z}|t|^{-1/2}
\qquad\text{for all } |t|\ge1 .
\end{equation}
Equivalently,
\begin{equation}
\label{A.3.2}
\sup_{\eta\in\Lambda}
\left\|
e^{it\lambda^2}
(i\partial_\lambda-z-2t\eta)^{-1}
e^{-it\lambda^2}f
\right\|_{L^\infty(0,\infty)}
\le
C_{\Lambda,f,z}|t|^{-1/2}
\qquad\text{for all } |t|\ge1 .
\end{equation}
\end{lemma}

\begin{proof}
Let
\[
\operatorname{supp}f\subset [a,b]
\]
for some \(0<a<b<\infty\). If \(\lambda>b\), then \(f(\zeta)=0\) for every
\(\zeta\ge\lambda\), and hence
\[
F(t,\lambda,\eta)=0.
\]
Thus it suffices to consider \(0<\lambda\le b\).\\\\
We make the change of variables
\[
\zeta=\lambda+s,\qquad s\ge0.
\]
Since
\[
(\lambda+s-\eta)^2
=
(\lambda-\eta)^2+2(\lambda-\eta)s+s^2,
\]
we obtain
\[
F(t,\lambda,\eta)
=
\int_0^\infty
e^{-it(s^2+2(\lambda-\eta)s)+izs}
f(\lambda+s)\,ds .
\]
Because \(f\) is supported in \([a,b]\), the integrand vanishes for \(s>b-\lambda\).
Thus
\[
F(t,\lambda,\eta)
=
\int_0^{b-\lambda}
e^{it\Phi_{\lambda,\eta}(s)}a_\lambda(s)\,ds,
\]
where
\[
\Phi_{\lambda,\eta}(s)
:=
-\bigl(s^2+2(\lambda-\eta)s\bigr),
\qquad
a_\lambda(s):=e^{izs}f(\lambda+s).
\]
The key point is that
\[
\partial_s^2\Phi_{\lambda,\eta}(s)=-2
\]
for all \(s\), \(\lambda\), and \(\eta\). Hence the phase is uniformly non-degenerate,
uniformly for \(\eta\in\Lambda\).\\\\
We now estimate the amplitude. Since \(\Im z>0\),
\[
|e^{izs}|=e^{-\Im z\,s}\le1,
\]
and therefore
\[
\|a_\lambda\|_{L^\infty(0,\infty)}
\le
\|f\|_{L^\infty(0,\infty)} .
\]
Moreover,
\[
\partial_s a_\lambda(s)
=
e^{izs}\bigl(iz f(\lambda+s)+f'(\lambda+s)\bigr),
\]
so
\[
|\partial_s a_\lambda(s)|
\le
e^{-\Im z\,s}
\left(
|z|\,|f(\lambda+s)|+|f'(\lambda+s)|
\right).
\]
Consequently,
\[
\begin{aligned}
\|\partial_s a_\lambda\|_{L^1(0,\infty)}
&\le
|z|\int_0^\infty e^{-\Im z\,s}|f(\lambda+s)|\,ds
+
\int_0^\infty e^{-\Im z\,s}|f'(\lambda+s)|\,ds  \\
&\le
|z|\|f\|_{L^1(0,\infty)}
+
\|f'\|_{L^1(0,\infty)} .
\end{aligned}
\]
Thus
\begin{equation}
\label{A.3.3}
\sup_{\lambda>0}
\left(
\|a_\lambda\|_{L^\infty(0,\infty)}
+
\|\partial_s a_\lambda\|_{L^1(0,\infty)}
\right)
<\infty .
\end{equation}
By the one-dimensional van der Corput lemma for phases with non-vanishing second
derivative, and since
\[
|\partial_s^2\Phi_{\lambda,\eta}|=2
\]
uniformly in \(\lambda\) and \(\eta\), we have
\[
|F(t,\lambda,\eta)|
\le
C|t|^{-1/2}
\left(
\|a_\lambda\|_{L^\infty(0,\infty)}
+
\|\partial_s a_\lambda\|_{L^1(0,\infty)}
\right)
\]
for all \(|t|\ge1\). Combining this with \eqref{A.3.3}, we obtain
\[
\sup_{\eta\in\Lambda}\sup_{\lambda>0}
|F(t,\lambda,\eta)|
\le
C_{\Lambda,f,z}|t|^{-1/2}.
\]
This proves \eqref{A.3.1}.\\\\
Finally, using the explicit resolvent formula
\[
(i\partial_\lambda-z-2t\eta)^{-1}g(\lambda)
=
i e^{-i(z+2t\eta)\lambda}
\int_\lambda^\infty
e^{i(z+2t\eta)\zeta}g(\zeta)\,d\zeta,
\]
with \(g(\lambda)=e^{-it\lambda^2}f(\lambda)\), we get
\[
e^{it\lambda^2}
(i\partial_\lambda-z-2t\eta)^{-1}
e^{-it\lambda^2}f(\lambda)
=
iF(t,\lambda,\eta).
\]
Therefore \eqref{A.3.2} follows from \eqref{A.3.1}.
\end{proof}
Combining Lemma \ref{lemma a.3}, we obtain the compact-uniform estimate used to control the
approximation error in the proof of the main theorem.
\begin{corollary}[Compact-uniform \(I_1\) estimate]
\label{corollary a.4}
Let \(z\in\mathbb C_+\), let \(\Lambda\subset(0,\infty)\), and let
\(v\in C_c^\infty(0,\infty)\). For \(u_1,u_2\in L^2(0,\infty)\), set
\[
G_{t,\eta}v
:=
e^{it\lambda^2}
(i\partial_\lambda-z-2t\eta)^{-1}
e^{-it\lambda^2}v .
\]
Then there exists a constant \(C=C_{\Lambda,z,v}>0\) such that, for all \(|t|\ge1\),
\begin{equation}
\label{A.3.4}
\sup_{\eta\in\Lambda}
|t|^{1/2}
\left\|
\bigl(B(u_1)-B(u_2)\bigr)G_{t,\eta}v
\right\|_{L^1(0,\infty)}
\le
C_{\Lambda,z,v}
\|u_1-u_2\|_{L^2(0,\infty)}
\bigl(
\|u_1\|_{L^2(0,\infty)}
+
\|u_2\|_{L^2(0,\infty)}
\bigr).
\end{equation}
Here
\[
B(u)h
:=
-\frac1{4\pi}
\left(
iu\,H(\overline u\,h)-|u|^2h
\right),
\]
where \(H\) denotes the Hilbert transform on the positive half-line.
\end{corollary}

\begin{proof}
By Lemma \ref{lemma a.3}, we have
\begin{equation}
\label{A.3.5}
\sup_{\eta\in\Lambda}
\|G_{t,\eta}v\|_{L^\infty(0,\infty)}
\le
C_{\Lambda,z,v}|t|^{-1/2}
\qquad\text{for all } |t|\ge1 .
\end{equation}
Let
\[
g:=G_{t,\eta}v.
\]
Using the definition of \(B(u)\), we write
\begin{equation}
\label{A.3.6}
(B(u_1)-B(u_2))g
=
-\frac1{4\pi}
\Bigl[
i(u_1-u_2)H(\overline{u_1}g)
+
iu_2H\bigl((\overline{u_1}-\overline{u_2})g\bigr) 
-
\bigl(|u_1|^2-|u_2|^2\bigr)g
\Bigr].
\end{equation}
Since \(H\) is bounded on \(L^2(0,\infty)\), we have
\begin{equation}
\label{A.3.7}
\begin{aligned}
\|(u_1-u_2)H(\overline{u_1}g)\|_{L^1}
&\le
\|u_1-u_2\|_{L^2}
\|H(\overline{u_1}g)\|_{L^2}  \\
&\le
C
\|u_1-u_2\|_{L^2}
\|\overline{u_1}g\|_{L^2}  \\
&\le
C
\|u_1-u_2\|_{L^2}
\|u_1\|_{L^2}
\|g\|_{L^\infty}.
\end{aligned}
\end{equation}
Similarly,
\begin{equation}
\label{A.3.8}
\begin{aligned}
\|u_2H((\overline{u_1}-\overline{u_2})g)\|_{L^1}
&\le
\|u_2\|_{L^2}
\|H((\overline{u_1}-\overline{u_2})g)\|_{L^2}  \\
&\le
C
\|u_2\|_{L^2}
\|u_1-u_2\|_{L^2}
\|g\|_{L^\infty}.
\end{aligned}
\end{equation}
For the multiplication term, we use
\[
|u_1|^2-|u_2|^2
=
(u_1-u_2)\overline{u_1}
+
u_2(\overline{u_1}-\overline{u_2}),
\]
and obtain
\begin{equation}
\label{A.3.9}
\begin{aligned}
\bigl\|
(|u_1|^2-|u_2|^2)g
\bigr\|_{L^1}
&\le
\|g\|_{L^\infty}
\bigl\||u_1|^2-|u_2|^2\bigr\|_{L^1} \\
&\le
\|g\|_{L^\infty}
\|u_1-u_2\|_{L^2}
\bigl(
\|u_1\|_{L^2}+\|u_2\|_{L^2}
\bigr).
\end{aligned}
\end{equation}
Combining \eqref{A.3.6}, \eqref{A.3.7}, \eqref{A.3.8}, and \eqref{A.3.9}, we get
\[
\left\|
(B(u_1)-B(u_2))g
\right\|_{L^1}
\le
C
\|g\|_{L^\infty}
\|u_1-u_2\|_{L^2}
\bigl(
\|u_1\|_{L^2}+\|u_2\|_{L^2}
\bigr).
\]
Using \eqref{A.3.5}, and then taking the supremum over \(\eta\in\Lambda\), we conclude
\[
\sup_{\eta\in\Lambda}
|t|^{1/2}
\left\|
(B(u_1)-B(u_2))G_{t,\eta}v
\right\|_{L^1}
\le
C_{\Lambda,z,v}
\|u_1-u_2\|_{L^2}
\bigl(
\|u_1\|_{L^2}+\|u_2\|_{L^2}
\bigr).
\]
This proves \eqref{A.3.4}.
\end{proof}

\subsection{Resolvent estimates and the
construction of the rough boundary functional}
In this part, the auxiliary results concern boundary values of resolvents.
We first give elementary bounds for the free resolvent, and then use them to
construct the boundary functional needed for rough spectral data.\\\\
Now we give here the elementary estimates for the free resolvent
\((i\partial_\lambda-z)^{-1}\) on the positive half-line. These bounds are repeatedly
used to pass between \(L^1\), \(L^2\) and \(L^\infty\) spaces in the construction of the rough boundary functional.

\begin{proposition}
\label{prop 2}
Let $z\in\mathbb C_{+}$. We have
\begin{equation}\label{eq:L1-L2-bound 4}
\forall f \in L^2(0,\infty), \qquad \|(i\partial_\lambda-z)^{-1}f\|_{L^2(0,\infty)}
\leq \frac{1}{\Im z}\,\|f\|_{L^2(0,\infty)}.
\end{equation}
\begin{equation}\label{eq:L1-L2-bound 1}
\forall f \in L^1(0,\infty), \qquad \|(i\partial_\lambda-z)^{-1}f\|_{L^2(0,\infty)}
\leq \frac{1}{\sqrt{2\,\Im z}}\,\|f\|_{L^1(0,\infty)}.
\end{equation}
\begin{equation}\label{eq:L1-L2-bound 2}
\forall f \in L^2(0,\infty), \qquad \|(i\partial_\lambda-z)^{-1}f\|_{L^\infty(0,\infty)}
\leq \frac{1}{\sqrt{2\,\Im z}}\,\|f\|_{L^2(0,\infty)}.
\end{equation}
\begin{equation}\label{eq:L1-L2-bound 3}
\forall f \in L^1(0,\infty), \qquad\|(i\partial_\lambda-z)^{-1}f\|_{L^\infty(0,\infty)}
\leq \|f\|_{L^1(0,\infty)}.
\end{equation}
\end{proposition}
\begin{proof}
Consider the inhomogeneous first-order ODE
\[
(i\partial_\lambda-z)g=f,
\]
which is equivalent to
\[
g'(\lambda)+iz\,g(\lambda)=-i f(\lambda).
\]
Multiplying by the integrating factor $e^{iz\lambda}$ yields
\[
\frac{d}{d\lambda}\bigl(e^{iz\lambda}g(\lambda)\bigr)
=-i\,e^{iz\lambda}f(\lambda).
\]
Integrating from $\lambda$ to $+\infty$ and using the boundary condition $g(\infty)=0$ gives
\[
e^{iz\lambda}g(\lambda)
= i\int_\lambda^\infty e^{iz s} f(s)\,ds.
\]
Then we set $r=s-\lambda\ge 0$ to obtain
\[
g(\lambda)= i\int_0^\infty e^{iz r}\,f(\lambda+r)\,dr.
\]
Since $f \in L^1(0,\infty)$, we can easily infer \eqref{eq:L1-L2-bound 3}. Since $\Im z>0$,
\[
\|\mathrm{e}^{iz r}\|_{L^2(0,\infty)}^2
=\int_0^\infty \mathrm{e}^{-2(\Im z) r}\,dr
=\frac{1}{2\,\Im z}, \qquad \|\mathrm{e}^{iz r}\|_{L^1(0,\infty)}
=\int_0^\infty \mathrm{e}^{-\Im z r}\,dr
=\frac{1}{\Im z}.
\]
By Cauchy--Schwarz inequality we can deduce \eqref{eq:L1-L2-bound 2}. Finally, by Young's inequality for the (right-shift)
convolution on $(0,\infty)$,
\begin{align*}
&\|g\|_{L^2(0,\infty)}
\le \|\mathrm{e}^{iz r}\|_{L^1(0,\infty)}\,\|f\|_{L^2(0,\infty)}
= \frac{1}{\Im z}\,\|f\|_{L^2(0,\infty)} \\
&\|g\|_{L^2(0,\infty)}
\le \|\mathrm{e}^{iz r}\|_{L^2(0,\infty)}\,\|f\|_{L^1(0,\infty)}
= \frac{1}{\sqrt{2\,\Im z}}\,\|f\|_{L^1(0,\infty)}.
\end{align*}
This proves \eqref{eq:L1-L2-bound 4} and \eqref{eq:L1-L2-bound 1}.
\end{proof}
We now construct the following rough boundary functional.

\begin{lemma}[The rough boundary functional]
\label{coro a.7}
Let
\[
\mathfrak u\in L^2(0,\infty).
\]
For \(w\in L^2(0,\infty)\) and \(r\in L^\infty(0,\infty)\), define
\[
B(w)r
:=
-\frac1{4\pi}
\left(
iw\,H(\overline w r)-|w|^2r
\right),
\]
where \(H\) denotes the Hilbert transform on the positive half-line,
\[
Hh(\mu)
=
\frac1\pi\operatorname{p.v.}
\int_0^\infty
\frac{h(\lambda)}{\mu-\lambda}\,d\lambda .
\]
For smooth \(w\in\mathcal S_*\), set
\[
A(w,t,z)
:=
\left(
i\partial_\lambda
+
e^{-it\lambda^2}B(w)e^{it\lambda^2}
-z
\right)^{-1},
\qquad z\in\mathbb C_+ .
\]
Then there exists a family of bounded linear functionals
\[
\mathcal J_{\mathfrak u}(t,z):L^1(0,\infty)\to\mathbb C,
\qquad t\in\mathbb R,\quad z\in\mathbb C_+,
\]
with the following properties.

First, for every \(\gamma>0\), there exists \(C_{\mathfrak u,\gamma}>0\) such that
\begin{equation}
\label{A7-boundary-L1-bound}
|\mathcal J_{\mathfrak u}(t,z)[f]|
\le
C_{\mathfrak u,\gamma}\|f\|_{L^1(0,\infty)}
\end{equation}
for all \(t\in\mathbb R\), all \(z\in\mathbb C_+\) with
\[
\Im z\ge\gamma,
\]
and all \(f\in L^1(0,\infty)\). The constant is independent of \(t\) and
\(\Re z\).

Second, if
\[
\mathfrak u_n\in \mathcal S_*,
\qquad
\mathfrak u_n\to\mathfrak u
\quad\text{in }L^2(0,\infty),
\]
and if
\[
f_n\in L^1(0,\infty)\cap L^2(0,\infty),
\qquad
f_n\to f
\quad\text{in }L^1(0,\infty),
\]
then, for every fixed \(t\in\mathbb R\) and \(z\in\mathbb C_+\),
\begin{equation}
\label{A7-boundary-approx-conv}
J_+\bigl[A(\mathfrak u_n,t,z)f_n\bigr]
\longrightarrow
\mathcal J_{\mathfrak u}(t,z)[f],
\end{equation}
where \(J_+\) is defined in Proposition \(\ref{prop:Xstar_under_Fd}\).

We shall use the notation
\begin{equation}
\label{A7-boundary-notation}
J_+\bigl[A(\mathfrak u,t,z)f\bigr]
:=
\mathcal J_{\mathfrak u}(t,z)[f],
\qquad f\in L^1(0,\infty).
\end{equation}
\end{lemma}

\begin{proof}
We write
\[
R_z:=(i\partial_\lambda-z)^{-1},
\qquad
\mathcal C_H:=\frac1{4\pi}(\operatorname{Id}-iH).
\]
Thus
\[
B(w)=M_w\mathcal C_HM_{\overline w}
\]
whenever both sides are defined. Indeed,
\[
M_w\mathcal C_HM_{\overline w}g
=
\frac1{4\pi}
\left(
|w|^2g-iwH(\overline wg)
\right)
=
-\frac1{4\pi}
\left(
iwH(\overline wg)-|w|^2g
\right).
\]
For \(w\in L^2(0,\infty)\), set
\[
M_{w,t}:=M_{e^{-it\lambda^2}w},
\qquad
M_{\overline w,t}:=M_{\overline w\,e^{it\lambda^2}}.
\]
For smooth \(w\), we have
\[
e^{-it\lambda^2}B(w)e^{it\lambda^2}
=
M_{w,t}\mathcal C_HM_{\overline w,t}.
\]
We also introduce
\begin{equation}
\label{A7-Gamma-def}
\Gamma_w(t,z)
:=
\mathcal C_HM_{\overline w,t}R_zM_{w,t}.
\end{equation}

We first record that \(B(w)r\in L^1(0,\infty)\) whenever
\(w\in L^2(0,\infty)\) and \(r\in L^\infty(0,\infty)\). Since \(H\) is bounded
on \(L^2(0,\infty)\),
\[
\begin{aligned}
\|B(w)r\|_{L^1}
&\le
C\|wH(\overline wr)\|_{L^1}
+
C\||w|^2r\|_{L^1}       \\
&\le
C\|w\|_{L^2}\|H(\overline wr)\|_{L^2}
+
C\|w\|_{L^2}^2\|r\|_{L^\infty}  \\
&\le
C\|w\|_{L^2}^2\|r\|_{L^\infty}.
\end{aligned}
\]

\medskip
\noindent
\textbf{Step 1: Basic \(L^2\) and Hilbert--Schmidt estimates.}
For \(z=x+iy\in\mathbb C_+\), the free resolvent is given by
\[
(R_z f)(\lambda) : = (i\partial_\lambda-z)^{-1} f
=
i\int_\lambda^\infty e^{iz(s-\lambda)}f(s)\,ds .
\]
By Proposition \ref{prop 2}, we have
\begin{equation}
\label{A7-Rz-basic-bounds}
\|R_z\|_{L^1\to L^\infty}\le1,
\qquad
\|R_z\|_{L^1\to L^2}\le (2y)^{-1/2},
\end{equation}
and
\begin{equation}
\label{A7-Rz-L2-bounds}
\|R_z\|_{L^2\to L^\infty}\le (2y)^{-1/2},
\qquad
\|R_z\|_{L^2\to L^2}\le y^{-1}.
\end{equation}
For \(a,b\in L^2(0,\infty)\), the operator
\[
M_{\overline a,t}R_zM_{b,t}
\]
is Hilbert--Schmidt on \(L^2(0,\infty)\), and
\begin{equation}
\label{A7-HS-mixed}
\begin{aligned}
\|M_{\overline a,t}R_zM_{b,t}\|_{\mathrm{HS}}^2
&=
\int_0^\infty\int_\lambda^\infty
|a(\lambda)|^2 e^{-2y(s-\lambda)}|b(s)|^2\,ds\,d\lambda  \\
&\le
\|a\|_{L^2}^2\|b\|_{L^2}^2 .
\end{aligned}
\end{equation}
In particular,
\[
\|M_{\overline a,t}R_zM_{b,t}\|_{\mathcal L(L^2,L^2)}
\le
\|a\|_{L^2}\|b\|_{L^2}.
\]
For \(w\in L^2(0,\infty)\), define
\[
\Theta_w(y)^2
:=
\int_0^\infty\int_\lambda^\infty
|w(\lambda)|^2 e^{-2y(s-\lambda)}|w(s)|^2\,ds\,d\lambda .
\]
Then
\[
\Theta_w(y)\to0
\qquad\text{as }y\to+\infty.
\]
Indeed, the integrand is dominated by
\[
|w(\lambda)|^2|w(s)|^2\mathbf 1_{\{s>\lambda\}},
\]
which is integrable on \((0,\infty)^2\), and it converges pointwisely to \(0\) as
\(y\to+\infty\).

Since \(\mathcal C_H\) is bounded on \(L^2(0,\infty)\), \eqref{A7-HS-mixed}
implies
\begin{equation}
\label{A7-Gamma-HS-bound}
\|\Gamma_w(t,z)\|_{\mathcal L(L^2,L^2)}
\le
C\Theta_w(\Im z).
\end{equation}
Moreover, if \(w_n\to w\) in \(L^2(0,\infty)\), then for each fixed
\(z\in\mathbb C_+\),
\begin{equation}
\label{A7-Gamma-L2-continuity}
\Gamma_{w_n}(t,z)\to\Gamma_w(t,z)
\quad\text{in }\mathcal L(L^2,L^2),
\end{equation}
uniformly in \(t\) and \(\Re z\). Indeed,
\[
\Gamma_{w_n}-\Gamma_w
=
\mathcal C_HM_{\overline{w_n-w},t}R_zM_{w_n,t}
+
\mathcal C_HM_{\overline w,t}R_zM_{w_n-w,t},
\]
and \eqref{A7-HS-mixed} gives
\[
\|\Gamma_{w_n}-\Gamma_w\|_{\mathcal L(L^2,L^2)}
\le
C\|w_n-w\|_{L^2}
\bigl(\|w_n\|_{L^2}+\|w\|_{L^2}\bigr).
\]

\medskip
\noindent
\textbf{Step 2: Large-\(\Im z\) construction for \(L^1\)-inputs.}
Choose \(b>0\) so large that
\begin{equation}
\label{A7-large-im-smallness}
\|\Gamma_{\mathfrak u}(t,z)\|_{\mathcal L(L^2,L^2)}
\le
\frac14
\qquad
\text{whenever }\Im z\ge b.
\end{equation}
The choice of \(b\) is independent of \(t\) and \(\Re z\). If
\[
\mathfrak u_n\to\mathfrak u
\quad\text{in }L^2(0,\infty),
\]
then, after possibly increasing \(n\), the same estimate with \(1/2\) in place
of \(1/4\) holds for \(\mathfrak u_n\), uniformly for \(\Im z\ge b\).

For \(\Im z\ge b\) and \(f\in L^1(0,\infty)\), define
\begin{equation}
\label{A7-A1-large-def}
A^{(1)}(\mathfrak u,t,z)f
:=
R_zf
-
R_zM_{\mathfrak u,t}
\bigl(I+\Gamma_{\mathfrak u}(t,z)\bigr)^{-1}
\mathcal C_HM_{\overline{\mathfrak u},t}R_zf .
\end{equation}
Then
\[
A^{(1)}(\mathfrak u,t,z):L^1(0,\infty)\to L^2(0,\infty)
\]
is bounded. Indeed, by \eqref{A7-Rz-basic-bounds},
\[
\|R_zf\|_{L^2}\le (2\Im z)^{-1/2}\|f\|_{L^1},
\]
and
\[
\|M_{\overline{\mathfrak u},t}R_zf\|_{L^2}
\le
\|\mathfrak u\|_{L^2}\|R_zf\|_{L^\infty}
\le
\|\mathfrak u\|_{L^2}\|f\|_{L^1}.
\]
Thus
\[
h:=
\bigl(I+\Gamma_{\mathfrak u}(t,z)\bigr)^{-1}
\mathcal C_HM_{\overline{\mathfrak u},t}R_zf
\]
belongs to \(L^2\), with
\[
\|h\|_{L^2}\le C_{\mathfrak u}\|f\|_{L^1}.
\]
Since
\[
\|M_{\mathfrak u,t}h\|_{L^1}
\le
\|\mathfrak u\|_{L^2}\|h\|_{L^2},
\]
another use of \eqref{A7-Rz-basic-bounds} gives
\begin{equation}
\label{A7-A1-large-bound}
\|A^{(1)}(\mathfrak u,t,z)f\|_{L^2}
\le
C_{\mathfrak u,b}\|f\|_{L^1},
\qquad \Im z\ge b.
\end{equation}

For \(\Im z\ge b\), define the large-\(\Im z\) boundary functional by
\begin{equation}
\label{A7-J1-large-def}
\mathcal J^{(1)}_{\mathfrak u}(t,z)[f]
:=
J_+[R_zf]
-
J_+\left[
R_zM_{\mathfrak u,t}
\bigl(I+\Gamma_{\mathfrak u}(t,z)\bigr)^{-1}
\mathcal C_HM_{\overline{\mathfrak u},t}R_zf
\right].
\end{equation}
The first term is
\[
J_+[R_zf]
=
i\int_0^\infty e^{iz\lambda}f(\lambda)\,d\lambda,
\]
and hence
\begin{equation}
\label{A7-free-boundary-L1}
|J_+[R_zf]|
\le
\|f\|_{L^1}.
\end{equation}
For the second term, if \(h\in L^2(0,\infty)\), then
\begin{equation}
\label{A7-JRMu-bound}
|J_+[R_zM_{\mathfrak u,t}h]|
\le
\|M_{\mathfrak u,t}h\|_{L^1}
\le
\|\mathfrak u\|_{L^2}\|h\|_{L^2}.
\end{equation}
Therefore
\begin{equation}
\label{A7-J1-large-bound}
|\mathcal J^{(1)}_{\mathfrak u}(t,z)[f]|
\le
C_{\mathfrak u,b}\|f\|_{L^1},
\qquad \Im z\ge b.
\end{equation}
The constants above are independent of \(t\) and \(\Re z\).

For smooth \(w\in\mathcal S_*\), the Woodbury formula gives
\[
A(w,t,z)f
=
R_zf
-
R_zM_{w,t}
\bigl(I+\Gamma_w(t,z)\bigr)^{-1}
\mathcal C_HM_{\overline w,t}R_zf
\]
for \(f\in L^1(0,\infty)\cap L^2(0,\infty)\). Hence for \(f\in L^1(0,\infty)\cap L^2(0,\infty)\) and $\Im z \ge b$, 
\[
J_+[A(w,t,z)f]
=
\mathcal J^{(1)}_w(t,z)[f].
\]

\medskip
\noindent
\textbf{Step 3: The auxiliary \(L^2\)-input boundary functionals.}
For \(\Im z\ge b\) and \(g\in L^2(0,\infty)\), define
\begin{equation}
\label{A7-A2-large-def}
A^{(2)}(\mathfrak u,t,z)g
:=
R_zg
-
R_zM_{\mathfrak u,t}
\bigl(I+\Gamma_{\mathfrak u}(t,z)\bigr)^{-1}
\mathcal C_HM_{\overline{\mathfrak u},t}R_zg ,
\end{equation}
and
\begin{equation}
\label{A7-J2-large-def}
\mathcal J^{(2)}_{\mathfrak u}(t,z)[g]
:=
J_+[R_zg]
-
J_+\left[
R_zM_{\mathfrak u,t}
\bigl(I+\Gamma_{\mathfrak u}(t,z)\bigr)^{-1}
\mathcal C_HM_{\overline{\mathfrak u},t}R_zg
\right].
\end{equation}
By \eqref{A7-Rz-L2-bounds},
\[
\|M_{\overline{\mathfrak u},t}R_zg\|_{L^2}
\le
(2\Im z)^{-1/2}\|\mathfrak u\|_{L^2}\|g\|_{L^2},
\]
and hence
\[
A^{(2)}(\mathfrak u,t,z):L^2(0,\infty)\to L^2(0,\infty)
\]
is bounded for \(\Im z\ge b\). Moreover,
\[
|J_+[R_zg]|
\le
\left(\int_0^\infty e^{-2(\Im z)\lambda}\,d\lambda\right)^{1/2}
\|g\|_{L^2}
=
(2\Im z)^{-1/2}\|g\|_{L^2},
\]
and the second term in \(\mathcal J^{(2)}_{\mathfrak u}\) is controlled by
\eqref{A7-JRMu-bound}. Thus
\[
\mathcal J^{(2)}_{\mathfrak u}(t,z)\in (L^2(0,\infty))^*
\]
for \(\Im z\ge b\).

For smooth \(w\in\mathcal S_*\), the smooth theory gives that
\[
A(w,t,z)
=
\left(
i\partial_\lambda+
e^{-it\lambda^2}B(w)e^{it\lambda^2}
-z
\right)^{-1}
\]
is the resolvent of a maximally dissipative operator. Hence
\begin{equation}
\label{A7-smooth-resolvent-bound}
\|A(w,t,z)\|_{\mathcal L(L^2,L^2)}
\le
\frac1{\Im z},
\qquad z\in\mathbb C_+,
\end{equation}
and the resolvent identity holds:
\begin{equation}
\label{A7-smooth-resolvent-identity}
A(w,t,z)
=
A(w,t,z_0)
\bigl[I-(z-z_0)A(w,t,z_0)\bigr]^{-1}
\end{equation}
In particular, if
\[
|z-z_0|<\frac12\Im z_0,
\]
then the inverse in \eqref{A7-smooth-resolvent-identity} is uniformly bounded.

In the large-\(\Im z\) region, the explicit formulae
\eqref{A7-A2-large-def} and \eqref{A7-J2-large-def}, together with
\eqref{A7-Gamma-L2-continuity}, imply that if
\[
w_n\to\mathfrak u
\qquad\text{in }L^2(0,\infty),
\]
then
\begin{equation}
\label{A7-A2-large-conv}
A(w_n,t,z)\to A^{(2)}(\mathfrak u,t,z)
\quad\text{in }\mathcal L(L^2,L^2),
\qquad \Im z\ge b,
\end{equation}
and
\begin{equation}
\label{A7-J2-large-conv}
J_+[A(w_n,t,z)\,\cdot]
\to
\mathcal J^{(2)}_{\mathfrak u}(t,z)
\quad\text{in }(L^2(0,\infty))^*,
\qquad \Im z\ge b.
\end{equation}

We now continue the \(L^2\)-input objects to all of \(\mathbb C_+\). Suppose that
at some \(z_0\in\mathbb C_+\) we have already constructed
\(A^{(2)}(\mathfrak u,t,z_0)\) and
\(\mathcal J^{(2)}_{\mathfrak u}(t,z_0)\), and that for every smooth
approximating sequence \(w_n\to\mathfrak u\) in \(L^2\),
\[
A(w_n,t,z_0)\to A^{(2)}(\mathfrak u,t,z_0)
\quad\text{in }\mathcal L(L^2,L^2),
\]
and
\[
J_+[A(w_n,t,z_0)\,\cdot]
\to
\mathcal J^{(2)}_{\mathfrak u}(t,z_0)
\quad\text{in }(L^2(0,\infty))^*.
\]
Since \eqref{A7-smooth-resolvent-bound} passes to the limit, we have
\[
\|A^{(2)}(\mathfrak u,t,z_0)\|_{\mathcal L(L^2,L^2)}
\le
\frac1{\Im z_0}.
\]
Thus, if
\[
|z-z_0|<\frac12\Im z_0,
\]
we may define
\begin{equation}
\label{A7-A2-continuation}
A^{(2)}(\mathfrak u,t,z)
:=
A^{(2)}(\mathfrak u,t,z_0)
\bigl[
I-(z-z_0)A^{(2)}(\mathfrak u,t,z_0)
\bigr]^{-1},
\end{equation}
and
\begin{equation}
\label{A7-J2-continuation}
\mathcal J^{(2)}_{\mathfrak u}(t,z)[g]
:=
\mathcal J^{(2)}_{\mathfrak u}(t,z_0)
\left[
\bigl(
I-(z-z_0)A^{(2)}(\mathfrak u,t,z_0)
\bigr)^{-1}g
\right].
\end{equation}
The smooth resolvent identity \eqref{A7-smooth-resolvent-identity} and the
convergence at \(z_0\) imply the corresponding convergence at \(z\). Therefore
the definition can be propagated along any finite chain of such discs.

Starting from the large-\(\Im z\) region and using a finite chain of discs, we
obtain
\[
\mathcal J^{(2)}_{\mathfrak u}(t,z)\in (L^2(0,\infty))^*
\]
for every \(z\in\mathbb C_+\). The definition is independent of the approximating
sequence and of the chain, because it is obtained as the norm limit of the smooth
functionals
\[
J_+[A(w_n,t,z)\,\cdot],
\]
and this limit is unique.

Moreover, for every \(\gamma>0\), the same finite-step continuation argument yields
\begin{equation}
\label{A7-J2-uniform-bound}
|\mathcal J^{(2)}_{\mathfrak u}(t,z)[g]|
\le
C^{(2)}_{\mathfrak u,\gamma}\|g\|_{L^2}
\end{equation}
for all \(t\in\mathbb R\), all \(z\in\mathbb C_+\) with
\(\Im z\ge\gamma\), and all \(g\in L^2(0,\infty)\). The constant is independent
of \(t\) and \(\Re z\).

\medskip
\noindent
\textbf{Step 4: Extension of the \(L^1\)-input boundary functional to all \(z\).}
Fix \(\gamma>0\). Choose \(Y\ge b\), and let
\[
z=x+iy\in\mathbb C_+,
\qquad y\ge\gamma.
\]
If \(y\ge Y\), we use the large-\(\Im z\) definition
\eqref{A7-J1-large-def}. If \(\gamma\le y<Y\), set
\[
z_Y:=x+iY.
\]
For \(f\in L^1(0,\infty)\), define
\begin{equation}
\label{A7-J1-lower-def}
\mathcal J_{\mathfrak u}(t,z)[f]
:=
\mathcal J^{(1)}_{\mathfrak u}(t,z_Y)[f]
+
(z-z_Y)
\mathcal J^{(2)}_{\mathfrak u}(t,z)
\left[
A^{(1)}(\mathfrak u,t,z_Y)f
\right].
\end{equation}
This is meaningful because
\[
A^{(1)}(\mathfrak u,t,z_Y)f\in L^2(0,\infty)
\]
by \eqref{A7-A1-large-bound}, and
\(\mathcal J^{(2)}_{\mathfrak u}(t,z)\) is bounded on \(L^2\) by
\eqref{A7-J2-uniform-bound}. Combining \eqref{A7-J1-large-bound},
\eqref{A7-A1-large-bound}, and \eqref{A7-J2-uniform-bound}, we obtain
\[
|\mathcal J_{\mathfrak u}(t,z)[f]|
\le
C_{\mathfrak u,\gamma}\|f\|_{L^1},
\]
uniformly for \(t\in\mathbb R\), \(\Re z\in\mathbb R\), and \(\Im z\ge\gamma\).
This proves \eqref{A7-boundary-L1-bound}.

The definition is independent of the auxiliary height \(Y\). Indeed, if
\(f\in L^1\cap L^2\) and \(w\in\mathcal S_*\), then the smooth resolvent identity
gives
\[
A(w,t,z)
=
A(w,t,z_Y)
+
(z-z_Y)A(w,t,z)A(w,t,z_Y),
\]
and therefore \eqref{A7-J1-lower-def} agrees with the usual smooth boundary
value
\[
J_+[A(w,t,z)f].
\]
Passing to smooth approximations \(w_n\to\mathfrak u\) in \(L^2\), we obtain the
same consistency for rough \(\mathfrak u\) and \(f\in L^1\cap L^2\). Since
\(L^1\cap L^2\) is dense in \(L^1\), and both possible definitions are bounded
\(L^1\)-functionals with the same type of bound, the definition is independent
of \(Y\) for all \(f\in L^1\).

\medskip
\noindent
\textbf{Step 5: Stability under approximation.}
Let
\[
\mathfrak u_n\in\mathcal S_*,
\qquad
\mathfrak u_n\to\mathfrak u
\quad\text{in }L^2(0,\infty),
\]
and let
\[
f_n\in L^1(0,\infty)\cap L^2(0,\infty),
\qquad
f_n\to f
\quad\text{in }L^1(0,\infty).
\]
Fix \(t\in\mathbb R\) and \(z=x+iy\in\mathbb C_+\). Choose \(Y\ge b\) so large
that the large-\(\Im z\) construction is valid for \(\mathfrak u\) and for all
sufficiently large \(n\), and set
\[
z_Y:=x+iY.
\]

In the large-\(\Im z\) region, the explicit formulae
\eqref{A7-A1-large-def} and \eqref{A7-J1-large-def}, together with
\[
\mathfrak u_n\to\mathfrak u
\quad\text{in }L^2,
\qquad
f_n\to f
\quad\text{in }L^1,
\]
give
\begin{equation}
\label{A7-A1-approx-conv}
A^{(1)}(\mathfrak u_n,t,z_Y)f_n
\longrightarrow
A^{(1)}(\mathfrak u,t,z_Y)f
\qquad\text{in }L^2(0,\infty),
\end{equation}
and
\begin{equation}
\label{A7-J1-approx-large}
\mathcal J^{(1)}_{\mathfrak u_n}(t,z_Y)[f_n]
\longrightarrow
\mathcal J^{(1)}_{\mathfrak u}(t,z_Y)[f].
\end{equation}
For instance,
\[
\|M_{\overline{\mathfrak u_n},t}R_{z_Y}f_n
-
M_{\overline{\mathfrak u},t}R_{z_Y}f\|_{L^2}
\le
\|\mathfrak u_n-\mathfrak u\|_{L^2}\|R_{z_Y}f\|_{L^\infty}
+
\|\mathfrak u_n\|_{L^2}\|R_{z_Y}(f_n-f)\|_{L^\infty},
\]
and the right-hand side tends to zero. The remaining terms are handled in the
same way, using \eqref{A7-Gamma-L2-continuity} and the convergence of the
inverses.

By Step 3,
\[
\mathcal J^{(2)}_{\mathfrak u_n}(t,z)
\to
\mathcal J^{(2)}_{\mathfrak u}(t,z)
\]
in the operator norm of functionals on \(L^2(0,\infty)\), and these functionals
are uniformly bounded. Combining this with \eqref{A7-A1-approx-conv}, we get
\[
\mathcal J^{(2)}_{\mathfrak u_n}(t,z)
\left[
A^{(1)}(\mathfrak u_n,t,z_Y)f_n
\right]
\to
\mathcal J^{(2)}_{\mathfrak u}(t,z)
\left[
A^{(1)}(\mathfrak u,t,z_Y)f
\right].
\]
Using \eqref{A7-J1-lower-def} for \(\mathfrak u_n\) and for \(\mathfrak u\), and
then using \eqref{A7-J1-approx-large}, we obtain
\[
\mathcal J_{\mathfrak u_n}(t,z)[f_n]
\to
\mathcal J_{\mathfrak u}(t,z)[f].
\]
For smooth \(\mathfrak u_n\), the left-hand side is exactly
\[
J_+\bigl[A(\mathfrak u_n,t,z)f_n\bigr].
\]
Thus \eqref{A7-boundary-approx-conv} follows, and the proof is complete.
\end{proof}

The last auxiliary estimate shows that the defect contribution produced by the rough
boundary functional is the Hardy extension of an \(L^2_+\)-function. This is the input
used in the final approximation step of the proof of Theorem \ref{theorem 1.5}.
\begin{lemma}
\label{lemma a.8}
Let
\[
u_0\in L^2_+(\mathbb R),
\]
and set
\[
\mathfrak u_0:=\widetilde u_0:=\mathcal F_{d,u_0}u_0 .
\]
Then
\[
\mathfrak u_0\in L^2(0,\infty)
\]
by Proposition \(\ref{prop 2.6}\). For \(f\in L^2(0,\infty)\), define
\begin{equation}
\label{A.8.1}
G_f(t,z)
:=
-\frac{1}{2i\pi}
J_+\left[
A(\mathfrak u_0,t,z)
e^{-it\lambda^2}
B(\mathfrak u_0)
e^{it\lambda^2}
(i\partial_\lambda-z)^{-1}
e^{-it\lambda^2}f
\right],
\qquad z\in\mathbb C_+ .
\end{equation}
Here the boundary value
\[
J_+\bigl[A(\mathfrak u_0,t,z)h\bigr],
\qquad h\in L^1(0,\infty),
\]
is understood in the sense of Lemma \(\ref{coro a.7}\).

Then there exists a unique function
\[
g_f(t)\in L^2_+(\mathbb R)
\]
such that \(G_f(t,\cdot)\) is the Hardy extension of \(g_f(t)\), namely
\begin{equation}
\label{A.8.2}
G_f(t,z)
=
\frac1{2\pi}
\int_0^\infty
e^{iz\xi}\widehat{g_f(t)}(\xi)\,d\xi,
\qquad z\in\mathbb C_+ .
\end{equation}
Moreover,
\begin{equation}
\label{A.8.3}
\|g_f(t)\|_{L^2(\mathbb R)}
\le
\|f\|_{L^2(0,\infty)}
\qquad\text{for every }t\in\mathbb R.
\end{equation}
\end{lemma}

\begin{proof}
We divide the proof into several steps.

\medskip
\noindent
\textbf{Step 1: The smooth case.}
Assume first that
\[
u_0\in\mathcal S_+(\mathbb R).
\]
Then
\[
\mathfrak u_0=\mathcal F_{d,u_0}u_0\in\mathcal{S}_{*}:=\{\varphi=g|_{(0,\infty)}:\ g\in \mathcal{S}(\mathbb R),\
\operatorname{supp}g\subset[0,\infty)\}.
\]
For \(f\in L^2(0,\infty)\), define
\[
v_f:=\mathcal F_{d,u_0}^{-1}f\in L^2_+(\mathbb R).
\]
We first introduce two auxiliary functions. Define
\begin{equation}
\label{A.8.4}
H_f(t,z)
:=
\frac{1}{2i\pi}
I_+\left[
\left(X^*+2tL_{u_0}-z\operatorname{Id}\right)^{-1}v_f
\right],
\qquad z\in\mathbb C_+,
\end{equation}
and
\begin{equation}
\label{A.8.5}
S_f(t,z)
:=
e^{it\partial_x^2}\mathcal F^{-1}f(z)
=
\frac{1}{2\pi}
\int_0^\infty e^{-it\lambda^2+iz\lambda}f(\lambda)\,d\lambda .
\end{equation}
Lemma \ref{lemma:lax-resolvent-hardy-bound} implies that \(H_f(t,\cdot)\) is the Hardy extension of a unique
function
\[
h_f(t)\in L^2_+(\mathbb R),
\]
and
\begin{equation}
\label{A.8.6}
\|h_f(t)\|_{L^2(\mathbb R)}
=
\|v_f\|_{L^2(\mathbb R)}.
\end{equation}
By the \(L^2\)-isometry of the distorted Fourier transform,
\begin{equation}
\label{A.8.7}
\|v_f\|_{L^2(\mathbb R)}
=
\frac{1}{\sqrt{2\pi}}\|f\|_{L^2(0,\infty)}.
\end{equation}
Similarly, \(S_f(t,\cdot)\) is the Hardy extension of
\[
s_f(t):=e^{it\partial_x^2}\mathcal F^{-1}f\in L^2_+(\mathbb R),
\]
and Plancherel's theorem for the ordinary Fourier transform gives
\begin{equation}
\label{A.8.8}
\|s_f(t)\|_{L^2(\mathbb R)}
=
\frac{1}{\sqrt{2\pi}}\|f\|_{L^2(0,\infty)}.
\end{equation}
For \(f\in\mathcal S_*\), we claim that
\begin{equation}
\label{A.8.9}
G_f(t,z)
=
H_f(t,z)-S_f(t,z),
\qquad z\in\mathbb C_+.
\end{equation}
Indeed, Proposition \ref{prop:Xstar_under_Fd} gives
\[
\mathcal F_{d,u_0}
\left(X^*+2tL_{u_0}\right)
\mathcal F_{d,u_0}^{-1}
=
i\partial_\lambda+B(\mathfrak u_0)+2t\lambda .
\]
Therefore, by the second resolvent identity,
\[
\begin{aligned}
&\left(i\partial_\lambda+B(\mathfrak u_0)+2t\lambda-z\right)^{-1}
-
\left(i\partial_\lambda+2t\lambda-z\right)^{-1}  \\
&\quad =
-
\left(i\partial_\lambda+B(\mathfrak u_0)+2t\lambda-z\right)^{-1}
B(\mathfrak u_0)
\left(i\partial_\lambda+2t\lambda-z\right)^{-1}.
\end{aligned}
\]
After conjugating by \(e^{it\lambda^2}\), the last term becomes exactly the expression in
\eqref{A.8.1}. The free term is precisely \eqref{A.8.5}. Hence \eqref{A.8.9} holds for
\(f\in\mathcal S_*\).

We now extend \eqref{A.8.9} to every \(f\in L^2(0,\infty)\). Let
\[
f_j\in\mathcal S_*,
\qquad
f_j\to f
\quad\text{in }L^2(0,\infty).
\]
By \eqref{A.8.6} and \eqref{A.8.7}, applied to \(f_j-f\), we have
\begin{equation}
\label{A.8.10}
h_{f_j}(t)\to h_f(t)
\qquad\text{in }L^2_+(\mathbb R).
\end{equation}
By \eqref{A.8.8},
\begin{equation}
\label{A.8.11}
s_{f_j}(t)\to s_f(t)
\qquad\text{in }L^2_+(\mathbb R).
\end{equation}
For each fixed \(z\in\mathbb C_+\), the point-evaluation functional
\[
g\mapsto
\frac1{2\pi}\int_0^\infty e^{iz\xi}\widehat g(\xi)\,d\xi
\]
is continuous on \(L^2_+(\mathbb R)\). Hence \eqref{A.8.10} and \eqref{A.8.11} imply
\[
H_{f_j}(t,z)-S_{f_j}(t,z)
\longrightarrow
H_f(t,z)-S_f(t,z)
\qquad\text{for every fixed }z\in\mathbb C_+.
\]
On the other hand, the free resolvent estimate
\[
\|(i\partial_\lambda-z)^{-1}g\|_{L^\infty}
\le
C_z\|g\|_{L^2}
\]
gives
\[
e^{it\lambda^2}(i\partial_\lambda-z)^{-1}e^{-it\lambda^2}f_j
\to
e^{it\lambda^2}(i\partial_\lambda-z)^{-1}e^{-it\lambda^2}f
\qquad\text{in }L^\infty(0,\infty).
\]
Since
\[
\|B(\mathfrak u_0)r\|_{L^1}
\le
C\|\mathfrak u_0\|_{L^2}^2\|r\|_{L^\infty},
\]
the corresponding \(L^1\)-inputs in \eqref{A.8.1} converge in \(L^1(0,\infty)\). By
Lemma \ref{coro a.7},
\[
G_{f_j}(t,z)\to G_f(t,z)
\qquad\text{for every fixed }z\in\mathbb C_+.
\]
Passing to the limit in \eqref{A.8.9}, we obtain
\begin{equation}
\label{A.8.12}
G_f(t,z)
=
H_f(t,z)-S_f(t,z),
\qquad z\in\mathbb C_+,
\end{equation}
for every \(f\in L^2(0,\infty)\).

Define
\begin{equation}
\label{A.8.13}
g_f(t):=h_f(t)-s_f(t)\in L^2_+(\mathbb R).
\end{equation}
Then \eqref{A.8.12} says precisely that \(G_f(t,\cdot)\) is the Hardy extension of
\(g_f(t)\). Moreover, by \eqref{A.8.6}, \eqref{A.8.7}, and \eqref{A.8.8},
\begin{equation}
\label{A.8.14}
\begin{aligned}
\|g_f(t)\|_{L^2(\mathbb R)}
&\le
\|h_f(t)\|_{L^2(\mathbb R)}
+
\|s_f(t)\|_{L^2(\mathbb R)}  \\
&\le
\sqrt{\frac{2}{\pi}}\,
\|f\|_{L^2(0,\infty)}
\le
\|f\|_{L^2(0,\infty)}.
\end{aligned}
\end{equation}
This proves the lemma in the smooth case.

\medskip
\noindent
\textbf{Step 2: Approximation of the potential.}
We now consider
\[
u_0\in L^2_+(\mathbb R).
\]
Choose
\[
u_n\in\mathcal S_+(\mathbb R)
\]
such that
\[
u_n\to u_0
\qquad\text{in }L^2_+(\mathbb R).
\]
Set
\[
\mathfrak u_n:=\mathcal F_{d,u_n}u_n,
\qquad
\mathfrak u_0:=\mathcal F_{d,u_0}u_0.
\]
By Proposition \(\ref{proposition 3.6}\), since \(u_n\in\mathcal S_+(\mathbb R)\),
we have
\[
\mathfrak u_n\in\mathcal S_*.
\]
By Proposition we infer
\begin{equation}
\label{A.8.15}
\mathfrak u_n\to\mathfrak u_0
\qquad\text{in }L^2(0,\infty).
\end{equation}
For each \(n\), define
\begin{equation}
\label{A.8.16}
G_{n,f}(t,z)
:=
-\frac{1}{2i\pi}
J_+\left[
A(\mathfrak u_n,t,z)
e^{-it\lambda^2}
B(\mathfrak u_n)
e^{it\lambda^2}
(i\partial_\lambda-z)^{-1}
e^{-it\lambda^2}f
\right].
\end{equation}
By the smooth case, there exists
\[
g_{n,f}(t)\in L^2_+(\mathbb R)
\]
whose Hardy extension is \(G_{n,f}(t,\cdot)\), and
\begin{equation}
\label{A.8.17}
\|g_{n,f}(t)\|_{L^2(\mathbb R)}
\le
\|f\|_{L^2(0,\infty)}
\qquad\text{for every }n.
\end{equation}
We next prove pointwise convergence. Fix \(z\in\mathbb C_+\), and define
\[
R_z:=(i\partial_\lambda-z)^{-1},
\qquad
r_z:=e^{it\lambda^2}R_ze^{-it\lambda^2}f.
\]
By Proposition \ref{prop 2},
\begin{equation}
\label{A.8.18}
\|r_z\|_{L^\infty(0,\infty)}
\le
C_z\|f\|_{L^2(0,\infty)}.
\end{equation}
For \(v\in L^2(0,\infty)\) and \(r\in L^\infty(0,\infty)\), the expression
\[
B(v)r
=
-\frac1{4\pi}
\left(
ivH(\overline v\,r)-|v|^2r
\right)
\]
belongs to \(L^1(0,\infty)\), and
\begin{equation}
\label{A.8.19}
\|B(v)r\|_{L^1}
\le
C\|v\|_{L^2}^2\|r\|_{L^\infty}.
\end{equation}
Moreover, if \(v_n\to v\) in \(L^2(0,\infty)\), then
\begin{equation}
\label{A.8.20}
\|B(v_n)r-B(v)r\|_{L^1}
\le
C
\|v_n-v\|_{L^2}
\bigl(\|v_n\|_{L^2}+\|v\|_{L^2}\bigr)
\|r\|_{L^\infty}.
\end{equation}
Indeed, the Hilbert-transform part is estimated by
\[
\begin{aligned}
&\|v_nH(\overline{v_n}r)-vH(\overline v r)\|_{L^1} \\
&\quad\le
\|(v_n-v)H(\overline{v_n}r)\|_{L^1}
+
\|vH((\overline{v_n}-\overline v)r)\|_{L^1} \\
&\quad\le
C\|v_n-v\|_{L^2}
\bigl(\|v_n\|_{L^2}+\|v\|_{L^2}\bigr)
\|r\|_{L^\infty},
\end{aligned}
\]
and the multiplication term is estimated similarly.

Applying \eqref{A.8.20} with
\[
v_n=\mathfrak u_n,
\qquad
v=\mathfrak u_0,
\qquad
r=r_z,
\]
we obtain
\begin{equation}
\label{A.8.21}
e^{-it\lambda^2}B(\mathfrak u_n)r_z
\longrightarrow
e^{-it\lambda^2}B(\mathfrak u_0)r_z
\qquad\text{in }L^1(0,\infty).
\end{equation}
By Lemma \ref{coro a.7}, applied with
\[
h_n:=e^{-it\lambda^2}B(\mathfrak u_n)r_z,
\qquad
h:=e^{-it\lambda^2}B(\mathfrak u_0)r_z,
\]
and using \eqref{A.8.15} and \eqref{A.8.21}, we get
\begin{equation}
\label{A.8.22}
G_{n,f}(t,z)\to G_f(t,z)
\qquad\text{for every fixed }z\in\mathbb C_+.
\end{equation}

\medskip
\noindent
\textbf{Step 3: Passage to the \(L^2_+\)-limit.}
The sequence \((g_{n,f}(t))_{n\ge1}\) is bounded in the Hilbert space
\(L^2_+(\mathbb R)\) by \eqref{A.8.17}. Hence every subsequence has a weakly convergent
subsequence in \(L^2_+(\mathbb R)\). Let
\[
g_{n_j,f}(t)\rightharpoonup g_f(t)
\qquad\text{weakly in }L^2_+(\mathbb R).
\]
For each fixed \(z\in\mathbb C_+\), the point-evaluation functional
\[
g\mapsto
\frac1{2\pi}
\int_0^\infty e^{iz\xi}\widehat g(\xi)\,d\xi
\]
is continuous on \(L^2_+(\mathbb R)\). Therefore the Hardy extensions of
\(g_{n_j,f}(t)\) converge pointwise to the Hardy extension of \(g_f(t)\). Since those
Hardy extensions are precisely \(G_{n_j,f}(t,\cdot)\), the pointwise convergence
\eqref{A.8.22} shows that
\[
G_f(t,z)
=
\frac1{2\pi}
\int_0^\infty e^{iz\xi}\widehat{g_f(t)}(\xi)\,d\xi,
\qquad z\in\mathbb C_+.
\]
Thus \(G_f(t,\cdot)\) is the Hardy extension of \(g_f(t)\). 

Finally, by weak lower semicontinuity and \eqref{A.8.17},
\[
\|g_f(t)\|_{L^2(\mathbb R)}
\le
\liminf_{j\to\infty}
\|g_{n_j,f}(t)\|_{L^2(\mathbb R)}
\le
\|f\|_{L^2(0,\infty)}.
\]
This proves \eqref{A.8.3}, and the proof is complete.
\end{proof}
\end{appendix}

\end{document}